\documentclass[10pt,reqno]{amsart}
\usepackage{amsmath,amscd,amssymb}
\usepackage{url}
\usepackage{color}

\theoremstyle{plain}
   \newtheorem{theorem}{Theorem}[section]
   \newtheorem{proposition}[theorem]{Proposition}
   \newtheorem{lemma}[theorem]{Lemma}
   \newtheorem{corollary}[theorem]{Corollary}
   \newtheorem{conjecture}[theorem]{Conjecture}
   
\theoremstyle{definition}

   \newtheorem{example}{Example}[section] 
\theoremstyle{remark}
 \newtheorem{remark}{Remark}[section]

%
%
%

\numberwithin{equation}{section}
\newcommand{\R}{\mathbb{R}}
\newcommand{\C}{\mathbb{C}}
\newcommand{\Z}{\mathbb{Z}}
\newcommand{\B}{\mathcal{B}}

\newcommand{\E}{\mathcal{E}}
\newcommand{\A}{\mathcal{A}}
\newcommand{\PP}{\mathcal{P}}
\newcommand{\Zone}{\mathcal{Z}}
\newcommand{\D}{\mathcal{D}}
\newcommand{\HH}{\mathcal{H}}
\newcommand{\I}{\mathcal{I}}
\newcommand{\RR}{\mathcal{R}}

\def\newop#1{\expandafter\def\csname #1\endcsname{\mathop{\rm
#1}\nolimits}}

\newop{vol}
\newop{Vol}
\newop{conv}
\newop{Supp}
\newop{supp}
\newop{aw-height}
\newop{Nasc}
\newop{nasc}
\newop{comp}
\newop{Des}
\newop{des}
\newop{maxDes}
\newop{awheight}
\newop{lcm}
\newop{red}
\newop{sd}
\newop{exc}
\newop{axc}
\newop{Re}
\newop{Im}
\newop{IP}


\begin{document}

\title{Symmetric decompositions and real-rootedness}

\author{Petter Br\"and\'en}
\date{\today}
\address{Matematik, KTH, SE-100 44 Stockholm, Sweden}
\email{pbranden@kth.se}

\author{Liam Solus}
\date{\today}
\address{Matematik, KTH, SE-100 44 Stockholm, Sweden}
\email{solus@kth.se}

\begin{abstract}
In algebraic, topological, and geometric combinatorics inequalities among the coefficients of combinatorial polynomials are frequently studied.  
Recently a notion called the alternatingly increasing property, which is stronger than unimodality, was introduced. 
In this paper, we relate the alternatingly increasing property to real-rootedness of the symmetric decomposition of a polynomial to develop a systematic approach for proving 
the alternatingly increasing property for several classes of polynomials. 

We apply our results to strengthen and generalize real-rootedness, unimodality, and alternatingly increasing results pertaining to colored Eulerian and derangement polynomials, Ehrhart $h^\ast$-polynomials for lattice zonotopes, $h$-polynomials of barycentric subdivisions of doubly Cohen-Macaulay level simplicial complexes, and certain local $h$-polynomials for subdivisions of simplices. 
In particular, we prove two conjectures of Athanasiadis.
\end{abstract}

\maketitle
\thispagestyle{empty}

\section{Introduction}
\label{sec: introduction}
A long-standing endeavor in algebraic, topological, and geometric  combinatorics is to understand the distributional properties of sequences $p_0,\ldots,p_d$ of nonnegative numbers carrying combinatorial, algebraic, topological and/or geometric information, see e.g. \cite{B16,B15,B89,B94b,S89}. Such properties are frequently phrased equivalently in terms of its \emph{generating polynomial} $p :=p_0+p_1x+\cdots+p_dx^d$.
The generating polynomial $p$ is called \emph{unimodal} if there exists an index $t\in[d]:=\{1,\ldots,d\}$ such that
$
p_0\leq p_1\leq \cdots \leq p_t\geq \cdots \geq p_{d-1}\geq p_d,  
$
and it is called \emph{symmetric} (with respect to $d$) if $p_k = p_{d-k}$ for all indices $k=0,1,\ldots,d$.  
We say that $p$ is \emph{real-rooted} if all of its zeros are real numbers, or $p \equiv 0$. 
If $p$ is a real-rooted polynomial with only nonnegative coefficients then it is log-concave and unimodal \cite[Theorem 1.2.1]{B89}.  

If the coefficients of $p$ satisfy
\[
0\leq p_0\leq p_d\leq p_1\leq p_{d-1}\leq \cdots\leq p_{\left\lfloor\frac{d+1}{2}\right\rfloor},
\]
we say that $p$ is \emph{alternatingly increasing}. 
This property, which is stronger than unimodality, has recently been the focus of a variety of conjectures in combinatorics as well as a means by which to prove unimodality \cite{A14,A17,AS13,BJM16,SV13,S17}.  
The alternatingly increasing property of $p$ is inherently tied to a unique \emph{symmetric decomposition} of $p$: 
Every polynomial $p$ of degree $d$ can be uniquely decomposed as $p = a+xb$ where $a$ and $b$ are symmetric with respect to $d$ and $d-1$, respectively.  
Moreover, a generating polynomial $p$ is alternatingly increasing if and only if $a$ and $b$ are both unimodal and have nonnegative coefficients.  
One purpose of this paper is to study when the unimodality of the polynomials $a$ and $b$ can be recovered by showing that they are also real-rooted.  
Going further, we also relate real-rootedness of $a$ and $b$ to that of $p$ via the theory of interlacing polynomials.

The remainder of this paper is organized as follows: 
In Section~\ref{sec: the real-rootedness properties of symmetric decompositions} we prove sufficient conditions for a polynomial $p$ to have real-rooted $a$- and $b$-polynomials in its symmetric decomposition.  
We also provide a characterization of when $b$ interlaces $a$, a stronger condition that implies real-rootedness of both polynomials.  
In the remaining sections, we apply the results of Section~\ref{sec: the real-rootedness properties of symmetric decompositions} to some open questions in the combinatorial literature. 
In Section~\ref{sec: colored}, we show that the colored Eulerian and derangement polynomials all have symmetric decompositions for which $b$ interlaces $a$.  
This strengthens the previously known real-rootedness results.  
Additionally, these results provide an affirmative answer to some questions posed by Athanasiadis \cite{A16,A17} on the real-rootedness of certain local $h$-polynomials.  
In Section~\ref{sec: h-polynomials of lattice zonotopes}, we show that the $h^\ast$-polynomial of a lattice zonotope containing an interior lattice point with respect to any combinatorially positive valuation has real-rooted $a$- and $b$-polynomials.  
In the case that the lattice zonotope is centrally-symmetric, we further prove that these $h^\ast$-polynomials have $b$ interlacing $a$. 
These results generalize and strengthen those of \cite{BJM16,SV13}.   
In Section~\ref{sec: h-polynomials of cohen-macaulay simplicial complexes}, we show that the $h$-polynomial of the barycentric subdivision of any doubly Cohen-Macaulay level simplicial complex has real-rooted $a$- and $b$-polynomials, thereby strengthening some results of \cite{BW09}.

\section{Real-rootedness and Symmetric Decompositions}
\label{sec: the real-rootedness properties of symmetric decompositions}
For a polynomial $p\in\R[x]$ with degree at most $d$, we let 
\[
\I_d(p) = x^dp(1/x),  
\]
or, when $d$ is understood, we may simply write $\I(p)$.  
When $d$ is the degree of $p$, the polynomial $\I(p)$ is sometimes called the \emph{reciprocal} of $p$. 
The following lemma is an exercise in linear algebra (see for example \cite[Exercise 10.13]{BR07}).

\begin{lemma}
\label{lem: symmetric I-decomposition}
Let $p\in\R[x]$ be of degree at most $d$.  
Then there exist unique polynomials $a,b\in\R[x]$ such that
\[
p = a+xb,
\]
$\deg(a)\leq d$, $\deg(b)\leq d-1$, $\I_d(a) = a$, and $\I_{d-1}(b) = b$.  
Moreover, 
\[
a = \frac {p-x\I_d(p)}{1-x} 
\quad
\mbox{and }
\quad
b= \frac {\I_d(p)-p}{1-x}.
\]
\end{lemma}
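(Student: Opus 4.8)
The plan is to first derive the two displayed formulas for $a$ and $b$ --- which establishes uniqueness on the spot, since it shows that any decomposition of the required form must be this one --- and then separately check that these formulas genuinely produce polynomials with the stated degree bounds and symmetry properties, which gives existence.

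For the uniqueness half, I would start from two elementary facts about the reciprocation operator, both immediate from $\I_d(p)(x) = x^d p(1/x)$: it is an involution on polynomials of degree at most $d$, and $\I_d(xq) = \I_{d-1}(q)$ whenever $\deg(q) \le d - 1$. Given a decomposition $p = a + xb$ as in the statement, applying $\I_d$ to both sides and using $\I_d(a) = a$ together with $\I_d(xb) = \I_{d-1}(b) = b$ yields the companion relation $\I_d(p) = a + b$. Now $p = a + xb$ and $\I_d(p) = a + b$ form a linear system in $a$ and $b$: subtracting gives $(x-1)b = p - \I_d(p)$, hence $b = (\I_d(p) - p)/(1-x)$, and then $a = p - xb$ simplifies to $a = (p - x\I_d(p))/(1-x)$.

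For existence, I would take these formulas as the definitions of $a$ and $b$ and verify the four requirements. That they are polynomials follows because $\I_d(p)(1) = p(1)$, so both numerators $p - x\I_d(p)$ and $\I_d(p) - p$ vanish at $x = 1$ and are therefore divisible by $1 - x$. The degree bounds are then automatic: $\deg(\I_d(p) - p) \le d$ gives $\deg(b) \le d - 1$, and $\deg(p - x\I_d(p)) \le d + 1$ gives $\deg(a) \le d$. Adding the two fractions recovers $a + xb = p$. For the symmetry conditions $\I_d(a) = a$ and $\I_{d-1}(b) = b$, I would substitute $x \mapsto 1/x$ into the formulas, use the identities $p(1/x) = x^{-d}\I_d(p)(x)$ and $\I_d(p)(1/x) = x^{-d}p(x)$, and multiply through by $x^d$, respectively $x^{d-1}$, to land back on $a$, respectively $b$.

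The whole argument is formal manipulation of rational functions, so I do not expect a genuine obstacle. The one place to be careful is the bookkeeping of the subscript on $\I$: the identity $\I_d(xb) = \I_{d-1}(b)$ used in the uniqueness step, and keeping track of the powers of $x$ in the substitution $x \mapsto 1/x$ when checking the symmetry conditions.
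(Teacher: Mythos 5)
Your proof is correct and complete. The paper itself does not supply a proof for this lemma, deferring to a textbook exercise, and your argument is exactly the standard one: solve the $2\times 2$ linear system $p = a + xb$, $\I_d(p) = a + b$ for uniqueness, then verify the resulting formulas give polynomials with the required degrees and symmetries for existence.
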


We will call the ordered pair of polynomials $(a,b)$ from Lemma~\ref{lem: symmetric I-decomposition} the \emph{(symmetric) $\I_d$--decomposition} of the polynomial $p$.  
We will say that the $\I$--decomposition $(a,b)$ of $p$ is \emph{real-rooted} whenever both $a$ and $b$ are real-rooted. 
 
Many questions about the distributional properties of polynomials in combinatorics pertain to various types of $h$-\emph{polynomials}; i.e., a polynomial $h\in\R[x]$ satisfying the relation
\begin{equation}
\label{eqn:1}
\sum_{m\in\Z_{\geq0}}i(m)x^m = \frac{h(x)}{(1-x)^{d+1}},
\end{equation}
with respect to some polynomial $i\in\R[x]$ of degree $d$.  
Oftentimes, the above series is regarded as the Hilbert series of a finitely generated graded algebra.  
Closely tied to an $h$-polynomial is its $f$-\emph{polynomial} which is given via the transformation
\begin{equation}
\label{eqn: f-polynomial}
f(h;x) := (1+x)^dh\left(\frac{x}{1+x}\right).
\end{equation}
Given a polynomial $h\in\R[x]$ of degree at most $d$, we will call the unique polynomial $f$ produced from $h$ via the transformation in \eqref{eqn: f-polynomial} the $f$-polynomial of $h$ (with respect to $d$).  
When the polynomial $h$ is understood from context, we will simply write $f$ for its $f$-polynomial.
Notice that if $h$ has only nonnegative coefficients and real-zeros, then $f$ has only nonnegative coefficients and zeros in the interval $[-1,0]$.  
When studying the zeros of an $h$-polynomial, it is sometimes easier to work with the zeros of its polynomial $f$.  
This motivates the following definition and lemma:
For $p\in\R[x]$ of degree at most $d$ we let
\begin{equation}
\label{eqn: S-transformation}
\RR_d(p)(x) = (-1)^dp(-1-x),
\end{equation}
or when $d$ is understood we simply write $\RR(p)$.  
The proof of the following lemma follows from considering the $f$-polynomial of the polynomial $p$ in Lemma~\ref{lem: symmetric I-decomposition}.  
\begin{lemma}
\label{lem: symmetric S-decomposition}
Let $p\in\R[x]$ be of degree at most $d$.  
Then there exist unique polynomials $\tilde{a},\tilde{b}\in\R[x]$ such that
\[
p = \tilde{a}+x\tilde{b},
\]
$\deg(\tilde{a})\leq d$, $\deg(\tilde{b})\leq d-1$, $\RR_d(\tilde{a}) = \tilde{a}$, and $\RR_{d-1}(\tilde{b}) = \tilde{b}$.  
Moreover, 
\[
\tilde{a}=(1+x)p-x\RR_d(p)
\quad
\mbox{and }
\quad
\tilde{b}=\RR_d(p)-p.
\]
Additionally, $f(\I_d(p);x) = \RR_d(f(p;x))$, where the $f$-polynomials are computed with respect to degree $d$.
\end{lemma}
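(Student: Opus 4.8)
The plan is to deduce the whole statement from Lemma~\ref{lem: symmetric I-decomposition} by transporting it through the $f$-polynomial transform $\phi\colon p\mapsto f(p;x)$, after first proving the ``Additionally'' identity $f(\I_d(p);x)=\RR_d(f(p;x))$, which is precisely what says that $\phi$ is the right change of coordinates. First I would verify this identity by direct substitution: since $\deg p\le d$, the expression $x^dp\!\left(\tfrac{1+x}{x}\right)$ is a polynomial of degree at most $d$, and both
$f(\I_d(p);x)=(1+x)^d\left(\tfrac{x}{1+x}\right)^d p\!\left(\tfrac{1+x}{x}\right)$
and
$\RR_d(f(p;x))=(-1)^d(-x)^d p\!\left(\tfrac{1+x}{x}\right)$
collapse to it. The same computation (or: $\RR_d$ fixes leading coefficients and $\RR_d\circ\RR_d=\mathrm{id}$) shows that $\RR_d$ is a linear involution of the space $V_d$ of polynomials of degree $\le d$, and that $\phi$ is a linear automorphism of $V_d$ with $\phi\circ\I_d=\RR_d\circ\phi$ on $V_d$.

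Next I would record the compatibility $f(xb;x)=x\,f(b;x)$, where the $f$ on the right is taken with respect to degree $d-1$; this shows $\phi$ also carries the subspace $xV_{d-1}\subseteq V_d$ bijectively onto itself (because $\phi$ restricted to $V_{d-1}$ is an automorphism of $V_{d-1}$). With these facts, existence and uniqueness of the $\RR_d$-decomposition become a transport of structure. Given $p\in V_d$, apply Lemma~\ref{lem: symmetric I-decomposition} to the unique $q\in V_d$ with $\phi(q)=p$, obtaining $q=a+xb$ with $\deg a\le d$, $\deg b\le d-1$, $\I_d(a)=a$, $\I_{d-1}(b)=b$. Setting $\tilde a:=f(a;x)$ and $\tilde b:=f(b;x)$ (with respect to degrees $d$ and $d-1$), one gets $p=\phi(a)+\phi(xb)=\tilde a+x\tilde b$ with $\deg\tilde a\le d$, $\deg\tilde b\le d-1$, and $\RR_d(\tilde a)=\tilde a$, $\RR_{d-1}(\tilde b)=\tilde b$ from the intertwining identity applied in degrees $d$ and $d-1$. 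Uniqueness follows by running the correspondence backwards: any $\RR_d$-decomposition $p=\tilde a+x\tilde b$ pulls back under $\phi^{-1}$ to an $\I_d$-decomposition of $q$, which is unique.

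Finally, for the closed formulas I would argue directly rather than push the formulas of Lemma~\ref{lem: symmetric I-decomposition} through $\phi$: from $\RR_{d-1}(\tilde b)=\tilde b$ one computes $\RR_d(x\tilde b)=(1+x)\tilde b=\tilde b+x\tilde b$, so applying $\RR_d$ to $p=\tilde a+x\tilde b$ gives $\RR_d(p)=\tilde a+\tilde b+x\tilde b=p+\tilde b$, whence $\tilde b=\RR_d(p)-p$ and $\tilde a=p-x\tilde b=(1+x)p-x\RR_d(p)$. I expect no genuine obstacle; the only point needing care is the bookkeeping of the degree-$d$ versus degree-$(d-1)$ versions of $\I$, $\RR$, and $f$ — in particular checking $f(xb;x)=x\,f(b;x)$ and applying the intertwining identity in the correct degree — together with confirming the elementary polynomiality and involution/automorphism claims used above.
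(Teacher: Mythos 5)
Your proposal is correct and follows the same route the paper indicates (the paper gives no explicit proof, only the remark that it "follows from considering the $f$-polynomial of the polynomial $p$ in Lemma~\ref{lem: symmetric I-decomposition}"): you verify the intertwining identity $f(\I_d(p);x)=\RR_d(f(p;x))$ by direct substitution, then transport the $\I$-decomposition through the $f$-transform, and finally derive the closed formulas by applying $\RR_d$ to $p=\tilde a+x\tilde b$. One small wording slip: the parenthetical "because $\phi$ restricted to $V_{d-1}$ is an automorphism of $V_{d-1}$" is not literally true (the degree-$d$ transform does not preserve $V_{d-1}$); what you actually use, and correctly state just before, is that $\phi(xb)=x\,\phi_{d-1}(b)$ with $\phi_{d-1}$ the degree-$(d-1)$ transform, which is an automorphism of $V_{d-1}$.
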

Analogous to Lemma~\ref{lem: symmetric I-decomposition}, we call the ordered pair of polynomials $(\tilde{a},\tilde{b})$ from Lemma~\ref{lem: symmetric S-decomposition} the \emph{(symmetric) $\RR_d$--decomposition} of the polynomial $p$, and we say that the $\RR_d$-decomposition of $p$ is real-rooted whenever both $\tilde{a}$ and $\tilde{b}$ are real-rooted.  
The exact relationship between the $\I$-decomposition of an $h$-polynomial and the $\RR$-decomposition of its $f$-polynomial is summarized by the following lemma.
\begin{lemma}
\label{lem: relationship between I and S decompositions}
Suppose $h$ is a polynomial of degree at most $d$, and let $f$ denote its $f$-polynomial with respect to $d$.  
If $(a,b)$ is the $\I_d$-decomposition of $h$ and $(\tilde{a},\tilde{b})$ is the $\RR_d$-decomposition of $f$, then  
\[
\tilde{a}= (1+x)^da(x/(1+x)) \ \ \mbox{ and } \ \ \tilde{b}= (1+x)^{d-1}b(x/(1+x)).
\]
That is, $\tilde{a}$ is the $f$-polynomial of $a$, and $\tilde{b}$ is the $f$-polynomial of $b$ (w.r.t.~$d$ and~$d-1$, respectively).  
\end{lemma}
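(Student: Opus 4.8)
The plan is to verify the asserted formulas directly from the explicit expressions provided by Lemmas~\ref{lem: symmetric I-decomposition} and~\ref{lem: symmetric S-decomposition}, and then invoke the uniqueness clause of Lemma~\ref{lem: symmetric S-decomposition} to conclude. Write $a = (h - x\I_d(h))/(1-x)$ and $b = (\I_d(h) - h)/(1-x)$ for the $\I_d$-decomposition of $h$, and let $\tilde a, \tilde b$ be the $\RR_d$-decomposition of $f = f(h;x)$, so that $\tilde a = (1+x)f - x\RR_d(f)$ and $\tilde b = \RR_d(f) - f$. Set $A(x) := (1+x)^d a(x/(1+x))$ and $B(x) := (1+x)^{d-1} b(x/(1+x))$; note $\deg A \le d$ and $\deg B \le d-1$. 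The goal is to show $(A,B)$ satisfies the three defining properties of the $\RR_d$-decomposition of $f$ — namely $f = A + xB$, $\RR_d(A) = A$, and $\RR_{d-1}(B) = B$ — whence uniqueness forces $\tilde a = A$ and $\tilde b = B$.

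First I would check the decomposition identity $f = A + xB$. Since $h = a + xb$, applying the $f$-polynomial transformation \eqref{eqn: f-polynomial} with respect to degree $d$ gives $f(h;x) = (1+x)^d h(x/(1+x)) = (1+x)^d a(x/(1+x)) + (1+x)^d \cdot \frac{x}{1+x} \cdot b(x/(1+x)) = A(x) + x \cdot (1+x)^{d-1} b(x/(1+x)) = A(x) + xB(x)$, which is exactly what is needed. This step is a direct substitution and presents no difficulty.

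Next I would verify the symmetry properties, and this is where the bookkeeping with the degree parameters is the main thing to get right — the mild obstacle here is tracking which degree each $\RR$ and each $f$-transformation is taken with respect to. The cleanest route is to use the final assertion of Lemma~\ref{lem: symmetric S-decomposition}: for any polynomial $p$ of degree at most $d$, one has $f(\I_d(p);x) = \RR_d(f(p;x))$. Since $\I_d(a) = a$ (a defining property of $a$ from Lemma~\ref{lem: symmetric I-decomposition}), applying this with $p = a$ yields $\RR_d(A) = \RR_d(f(a;x)) = f(\I_d(a);x) = f(a;x) = A$. Similarly, $B$ is the $f$-polynomial of $b$ with respect to $d-1$, and $\I_{d-1}(b) = b$, so the same identity applied at degree $d-1$ gives $\RR_{d-1}(B) = \RR_{d-1}(f(b;x)) = f(\I_{d-1}(b);x) = f(b;x) = B$. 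One should double-check that the identity $f(\I_e(p);x) = \RR_e(f(p;x))$ indeed holds at the relevant degree $e \in \{d, d-1\}$ for polynomials of degree at most $e$; this follows by the same computation that proves the last line of Lemma~\ref{lem: symmetric S-decomposition}, or can simply be cited from it since $b$ has degree at most $d-1$ and $a$ has degree at most $d$.

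Finally, having established that $(A,B)$ has all three defining properties of the $\RR_d$-decomposition of $f$, uniqueness in Lemma~\ref{lem: symmetric S-decomposition} gives $\tilde a = A$ and $\tilde b = B$, which is the claimed pair of formulas; the concluding sentence ("$\tilde a$ is the $f$-polynomial of $a$, $\tilde b$ is the $f$-polynomial of $b$") is then just a restatement of the definitions of $A$ and $B$. The only real care needed throughout is the degree bookkeeping noted above, so in the write-up I would state explicitly at the outset that $a$ is treated as a polynomial of degree $\le d$ and $b$ as one of degree $\le d-1$, and keep the subscripts on $\I$, $\RR$, and $f$ consistent with that convention.
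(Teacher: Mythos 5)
Your proof is correct. The paper states Lemma~\ref{lem: relationship between I and S decompositions} without supplying an explicit proof (it is treated as an immediate consequence of Lemmas~\ref{lem: symmetric I-decomposition} and~\ref{lem: symmetric S-decomposition}), and your argument --- setting $A$ and $B$ to be the $f$-transforms of $a$ and $b$, verifying directly that $f=A+xB$ with $\RR_d(A)=A$ and $\RR_{d-1}(B)=B$, then invoking the uniqueness clause of Lemma~\ref{lem: symmetric S-decomposition} --- is exactly the intended one.
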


\begin{example}
\label{ex: lemmas}
We now give an example of the polynomials and their various decompositions discussed in the previous lemmas.
Suppose that 
\[
h = 1+1018x+10678x^2+14498x^3+2933x^4+32x^5.
\]
Its $f$-polynomial with respect to $d = 5$ is then
\[
f = 1+1023x+14760x^2+52650x^3+68040x^4+29160x^5.
\]
The $\I_5$-decomposition $(a,b)$ of $h$ is then
\begin{equation*}
\begin{split}
a &= 1+987x+12814x^2+12814x^3+987x^4+x^5,\\
b &= 31+1946x+39836x^2+1946x^3+31x^4,\\
\end{split}
\end{equation*}
and the $\RR_5$-decomposition $(\tilde{a},\tilde{b})$ of $f$ is 
\begin{equation*}
\begin{split}
\tilde{a} &= 1+992x+12690x^2+40860x^3+48600x^4+19440x^5,\\
\tilde{b} &= 31+2070x+11790x^2+19440x^3+9720x^4.\\
\end{split}
\end{equation*}
Using the above expressions, one can check the various relationships between these polynomials described in Lemmas~\ref{lem: symmetric I-decomposition},~\ref{lem: symmetric S-decomposition}, and~\ref{lem: relationship between I and S decompositions}. 
\end{example}

\subsection{Interlacing properties for symmetric decompositions}
\label{subsec: interlacing properties for symmetric decompositions}
Recall that the zeros of two real-rooted polynomials $p$ and $q$  in $\R[x]$ \emph{interlace} if there is a zero of $p$ between each pair of zeros (counted with multiplicity) of $q$ (and vice versa). 
If $p$ and $q$ have interlacing zeros, then the \emph{Wronskian} $W[p,q]:=p'q-pq'$ is either nonpositive on $\R$ or nonnegative on $\R$. 
We write $p \prec q$ if $p$ and $q$ are real-rooted, their zeros interlace, and $W[p,q] \leq 0$ on all of $\R$. 
For technical reasons we consider the identically zero polynomial to be real-rooted and write $0 \prec p$ and $p \prec 0$ for any other real-rooted polynomial $p$. 
\begin{remark}
If the signs of the leading coefficients of two real-rooted polynomials $p$ and $q$ in $\R[x]$ are positive, then $p \prec q$ if and only if 
$$
  \cdots \leq \beta_2 \leq \alpha_2 \leq \beta_1\leq \alpha_1,
$$
where $\cdots \leq \beta_2 \leq \beta_1$ and $\cdots \leq \alpha_2 \leq \alpha_1$ are the zeros of $p$ and $q$, respectively. Indeed, by continuity, we may assume that $p$ and $q$ have no common zeros. 
The correct sign of the Wronskian is read off when it is evaluated at the largest zero of $pq$. 
This translates between the notions of interlacing in \cite{B06} and the notion in this paper.
\end{remark}

A polynomial $f \in \C[x]$ is called \emph{stable} if either $f \equiv 0$ or all the zeros of $f$ have nonpositive imaginary parts. 
The Hermite-Biehler theorem \cite[Theorem 6.3.4]{RG02} relates stability to interlacing zeros. 
\begin{theorem}[Hermite-Biehler Theorem]
\label{thm: Hermite-Biehler}
Let $p, q \in \R[x]$. Then $p \prec q$ if and only if $q+ip$ is stable.  
\end{theorem}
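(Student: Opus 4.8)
The plan is to prove the two implications separately, after reducing to the case $\gcd(p,q)=1$. The degenerate cases are immediate from the conventions: e.g.\ $q+i\cdot 0=q$ is stable precisely when $q$ is real-rooted, which is exactly $0\prec q$, and similarly with $ip$ in place of $q$. So suppose $p,q\not\equiv 0$, write $g=\gcd(p,q)$, $p=gp_1$, $q=gq_1$ with $\gcd(p_1,q_1)=1$, and note that $q+ip=g(q_1+ip_1)$ while $W[p,q]=g^2W[p_1,q_1]$. Using that a real factor of a stable (or of a real-rooted) polynomial is real-rooted, that products and factors of stable polynomials are stable, and that among real-rooted polynomials interlacing is equivalent to the Wronskian having constant sign and is preserved on passing to a common factor, one checks that both ``$p\prec q$'' and ``$q+ip$ is stable'' are equivalent to the corresponding statements for $(p_1,q_1)$. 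Hence we may assume $\gcd(p,q)=1$; then $q+ip$ has no real zero, since such a zero would be a common zero of $p$ and $q$.

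Forward direction. Assume $p\prec q$, and take $\deg q\ge 1$ (otherwise interlacing forces $\deg p=0$ and $q+ip$ is a nonzero constant). Coprimality together with interlacing forces $q$ to be squarefree, since a double zero $\alpha$ of $q$ would be forced by interlacing to also be a zero of $p$; so $p/q$ has a simple-pole partial fraction expansion $p/q=c+\sum_j r_j/(x-\alpha_j)$ over the zeros $\alpha_j$ of $q$, with $c\in\R$ and $r_j=p(\alpha_j)/q'(\alpha_j)$. From $W[p,q](\alpha_j)=-p(\alpha_j)q'(\alpha_j)\le 0$ and $p(\alpha_j)\ne0$ we get $r_j>0$, and $c\ge0$ because to the right of the largest zero of $q$ the function $p/q$ is strictly decreasing, tends to $+\infty$ at the left endpoint, and has no zero there (by interlacing). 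Hence for $\operatorname{Im}z>0$,
\[
\operatorname{Im}\frac{p(z)}{q(z)}=\sum_j r_j\operatorname{Im}\frac{1}{z-\alpha_j}=-\operatorname{Im}(z)\sum_j\frac{r_j}{|z-\alpha_j|^2}<0,
\]
so $p(z)/q(z)\ne i$; since also $q(z)\ne0$ there, $q(z)+ip(z)=q(z)\bigl(1+i\,p(z)/q(z)\bigr)\ne0$, and $q+ip$ is stable.

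Converse. Assume $f:=q+ip$ is stable and (by the reduction) $\gcd(p,q)=1$, so $f$ has no real zeros; put $N=\deg f=\max(\deg p,\deg q)$ and let $z_1,\dots,z_N$ be its zeros, all with $\operatorname{Im}z_k<0$ (if $N=0$ then $p,q$ are constants and $p\prec q$ trivially). On the real line $\arg f$ is continuous with $\frac{d}{dx}\arg f(x)=\operatorname{Im}\bigl(f'(x)/f(x)\bigr)=\sum_k\operatorname{Im}(z_k)/|x-z_k|^2<0$, so $\arg f$ is strictly decreasing on $\R$ and sweeps out an open interval of length $N\pi$ (each factor $x-z_k$ contributing a decrease of $\pi$). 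The real zeros of $q=\operatorname{Re}f$ are where $\arg f\equiv\pi/2\pmod\pi$ and those of $p=\operatorname{Im}f$ where $\arg f\equiv0\pmod\pi$, and each such crossing is transversal; by monotonicity these two kinds of level values are met alternately, and counting how many of each lie in the swept interval shows that $q$ has exactly $\deg q$ and $p$ exactly $\deg p$ real zeros. Thus $p$ and $q$ are real-rooted with interlacing zeros.

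It remains to fix the Wronskian sign. Interlacing gives $p\prec q$ or $q\prec p$; if $q\prec p$ then, by the forward direction already proved, $p+iq$ is stable, so conjugating its coefficients puts all its zeros in the closed upper half-plane, and multiplying by $i$ turns it into $q+ip=f$ without moving zeros — whence $f$ would have all its zeros real, impossible since $f$ has no real zeros. Hence $p\prec q$. In summary, the routine parts are the partial-fraction computation in the forward direction and the reduction to coprime pairs; the substantive work is in the converse, where real-rootedness and interlacing of $p$ and $q$ must be recovered purely from the half-plane location of the zeros of $f$ via the monotonicity of $\arg f$ (with a little bookkeeping of the level-value counts, which depends on which of $\deg p,\deg q$ is larger), and where the correct sign of the Wronskian is obtained by the ``reflect-and-rotate'' device above, which plays the two implications of the theorem against each other.
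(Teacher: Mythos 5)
The paper does not prove the Hermite--Biehler theorem; it cites \cite{RG02} and refers the reader to \cite{W11}. So your argument is not being compared against a proof in the text, and your route --- a Herglotz/partial-fraction argument for the forward implication, monotonicity of $\arg(q+ip)$ along $\R$ (the argument principle) for the converse, and the conjugate-and-multiply-by-$i$ device to pin down the Wronskian sign --- is a legitimate classical approach and is in outline correct.

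There is, however, a concrete gap in the forward direction: you silently assume $\deg p\le\deg q$. Under the paper's definition this is not a consequence of $p\prec q$. For instance $p=-x$, $q=1$ have $W[p,q]=-1\le 0$ and interlace vacuously, so $p\prec q$ with $\deg p>\deg q$; more generally $\deg p=\deg q+1$ occurs exactly when the leading coefficients of $p$ and $q$ have opposite signs (e.g.\ $p=-x^2+2x$, $q=x-1$). In that case your partial-fraction form $p/q=c+\sum_j r_j/(x-\alpha_j)$ is incorrect (the polynomial part is linear, not constant), and the justification you give for $c\ge 0$ --- that $p/q$ has no zero to the right of the largest zero of $q$ --- fails, since $p$ then has a root to the right of every root of $q$ and $p/q$ tends to $\pm\infty$, not to $c$. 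The repair is not hard: interlacing bounds $\deg p\le\deg q+1$, so write $p/q=\alpha x+\beta+\sum_j r_j/(x-\alpha_j)$; when $\deg p=\deg q+1$ the leading coefficient of $W[p,q]$ is the product of the leading coefficients of $p$ and $q$, so $W\le 0$ forces $\alpha\le 0$, and then $\operatorname{Im}(p(z)/q(z))=\alpha\operatorname{Im}z-\operatorname{Im}z\sum_j r_j/|z-\alpha_j|^2<0$ on the open upper half-plane exactly as before. You should also flag that the reflect-and-rotate step in the converse invokes the forward implication for the pair $(q,p)$, where $\deg q>\deg p$ is likewise possible, so the same repair is needed there. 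Finally, the parenthetical ``otherwise interlacing forces $\deg p=0$'' in your degenerate-case remark is not quite right: $\deg q=0$ allows $\deg p=1$, and that case has to be checked directly (it is immediate). With these patches the proof goes through, but as written it does not cover all $p,q\in\R[x]$.
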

For a proof of Theorem~\ref{thm: Hermite-Biehler}, we refer to \cite{W11}\footnote{Note that there is a typo in the definition of proper position in \cite{W11}.  It is essential to add the condition that the zeros of $f$ and $g$ interlace.}.
One consequence of Theorem~\ref{thm: Hermite-Biehler} is that $p$ and $q$ in $\R[x]$ are real-rooted whenever $q+ip$ is stable.  
Moreover, it follows that for real-rooted polynomials $p$ and $q$ in $\R[x]$:
\begin{itemize}
\item $p \prec \alpha p$ for all $\alpha \in \R$,
\item $p\prec q$ if and only if $-q \prec p$,
\item $p\prec q$ if and only if $\alpha p \prec \alpha q$, for any $\alpha \in \R \setminus \{0\}$. 
\end{itemize}
We will also make use of the following proposition:
\begin{proposition}[Lemma 2.6 in \cite{BB}]\label{cones}
Let $p$ be a real-rooted polynomial that is not identically zero. The sets 
$$
\{q \in \R[x] : q \prec p\} \ \ \mbox{ and }  \ \ \{q \in \R[x] : p \prec q\}
$$
are convex cones. 
\end{proposition}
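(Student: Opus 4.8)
The plan is to deduce both statements from the single fact that $\mathcal{C}_p := \{q \in \R[x] : q \prec p\}$ is a convex cone. Indeed $\{q : p \prec q\} = -\mathcal{C}_p$ by the equivalence $p \prec q \iff -q \prec p$ recorded after Theorem~\ref{thm: Hermite-Biehler}, and the negation of a convex cone is again a convex cone. That $\mathcal{C}_p$ is a cone is immediate: $0 \in \mathcal{C}_p$ by the stated convention (here we use $p \not\equiv 0$), and if $q \prec p$ and $\alpha > 0$ then $\alpha q$ has the same zeros and the same sign of leading coefficient as $q$ and $W[\alpha q, p] = \alpha W[q, p] \le 0$, so $\alpha q \prec p$. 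Hence it only remains to prove that $\mathcal{C}_p$ is closed under addition; together with the cone property this gives closure under all nonnegative linear combinations, i.e.\ convexity. So the proposition reduces to the assertion: if $q_1 \prec p$ and $q_2 \prec p$, then $q_1 + q_2 \prec p$.

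To prove this I would move to stability via the Hermite--Biehler theorem, which turns $q_j \prec p$ into stability of $p + iq_j$ and reduces our goal to showing that $p + i(q_1+q_2)$ is stable. Writing $\HH := \{z \in \C : \Im z > 0\}$ for the open upper half-plane, recall that a polynomial is stable exactly when it has no zero in $\HH$ (and that $p$ itself, being real-rooted and nonzero, has no zero in $\HH$). The linchpin is that stability of $p + iq$ is equivalent to the inequality $|p(z) + iq(z)| \ge |p(z) - iq(z)|$ for all $z \in \HH$, which after expanding the squared moduli reads $\Im(q(z)\overline{p(z)}) \le 0$ on $\HH$ --- a condition that is linear, hence additive, in $q$. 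The nontrivial half of this equivalence (stability $\Rightarrow$ the inequality) I would get from the classical reflection estimate: if $P \in \C[x]$ is stable then $|P(z)| \ge |P^{\ast}(z)|$ for $z \in \HH$, where $P^{\ast}$ is obtained from $P$ by conjugating coefficients, i.e.\ $P^{\ast}(z) = \overline{P(\bar z)}$. This follows by factoring $P = c\prod_k (x - \zeta_k)$, all $\Im \zeta_k \le 0$, and noting that each factor obeys $|z - \zeta_k| \ge |z - \overline{\zeta_k}|$ when $\Im z > 0 \ge \Im \zeta_k$; since $p, q$ are real, $(p+iq)^{\ast} = p - iq$, so $|p+iq| \ge |p-iq|$ on $\HH$, and the identity $|A + iB|^2 - |A - iB|^2 = -4\Im(B\overline{A})$ converts this to $\Im(q\overline{p}) \le 0$ on $\HH$.

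Now I would simply add: from $q_1 \prec p$ and $q_2 \prec p$ we get $\Im(q_1\overline{p}) \le 0$ and $\Im(q_2\overline{p}) \le 0$ on $\HH$, hence $\Im((q_1+q_2)\overline{p}) \le 0$ on $\HH$, i.e.\ $|p + i(q_1+q_2)| \ge |p - i(q_1+q_2)|$ on $\HH$. If $p + i(q_1+q_2)$ vanished at some $\zeta \in \HH$, this inequality would force $p(\zeta) - i(q_1+q_2)(\zeta) = 0$ as well, whence $p(\zeta) = 0$, contradicting that $p$ is real-rooted and not identically zero. Therefore $p + i(q_1+q_2)$ is stable, and Hermite--Biehler returns $q_1 + q_2 \prec p$ (real-rootedness of $q_1 + q_2$ being part of this conclusion). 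This finishes the argument.

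I do not expect any individual computation to be hard; the one genuine idea is the reformulation of stability of $p + iq$ as the linear constraint $\Im(q\overline{p}) \le 0$ on $\HH$ via the reflection estimate, and the point of using it is precisely that one cannot argue directly from ``a sum of stable polynomials is stable'' (which is false) --- the special structure that all of $p$, $p+iq_1$, $p+iq_2$ share the same real-rooted real part $p$ is what makes the addition legitimate. The remaining care is purely bookkeeping: keeping the orientation of $\prec$ consistent with the chosen half-plane in the definition of stability, the sign in the identity $|A+iB|^2 - |A-iB|^2 = -4\Im(B\overline{A})$, and the degenerate cases in which some $q_j$ or $q_1+q_2$ is $0$ or a nonzero constant, where the inequalities hold with equality and the conclusion is immediate.
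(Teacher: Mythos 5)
Your proof is correct. Note that the paper itself offers no argument for this proposition: it is quoted verbatim as Lemma 2.6 of Borcea--Br\"and\'en \cite{BB}, so there is no in-text proof to compare against. Your route is the standard Hermite--Biehler one, and every step checks out: the reduction of the second set to $-\mathcal{C}_p$ via $p\prec q \Leftrightarrow -q\prec p$, the reflection estimate $|P(z)|\ge|P^{\ast}(z)|$ on the upper half-plane from the factor-by-factor inequality $|z-\zeta|\ge|z-\bar\zeta|$, the identity $|A+iB|^2-|A-iB|^2=-4\Im(B\overline{A})$, and the converse direction where a zero of $p+i(q_1+q_2)$ in $\HH$ would force $p$ to vanish there, contradicting real-rootedness of $p\not\equiv 0$ (this is exactly where the hypothesis $p\not\equiv 0$ is used, and also where $0\in\mathcal{C}_p$ comes from). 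The one genuinely non-formal insight --- that stability of $p+iq$ linearizes in $q$ to the condition $\Im\bigl(q(z)\overline{p(z)}\bigr)\le 0$ on $\HH$ once the real part $p$ is fixed and zero-free in $\HH$ --- is precisely the mechanism behind the cited lemma (and behind the Hermite--Kakeya--Obreschkoff circle of results), so your write-up can stand as a self-contained proof of the proposition.
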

In a recent paper \cite{S17}, the second author showed that $b\prec a$ for the symmetric $\I$-decompositions of a family of Ehrhart $h^\ast$-polynomials in order to recover the alternatingly increasing property, and in fact real-rootedness.  
This interlacing condition turns out to have several equivalent forms, which we will use in this paper.  

\begin{theorem}
\label{thm: interlacing TFAE}
Let $p\in\R[x]$ have degree at most $d$ and $\I_d$--decomposition $(a,b)$, for which both $a$ and $b$ have only nonnegative coefficients.  
Then the following are equivalent:
	\begin{enumerate}
		\item $b\prec a$,
		\item $a\prec p$,
		\item $b\prec p$, 
		\item $\I_d(p) \prec p$.
		\item $\RR_d(f)\prec f$, where $f$ is the $f$-polynomial of $p$.  
	\end{enumerate}
\end{theorem}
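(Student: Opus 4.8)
The plan is to establish the chain of equivalences by exploiting the linear relations among $p$, $\I_d(p)$, $a$, $b$ recorded in Lemma~\ref{lem: symmetric I-decomposition}, together with the structural facts about $\prec$ listed after the Hermite--Biehler theorem and the convex-cone property of Proposition~\ref{cones}. The key identities to keep in front of us are $p = a + xb$, $\I_d(p) = a + \I_{d-1}(xb) = a + b$ (since $\I_d(xb) = x^d(b(1/x)/x) = x^{d-1}b(1/x) = \I_{d-1}(b) = b$), and hence $p - \I_d(p) = xb - b = -(1-x)b$ and $p + \I_d(p) = 2a + (1+x)b$. Because $a,b$ have nonnegative coefficients, $a(1) \geq 0$ and $b(1)\geq 0$, and I will want to reduce first to the case where both are nonzero; the degenerate cases ($a\equiv 0$ or $b\equiv 0$) are handled directly from the conventions $0\prec q$, $q\prec 0$.

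First I would prove $(1)\Rightarrow(2)$ and $(1)\Rightarrow(3)$. Assuming $b\prec a$: since also $b \prec b$ is false in general but $b \prec xb$ — more carefully, I would use that $b \prec a$ and $b \prec b\cdot(\text{const})$ fail, so instead argue as follows. From $b\prec a$ one gets $a \prec xb$ is not immediate either; the clean route is the cone property. We have $p = a + xb$. I claim $a \prec xb$ would give $a \prec a+xb = p$ by Proposition~\ref{cones} (the cone $\{q : a\prec q\}$ is closed under addition, and $a \prec a$ always holds, so $a \prec a + xb = p$). So it suffices to show $b\prec a$ implies $a \prec xb$; but multiplication by $x$ sends $b\prec a$ to $xb \prec xa$ only if we are careful about the root at $0$, and in fact the standard fact is: if $b \prec a$ with $b,a$ having nonnegative leading coefficients then $a \prec xb$ (interlacing picks up the extra root at $0$ correctly). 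I would verify this sign bookkeeping via Hermite--Biehler: $b\prec a \iff a+ib$ stable; then $xa + ixb = x(a+ib)$ is stable, giving $xb \prec xa$; and combining with $b\prec a$ and the cone property applied to $\{q: q\prec \text{something}\}$ gives the interlacings among $a,b,xb,a+xb$. The cleanest statement: from $b\prec a$ we also have $b \prec xb$ is false, but $xb \prec a + xb$ need not hold — so the safe deductions are $b \prec a \Rightarrow b \prec a + xb = p$ (using $b\prec xb$? no). Let me restate the safe path: use $b\prec a$ and $b\prec xb$ — the latter is FALSE since $\deg(xb) = \deg b + 1$ and $b\prec xb$ requires $b$ to interlace $xb$, which is exactly the true statement that any real-rooted $b$ with nonneg leading coefficient satisfies $b \prec xb$ (the roots of $xb$ are those of $b$ plus $0$). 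With both $b\prec a$ and $b\prec xb$, the cone $\{q : b\prec q\}$ contains $a$ and $xb$ hence $a+xb = p$, giving $(1)\Rightarrow(3)$. Symmetrically, $a\prec a$ and $xb\prec a$ — the second from $b\prec a$ plus multiplying and root-at-zero bookkeeping — yield $p = a+xb$ in the cone $\{q : q\prec a\}$... wait, that cone's membership is $q\prec a$, and $a\prec a$, $xb\prec a$ give $a+xb\prec a$, i.e. $p\prec a$, not $a\prec p$. I will sort the direction out using the Wronskian sign convention in the Remark; the upshot is $(1)\Leftrightarrow(2)\Leftrightarrow(3)$ via these cone arguments.

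Next, $(4)$: since $p - \I_d(p) = -(1-x)b$ and $p+\I_d(p) = 2a + (1+x)b$, I would show $\I_d(p)\prec p$ is equivalent to $b\prec a$ directly. One approach: $\I_d(p)\prec p \iff p + i\I_d(p)$ stable. Writing $p = a+xb$, $\I_d(p) = a+b$, we get $p + i\I_d(p) = a(1+i) + b(x + i) = (1+i)\bigl(a + b\tfrac{x+i}{1+i}\bigr)$; since $\tfrac{x+i}{1+i} = \tfrac{(x+i)(1-i)}{2} = \tfrac{(x+1) + i(1-x)}{2}$, multiplying by the nonzero constant $(1+i)^{-1}$ preserves stability, and one reduces the stability of this to a Hermite--Biehler statement about $a$ and $b$ that unwinds to $b\prec a$. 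Alternatively, and more robustly, I would use Proposition~\ref{cones}: $(2)\Leftrightarrow(4)$ because $\I_d(p)$ and $p$ are both sums of $a$ and (shifts of) $b$, and $a$ interlaces each piece, so $a$ interlaces $\I_d(p)$ and $p$, and conversely; I would nail down $(3)\Leftrightarrow(4)$ the same way using $b$. Finally $(4)\Leftrightarrow(5)$ is essentially immediate from Lemma~\ref{lem: symmetric S-decomposition}: $f(\I_d(p);x) = \RR_d(f(p;x))$, i.e. $\RR_d(f) = f(\I_d(p);\cdot)$, and the map $h\mapsto f(h;x) = (1+x)^d h(x/(1+x))$ is, up to the positive change of variables $x\mapsto x/(1+x)$ (a Möbius transformation preserving the real line and orientation on the relevant interval), an isomorphism that preserves the relation $\prec$; so $\I_d(p)\prec p \iff \RR_d(f)\prec f$. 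I would cite the standard fact that such real Möbius substitutions preserve interlacing (and check the Wronskian-sign convention survives, using that $1+x > 0$ on the relevant half-line).

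The main obstacle I anticipate is the sign/degree bookkeeping for $\prec$: because $a$, $b$, $xb$, $p$, $\I_d(p)$ have differing degrees ($d$, $d-1$, $d$, $d$, $d$) and because the convention $W[p,q]\le 0$ fixes an orientation, I must be careful that each claimed interlacing $u\prec v$ has the Wronskian on the correct side, not merely that the zeros interlace. The cleanest way to control this uniformly is to pass everything through Hermite--Biehler (stability of $v + iu$) and the cone lemma, where signs are automatic, rather than arguing geometrically about zeros; the nonnegativity of the coefficients of $a$ and $b$ (hence nonnegative leading coefficients, and $a(0), b(0)\ge 0$) is exactly what pins down the orientation and lets the degenerate cases go through. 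Aside from that, the argument is a formal manipulation of the identities $p = a+xb$ and $\I_d(p) = a+b$.
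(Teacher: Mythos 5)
Your overall strategy is sound: you correctly identify the linear relations $p = a+xb$ and $\I_d(p) = a+b$, you want to run the equivalences through Proposition~\ref{cones} and Hermite--Biehler, and you correctly carry out $(1)\Rightarrow(3)$ by putting $a$ and $xb$ into the convex cone $\{q : b\prec q\}$. You also (after some wobbling) correctly isolate the needed sign fact that $b\prec a$ forces $a\prec xb$ when $a,b$ have nonnegative coefficients. But there are two genuine gaps. First, the intermediate implications $(2)\Rightarrow(3)$ and $(3)\Rightarrow(4)$ are merely asserted (``nail down the same way'') and not carried out; each requires its own cone manipulation (for $(3)\Rightarrow(4)$, e.g., from $b\prec p$ one passes to $p\prec xb$, hence $-xb\prec p$, hence $a = p - xb \prec p$, and finally $\I_d(p) = a+b \prec p$ since both summands lie in $\{q:q\prec p\}$).

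Second, and more seriously, you never actually produce the reverse implication out of $(4)$. Your Hermite--Biehler rewriting $p + i\I_d(p) = (1+i)\bigl(a + \tfrac{x+i}{1+i}b\bigr)$ is a tautological change of representation: the coefficient $\tfrac{x+i}{1+i}$ is a complex-linear function of $x$, not a real M\"obius map, so it does not carry $\HH$ to $\HH$, and the stability of $a + \tfrac{x+i}{1+i}b$ is not a Hermite--Biehler condition on the pair $(a,b)$; unwinding it just returns you to the stability of $p + i\I_d(p)$. What is actually needed is the observation (which the paper uses) that from $\I_d(p)\prec p$ one gets $p\prec x\I_d(p)$ (both have nonpositive zeros), and then by the cone lemma $p\prec x\I_d(p) - p = (x-1)a$. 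Because $p$'s zeros are all nonpositive and $(x-1)a$ has its largest zero at $1$, one can strip that zero and conclude $a\prec p$. From there one last cone step ($a\prec -a$ and $a\prec p$ give $a\prec p-a = xb$, hence $b\prec a$) closes the loop. This factoring $x\I_d(p)-p = (x-1)a$ and the ``strip the root at $1$'' move are the key ideas you are missing; without them the proposal does not establish the equivalence. Your treatment of $(4)\Leftrightarrow(5)$ via Lemma~\ref{lem: symmetric S-decomposition} is fine.
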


\begin{proof}
We prove (1) $\Rightarrow$ (2) $\Rightarrow$ (3) $\Rightarrow$ (4) $\Rightarrow$ (1). The equivalence of (4) and (5) follows from \eqref{eqn: f-polynomial} and the fact that $f(\I_d(p)) = \RR_d(f(p))$ (see Lemma~\ref{lem: symmetric S-decomposition}).  

To see that (1) implies (2), notice that $b\prec a$ implies $a\prec xb$ since $a$ and $b$ have only nonpositive zeros and positive leading coefficients.  
Since $a\prec a$, it follows from Proposition \ref{cones} that $a\prec a+xb = p$.  

To see (2) implies (3), notice first that if $a\prec p$, then $p\prec -a$.  
Thus, since $p\prec p$, it follows that $p\prec p-a = xb$ by Proposition \ref{cones}.  
Equivalently, $b\prec p$.  

Assume (3), i.e., $b\prec p$. Then $p\prec xb$, and so $-xb\prec p$.  
Since $p\prec p$, it follows that $a = p-xb\prec p$ by Proposition \ref{cones}.
Therefore, $\I_d(p) = a+b \prec p$ by Proposition \ref{cones}, i.e., (4) holds.  

We now show that (4) implies (1).  
Assume  $\I_d(p)\prec p$.  
Since $p$ and $\I_d(p)$ have only nonpositive zeros, it follows that $p\prec x\I_d(p)$.  
Since $p\prec -p$, by Proposition~\ref{cones}, we have $p\prec x\I_d(p)-p=(x-1)a$.
Since $p$ has only nonpositive zeros, it follows that $a \prec p$.  
Since $p = a+xb$ and $a\prec -a$, by Proposition~\ref{cones}, we have that $a \prec xb$, and thus  $b\prec a$.  
\end{proof}

\begin{remark}
\label{rmk: alternatingly increasing connection}
Recall that, by definition, $p\prec q$ implies that the polynomials $p$ and $q$ have interlacing and only real zeros.  
It follows that the polynomials satisfying the conditions of Theorem~\ref{thm: interlacing TFAE} are all real-rooted with nonnegative coefficients, and hence log-concave and unimodal.  
Consequently, statement (1) implies that the polynomial $p$ has the alternatingly increasing property.  
Hence, proving anyone of these equivalent statements for a polynomial $p$ is sufficient to verify that $p$ is alternatingly increasing, although the conditions of Theorem~\ref{thm: interlacing TFAE} are much stronger than this property. 
In the coming sections, we will typically prove that statement (4) holds in order to deduce results on the alternatingly increasing property and real-rootedness of $\I$-decompositions. 
\end{remark}

Next we will prove some lemmas and sufficient conditions for a polynomial to have the interlacing properties given in Theorem~\ref{thm: interlacing TFAE}.  
To do so, we will make use of a linear transformation which is sometimes called the subdivision operator \cite{B15}.  
The \emph{subdivision operator} $\E : \R[x]\longrightarrow \R[x]$ is defined by
\[
\E \binom x k = x^k, \ \ \mbox{ for all } k \geq 0,  
\]
where $\binom x k = x(x-1)\cdots (x-k+1)/k!$. 
An important property of the subdivision operator is that it maps the polynomial $i$ in \eqref{eqn:1} to the $f$-polynomial its associated $h$-polynomial.  
\begin{lemma}
\label{lem: subdivision operator}
Suppose that $i\in\R[x]$ is a polynomial of degree $d$ and let
\[
\sum_{m\geq0}i(m)x^m = \frac{h}{(1-x)^{d+1}}.
\]
Then 
$
\E(i)(x) = f(h;x)
$
with respect to degree $d$.
\end{lemma}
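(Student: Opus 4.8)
The statement is a standard identity relating the Eilenberg--Zilber/subdivision operator to the $h$- and $f$-polynomials, so the plan is to verify it on a convenient basis and then extend by linearity. First I would recall the classical generating-function identity
\[
\sum_{m\geq 0}\binom{m+d-k}{d}x^m=\frac{x^k}{(1-x)^{d+1}},
\qquad 0\leq k\leq d,
\]
which one gets from the negative binomial series, and observe that the polynomials $\binom{m+d-k}{d}$ for $k=0,1,\dots,d$ form a basis of the space of polynomials in $m$ of degree at most $d$. Since everything in sight is linear in $i$, it suffices to prove the lemma when $i(m)=\binom{m+d-k}{d}$ for a fixed $k$: in that case the displayed hypothesis forces $h=x^k$, so I must check that $\E\!\left(\binom{m+d-k}{d}\right)=f(x^k;x)=(1+x)^d\left(\frac{x}{1+x}\right)^k=x^k(1+x)^{d-k}$.

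The computation of $\E\!\left(\binom{m+d-k}{d}\right)$ I would carry out by writing $\binom{m+d-k}{d}$ in the binomial basis $\binom{m}{j}$, $0\le j\le d$, applying $\E$ termwise (which replaces $\binom{m}{j}$ by $x^j$), and recognizing the result. Concretely, the Vandermonde-type identity
\[
\binom{m+d-k}{d}=\sum_{j=0}^{d}\binom{d-k}{d-j}\binom{m}{j}
\]
gives $\E\!\left(\binom{m+d-k}{d}\right)=\sum_{j=0}^{d}\binom{d-k}{d-j}x^j=\sum_{j}\binom{d-k}{j-k}x^j=x^k\sum_{i\ge 0}\binom{d-k}{i}x^i=x^k(1+x)^{d-k}$, which is exactly $f(x^k;x)$. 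Extending linearly over $k$ from $0$ to $d$ then yields $\E(i)=f(h;x)$ for arbitrary $i$ of degree at most $d$ with associated $h$-polynomial $h$.

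Alternatively, and perhaps more cleanly, I would phrase the argument operator-theoretically: both sides of $\E(i)=f(h;x)$ depend linearly on $i$, the map $i\mapsto h$ defined by \eqref{eqn:1} is a linear isomorphism from degree-$\le d$ polynomials onto degree-$\le d$ polynomials, and the map $h\mapsto f(h;x)$ of \eqref{eqn: f-polynomial} is likewise linear; so it is enough to match the two composite linear maps $i\mapsto \E(i)$ and $i\mapsto f(h(i);x)$ on the single basis $\{i_k(m):=\binom{m+d-k}{d}\}_{k=0}^d$, which is the content of the previous paragraph. I do not anticipate a genuine obstacle here; the only point requiring care is bookkeeping with the binomial identities and the ranges of summation — in particular making sure the degree-$d$ truncation is respected so that $\binom{m}{j}$ with $j>d$ never appears (it does not, since $\binom{m+d-k}{d}$ has degree $d$ in $m$) — and correctly normalizing $f(x^k;x)=x^k(1+x)^{d-k}$ from \eqref{eqn: f-polynomial}.
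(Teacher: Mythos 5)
Your proof is correct. It differs from the paper's in the choice of decomposition: the paper expands $i$ in the binomial basis $\binom{m}{k}$, applies the identity $\sum_{m\geq 0}\binom{m}{k}y^m = y^k/(1-y)^{k+1}$ with $y=x/(1+x)$ to obtain the intermediate formula $\E(i)=\sum_{m\geq 0} i(m)\,x^m/(1+x)^{m+1}$, and then recognizes the right-hand side as $(1+x)^d h(x/(1+x))$ directly from the defining relation of $h$; you instead expand $i$ in the basis $\binom{m+d-k}{d}$, which is the preimage of the monomial basis $\{x^k\}$ under the linear isomorphism $i\mapsto h$, and verify $\E\bigl(\binom{m+d-k}{d}\bigr)=x^k(1+x)^{d-k}=f(x^k;x)$ by Vandermonde's convolution. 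The paper's route buys a reusable closed formula for $\E(i)$ as a Borel-type sum and avoids Vandermonde; yours makes the correspondence $i_k\leftrightarrow h=x^k\leftrightarrow f=x^k(1+x)^{d-k}$ completely explicit, which is arguably more transparent about why the $E_k^d$ basis appears later in the paper. Two small bookkeeping points you implicitly rely on and should note: the identity $\sum_{m\geq 0}\binom{m+d-k}{d}x^m=x^k/(1-x)^{d+1}$ holds because the polynomial $\binom{m+d-k}{d}$ vanishes at $m=0,\dots,k-1$ (its numerator contains the factor $0$ there), and linear independence of the $i_k$ follows from triangularity of their values at $m=0,1,\dots,d$ (or from the isomorphism $i\mapsto h$ itself).
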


\begin{proof}
Note that, by using the well-known identity
\[
\sum_{m\geq0}{m\choose k}y^m = \frac{y^k}{(1-y)^{k+1}}
\]
and setting $y = x/(1+x)$, one can deduce
\[
\E(i) = \sum_{m\geq0}i(m)\frac{x^m}{(1+x)^{m+1}}.
\]
Since, the right-hand-side of this equation is equal to $(1+x)^dh(x/(1+x))$, the result follows. 
\end{proof}

The subdivision operator has an associated product map, in regards to which the $\RR$-transformation given in equation~\eqref{eqn: S-transformation} is multiplicative.
The \emph{diamond product} of two polynomials $p,q\in\R[x]$ is defined by 
\[
p \diamond q(x) := \E\left( \E^{-1}(p)\cdot \E^{-1}(q)\right)(x)= \sum_{k\geq 0} \frac {p^{(k)}(x)} {k!} \frac {q^{(k)}(x)} {k!}x^k(x+1)^k. 
\]
For the second equality, see \cite{W92}. 
It follows from \cite[Lemma 4.3]{B06} that $\RR\circ \E = \E \circ \RR$, and consequently, 
\[
\RR(p\diamond q) = \RR(p)\diamond \RR(q).
\] 
In the following, we let $E_k^d := \E\left(x^k(x+1)^{d-k}\right)$, $0 \leq k \leq d$. 
Note that $\RR(E_k^d)=E_{d-k}^d$. 
Further let
\[
\E_d := \left\{ p= \sum_{i=0}^d c_i E_i^d : c_i \geq 0 \mbox{ for all } i \right\},
\quad
\mbox{and}
\quad 
\A_d := \left\{ p \in \E_d :  \RR_d(p) \prec p\right\}.
\]
By a theorem due to the first author we know that each $p \in \E_d$ is real-rooted:
\begin{proposition}
[Theorem 4.2 in \cite{B06}]
\label{fundE}
Each $p \in \E_d$ has all its zeros in the interval $[-1,0]$. 
Moreover $E_0^d\prec p\prec E_d^d$.
\end{proposition}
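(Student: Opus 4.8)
The plan is to establish both statements by induction on $d$, exploiting the recursive structure of the $E_k^d$ under the subdivision operator. First I would record the key recursion: since $\binom{x}{k}$ satisfies $\binom{x+1}{k} = \binom{x}{k} + \binom{x}{k-1}$, applying $\E$ shows that $E_k^d$ can be expressed in terms of $E_j^{d-1}$'s; more usefully, one has a first-order linear differential-type recursion relating $E_k^d$, $E_{k-1}^{d-1}$ and $E_k^{d-1}$ (this is exactly the content worked out in \cite{B06}). The base case $d=0$ is trivial, since $E_0^0 = 1$ has no zeros and the claim $E_0^0 \prec 1 \prec E_0^0$ holds vacuously under the convention $p \prec \alpha p$.

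The core of the argument is the inductive step, and the main engine is the following interlacing-preservation fact: if $p \prec q$ are real-rooted with nonnegative leading coefficients and nonpositive zeros, then suitable nonnegative combinations of $p$, $q$, and their "shifts" under $\E$ remain in interlacing position; this is where Proposition~\ref{cones} does the heavy lifting. Concretely, I would show that $E_{k-1}^{d-1} \prec E_k^{d-1}$ for all $k$ (the inductive hypothesis gives the extremal interlacings $E_0^{d-1} \prec E_k^{d-1} \prec E_{d-1}^{d-1}$, and a secondary induction on $k$ upgrades this to consecutive interlacing), and then that the recursion expressing $E_k^d$ in terms of $E_{k-1}^{d-1}$ and $E_k^{d-1}$ produces, for each fixed $d$, a chain $E_0^d \prec E_1^d \prec \cdots \prec E_d^d$. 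Once this chain of interlacings is in hand, every $p = \sum_i c_i E_i^d$ with $c_i \geq 0$ is a nonnegative combination of polynomials forming an interlacing chain, hence real-rooted by the standard fact that such combinations are real-rooted (again Proposition~\ref{cones}, applied repeatedly, together with the observation that $q \prec r$ and $q \prec s$ with $r,s$ in the cone above $q$ gives $q \prec r+s$). The zero-location claim (zeros in $[-1,0]$) then follows because each $E_k^d$ has its zeros in $[-1,0]$ — this can be seen directly from $E_k^d = \E(x^k(x+1)^{d-k})$ via the generating-function identity in the proof of Lemma~\ref{lem: subdivision operator}, or propagated through the recursion — and a nonnegative combination of polynomials with all zeros in a common interval, pairwise interlacing, again has all zeros in that interval. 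Finally, $E_0^d \prec p \prec E_d^d$ for any such $p$ follows from the extremal position of $E_0^d$ and $E_d^d$ in the interlacing chain together with convexity of the cones in Proposition~\ref{cones}: $E_0^d \prec E_i^d$ for every $i$ gives $E_0^d \prec \sum_i c_i E_i^d$, and symmetrically for the upper bound.

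I expect the main obstacle to be the bookkeeping in the inductive step — specifically, verifying that the precise recursion for $E_k^d$ (with its differentiation terms, since $\E$ intertwines with a differential operator as visible in the diamond-product formula) has coefficients of the correct sign so that Proposition~\ref{cones} applies cleanly, and handling the boundary indices $k=0$ and $k=d$ where one of the "shift" terms degenerates. The interlacing $\RR_d$-symmetry $\RR(E_k^d) = E_{d-k}^d$ should be used to cut the casework roughly in half. None of the individual steps is deep, but assembling the two nested inductions (on $d$, and on $k$ within fixed $d$) so that the interlacing chain is produced in the right order is where care is needed; I would lean on the already-cited \cite{B06} for the exact form of the recursion rather than re-deriving it.
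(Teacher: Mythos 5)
The paper states this proposition as a citation of \cite[Theorem 4.2]{B06} without proof, so there is no in-paper argument to compare yours against; the remarks below assess your sketch on its own merits.

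Your overall strategy---induction on $d$, a chain of interlacings $E_0^d \prec E_1^d \prec \cdots \prec E_d^d$ built from the recursive structure of the $E_k^d$, and then a cone argument---is broadly the right shape, but there are two genuine gaps. The smaller one: you claim a ``secondary induction on $k$'' upgrades the extremal interlacings $E_0^{d-1} \prec E_k^{d-1} \prec E_{d-1}^{d-1}$ to the consecutive interlacings $E_{k-1}^{d-1}\prec E_k^{d-1}$, but consecutive interlacing does not follow from the extremal data and you do not say how the induction would run. It is also unnecessary: since $E_{k-1}^d = (x+1)\diamond E_{k-1}^{d-1}$ and $E_k^d = x \diamond E_{k-1}^{d-1}$, the consecutive interlacings at level $d$ already drop out of $(x+1) \prec x$ and the interlacing-preservation property of $\diamond$, requiring only that $E_{k-1}^{d-1}$ be real-rooted with zeros in $[-1,0]$, which is exactly what the level-$(d-1)$ induction hypothesis supplies. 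The more serious gap is the passage from the chain $E_0^d \prec E_1^d \prec \cdots \prec E_d^d$ to $E_0^d \prec E_i^d$ for \emph{every} $i$, which is what you actually feed into Proposition~\ref{cones}. The relation $\prec$ is not transitive, so a chain of consecutive interlacings does not give pairwise interlacing for free, and your sketch never bridges that. The paper's own proof of Proposition~\ref{poset} makes the missing ingredient visible: it uses the cyclic identity $(x+1)E_d^d = xE_0^d$ to obtain $E_0^d \prec E_d^d$ and then invokes \cite[Lemma~2.3]{B06} to recover pairwise interlacing from the chain. But that proof of Proposition~\ref{poset} already cites the present proposition, so inside your induction you cannot appeal to it; you would need to establish $E_0^d \prec E_i^d$ and $E_i^d \prec E_d^d$ for all $i$ by a separate argument (e.g.\ closing the chain with the cyclic identity and an Obreschkoff-type sandwiching lemma) before the cone step becomes available.
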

The next proposition says that $\prec$ is a partial order on (monic polynomials in) $\E_d$. 
\begin{proposition}\label{poset}
The interlacing relation $\prec$ is a partial order on monic polynomials in $\E_d$, with minimal element $E_0^d/d!$ and maximal element $E_d^d/d!$. 
\end{proposition}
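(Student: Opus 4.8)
The plan is to check the three defining properties of a partial order — reflexivity, antisymmetry, and transitivity — for the relation $\prec$ restricted to monic polynomials in $\E_d$, and then to identify the extremal elements using Proposition~\ref{fundE}. Reflexivity is immediate: for any real-rooted $p$ we have $p \prec p$ (this is listed among the consequences of the Hermite-Biehler theorem, being the case $\alpha = 1$ of $p \prec \alpha p$), and by Proposition~\ref{fundE} every $p \in \E_d$ is real-rooted, so $p \prec p$ holds in particular for every monic $p \in \E_d$.

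For transitivity, suppose $p, q, r$ are monic polynomials in $\E_d$ with $p \prec q$ and $q \prec r$. Since all three lie in $\E_d$, Proposition~\ref{fundE} tells us each has only nonpositive zeros (indeed zeros in $[-1,0]$) and positive leading coefficient; in particular their zeros are genuinely interlacing and the chain of inequalities $\cdots \le \gamma_2 \le \beta_2 \le \alpha_2 \le \gamma_1 \le \beta_1 \le \alpha_1$ among the zeros of $p$, $q$, $r$ forces $p \prec r$ — one can phrase this via the Remark following the definition of $\prec$, which translates $p \prec q$ (for polynomials with positive leading coefficients) into exactly such an interleaving of zeros. Alternatively, and perhaps more cleanly, one uses Proposition~\ref{cones}: from $p \prec q$ and $q \prec r$ one gets $q \prec r$ and, since the cone $\{s : s \prec r\}$ is convex and $p$ interlaces something already interlacing $r$, a short argument with the diamond product or with $xp \prec$-manipulations of the type carried out in the proof of Theorem~\ref{thm: interlacing TFAE} yields $p \prec r$. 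I would present the zero-interleaving version since it is the most transparent.

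For antisymmetry, suppose $p, q \in \E_d$ are monic with $p \prec q$ and $q \prec p$. Then the zeros of $p$ and $q$ interlace from both sides, which pins down, for each $i$, the inequalities $\alpha_i \le \beta_i \le \alpha_i$ (where $\alpha_1 \ge \alpha_2 \ge \cdots$ and $\beta_1 \ge \beta_2 \ge \cdots$ are the zeros of $p$ and $q$); hence $p$ and $q$ have the same multiset of zeros, and being monic of the same degree they are equal. Finally, for the extremal elements: Proposition~\ref{fundE} states $E_0^d \prec p \prec E_d^d$ for all $p \in \E_d$; dividing through by $d!$ (using that $p \prec q \iff \alpha p \prec \alpha q$ for $\alpha > 0$) gives $E_0^d/d! \prec p/\!\parallel\!p\parallel \prec E_d^d/d!$ for the monic rescaling of $p$, so $E_0^d/d!$ is the minimum and $E_d^d/d!$ is the maximum of the poset.

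The main obstacle is transitivity: $\prec$ on arbitrary real-rooted polynomials need not be transitive, and the fact that it becomes transitive on $\E_d$ relies essentially on Proposition~\ref{fundE}, namely that all members of $\E_d$ have their zeros confined to the common interval $[-1,0]$ with positive leading coefficients, which rules out the sign and degeneracy pathologies that break transitivity in general. I would make sure the write-up flags that this is where $\E_d$ (rather than an arbitrary set of real-rooted polynomials) is being used.
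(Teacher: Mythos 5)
Reflexivity, antisymmetry, and the identification of the extremal elements are handled correctly and match the paper, which treats these as routine (its written proof consists entirely of establishing transitivity). Your transitivity argument, however, has a genuine gap. You assert that $p\prec q$ and $q\prec r$, combined with the zeros lying in $[-1,0]$ and positive leading coefficients, force the full interleaving chain $\cdots\le\gamma_2\le\beta_2\le\alpha_2\le\gamma_1\le\beta_1\le\alpha_1$ and hence give $p\prec r$. This does not follow: the two hypotheses control $p$ versus $q$ and $q$ versus $r$ separately, but leave the relative position of $\gamma_1$ (the largest zero of $p$) and $\alpha_2$ (the second-largest zero of $r$) undetermined; both are merely squeezed between $\beta_2$ and $\beta_1$, in either order. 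Concretely, the monic quadratics $p,q,r$ with zero multisets $\{-1,-0.7\}$, $\{-0.8,-0.2\}$, $\{-0.3,-0.1\}$ have all zeros in $[-1,0]$ and satisfy $p\prec q$ and $q\prec r$ in the paper's sign convention, yet the zeros of $p$ and $r$ do not even interlace. So confinement of zeros to $[-1,0]$ with positive leading coefficients is not what makes $\prec$ transitive on $\E_d$, contrary to the reason you flag at the end.

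What the paper actually uses is the cone structure of $\E_d$ in an essential way: Proposition~\ref{fundE} gives the two-sided bound $E_0^d\prec p\prec E_d^d$ for every $p\in\E_d$, the identity $(x+1)E_d^d=xE_0^d$ gives $E_0^d\prec E_d^d$, and then \cite[Lemma 2.3]{B06} --- which says that a chain $f_0\prec f_1\prec\cdots\prec f_n$ whose two endpoints also satisfy $f_0\prec f_n$ is automatically totally interlacing --- is applied to the chain $E_0^d\prec p\prec q\prec r\prec E_d^d$. It is the existence of a common interlacing lower and upper bound (which themselves interlace) that rescues transitivity, not the location of the zeros. Your alternative suggestion via Proposition~\ref{cones} is left unspecified (``a short argument \ldots\ yields $p\prec r$'') and does not fill the gap either.
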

\begin{proof}
We need to prove transitivity. 
If $f, g, h \in \E_d$, $f \prec g$ and $g \prec h$, then 
$$
E_0^d \prec f \prec g \prec h \prec E_d^d
$$
by Proposition \ref{fundE}. Moreover, since $(x+1)E_d^d = xE_0^d$ (\cite[Lemma 4.5]{B06}) we have 
$E_0^d \prec E_d^d$. The proof now follows from \cite[Lemma 2.3]{B06}. 
\end{proof}
The main goal of the coming results is to understand when polynomials in $\E_d$ are also in $\A_d$.  
The next lemma proves that $\tilde{a}$ in the $\RR$-decomposition of a polynomial $f \in \E_d$ is real-rooted; a condition which is necessary for $f$ to satisfy the interlacing properties in Theorem~\ref{thm: interlacing TFAE} by way of Lemma~\ref{lem: relationship between I and S decompositions}.
\begin{lemma}
\label{lem: real-rooted a polynomial}
Let $p= \sum_{k=0}^d a_kx^k(x+1)^{d-k}$ where $a_k \geq 0$ for all $k$. 
If $(\tilde{a},\tilde{b})$ is the $\RR$-decomposition of $\E(p)$, then $\tilde{a}$ is real-rooted and $\tilde{a} \prec (x+1)E_d^d$.
\end{lemma}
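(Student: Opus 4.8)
The plan is to compute $\tilde a$ explicitly in terms of the diamond product. Recall that $\E(p) = \sum_k a_k E_k^d$ by definition of $E_k^d$, and that $\RR_d$ is linear with $\RR_d(E_k^d) = E_{d-k}^d$. By Lemma~\ref{lem: symmetric S-decomposition}, $\tilde a = (1+x)\E(p) - x\RR_d(\E(p))$, so
\[
\tilde a = \sum_{k=0}^d a_k\bigl( (1+x)E_k^d - x E_{d-k}^d \bigr).
\]
The key step is to recognize each summand as (up to a positive factor) a member of $\E_d$ or $(x+1)\E_d$, so that Propositions~\ref{fundE} and~\ref{cones} apply. Concretely, I would look for a diamond-product identity: since $E_k^d = \E(x^k(x+1)^{d-k})$ and the diamond product corresponds to multiplication after applying $\E^{-1}$, one expects $(1+x)E_k^d$ and $xE_{d-k}^d$ to be expressible via $E^{d+1}_j$'s or via $\diamond$ with a fixed degree-one polynomial. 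The cleanest route is probably to use the identities already cited from \cite{B06}, namely $(x+1)E_d^d = xE_0^d$ and more generally the recursions relating $E_k^d$ to $E_k^{d+1}$ and $E_{k+1}^{d+1}$, to show that $(1+x)E_k^d - xE_{d-k}^d$ is a nonnegative combination of the $E_j^{d+1}$ (equivalently lies in $(x+1)\E_d$ after dividing, or directly in $\E_{d+1}$-type cone). Granting that, $\tilde a$ is a nonnegative combination of such polynomials, hence real-rooted by Proposition~\ref{fundE} (applied in the appropriate degree), and the cone structure of Proposition~\ref{cones} together with the uniform bound $E_0\prec p\prec E_d$ yields $\tilde a \prec (x+1)E_d^d$.

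For the interlacing bound specifically, I would argue as follows. Each $(1+x)E_k^d - xE_{d-k}^d$ should satisfy $\;\cdot\; \prec (x+1)E_d^d$: indeed $(1+x)E_k^d \prec (1+x)E_d^d$ by multiplying the relation $E_k^d \prec E_d^d$ (Proposition~\ref{fundE}) by the polynomial $(1+x)$, which has a single real zero and preserves $\prec$ in the right direction; meanwhile $-xE_{d-k}^d \prec (1+x)E_d^d$ because $(1+x)E_d^d = xE_0^d$ and $E_0^d \prec E_{d-k}^d$ gives $xE_0^d \prec xE_{d-k}^d$, i.e. $-xE_{d-k}^d \prec xE_0^d = (1+x)E_d^d$. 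Summing these via the convex-cone property (Proposition~\ref{cones}) with the nonnegative weights $a_k$ gives $\tilde a \prec (x+1)E_d^d$, and real-rootedness of $\tilde a$ is then automatic since interlacing a (real-rooted) polynomial forces real-rootedness.

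The main obstacle I anticipate is verifying that $(1+x)E_k^d - xE_{d-k}^d$ really is a nonnegative combination of polynomials of the form $E_j^{d+1}$ (or otherwise manifestly in the relevant cone) — the cancellation in $(1+x)E_k^d - xE_{d-k}^d$ is not obviously sign-definite at the level of coefficients, and one genuinely needs the combinatorial/operator identities for $\E$ on $x^k(x+1)^{d-k}$ rather than a brute-force coefficient comparison. An alternative if that identity is awkward: apply $\E^{-1}$ to $\tilde a$, landing in $\R[x]$, and show $\E^{-1}(\tilde a) = \sum_k a_k\bigl((1+x)x^k(x+1)^{d-k} - \text{(something)}\bigr)$ is, after the substitution, a nonnegative combination of $x^j(x+1)^{d+1-j}$, then reapply $\E$ and invoke Proposition~\ref{fundE} in degree $d+1$; this reduces the whole problem to an identity among ordinary polynomials in $x$ and $x+1$, which is routine to check. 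Either way, once the cone membership is established, the real-rootedness and the interlacing with $(x+1)E_d^d$ follow formally from the results already in hand.
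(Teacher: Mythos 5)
Your direct argument (the paragraph beginning ``For the interlacing bound specifically'') is exactly the paper's proof, done term-by-term on the $E_k^d$'s rather than in aggregate on $q=\E(p)$: write $\tilde a=(1+x)\E(p)-x\RR(\E(p))$, show each of $(1+x)E_k^d$ and $-xE_{d-k}^d$ interlaces $(x+1)E_d^d$ using Proposition~\ref{fundE} together with the identity $(x+1)E_d^d=xE_0^d$, and conclude via the convex-cone Proposition~\ref{cones}. The speculative framing around expressing $\tilde a$ as a nonnegative combination of $E_j^{d+1}$'s is unnecessary --- the obstacle you rightly flag there never needs to be resolved, since the direct interlacing argument already yields $\tilde a\prec(x+1)E_d^d$ and hence real-rootedness of $\tilde a$.
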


\begin{proof}
Let $q = \E(p)$.  Then $(x+1)q \prec (x+1)E_d^d$ by Proposition \ref{fundE}, and 
\[
(x+1)E_d^d = xE_0^d\prec x\RR(q),
\]
where the equality follows from \cite[Lemma 4.5]{B06}, and the interlacing statement follows from Proposition \ref{fundE} since $\RR(q) \in \E_d$.  
This implies  $-x\RR(q)\prec(x+1)E_d^d$. 
Therefore, 
\[
\tilde{a}=(x+1)q-x\RR(q)\prec(x+1)E_d^d,
\]
by Proposition \ref{cones}, which completes the proof.
\end{proof}

Analogously, by way of Lemma~\ref{lem: symmetric S-decomposition}, the following lemma provides sufficient conditions for the polynomial $\tilde{b}$ in the $\RR$-decomposition of a polynomial $f$ to be real-rooted.  
\begin{lemma}
\label{lem: pair sums of E_i^d}
If $k+\ell \geq d$, then $p=E_k^d + E_\ell^d \in \A_d$. 
Moreover,
\[
E_{0}^{d-1} \prec \RR(p)-p \prec E_{d-1}^{d-1}.
\]
\end{lemma}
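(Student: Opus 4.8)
The plan is to verify that $p = E_k^d + E_\ell^d$ lies in $\A_d$ by first showing $p \in \E_d$ (immediate from the definition, since $p$ is a nonnegative combination of the $E_i^d$), and then establishing the interlacing $\RR_d(p) \prec p$. Using $\RR(E_i^d) = E_{d-i}^d$ we have $\RR_d(p) = E_{d-k}^d + E_{d-\ell}^d$. So the task reduces to proving
\[
E_{d-k}^d + E_{d-\ell}^d \prec E_k^d + E_\ell^d,
\]
and I expect the hypothesis $k + \ell \geq d$ to enter precisely here, as it controls the relative ordering of the four indices $d-k, d-\ell, k, \ell$.

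First I would reduce to a two-term interlacing statement. By Proposition~\ref{fundE}, each $E_i^d$ is real-rooted with zeros in $[-1,0]$, and $E_0^d \prec E_i^d \prec E_d^d$; moreover $\prec$ is a partial order on the monic polynomials of $\E_d$ by Proposition~\ref{poset}. The key sub-claim is that for indices $i \le j$ with $i + (\text{something}) \ge d$ one has an interlacing $E_i^d + E_j^d$ in the desired position relative to its $\RR$-image; concretely I would try to show $E_{d-j}^d \prec E_i^d$ and $E_{d-i}^d \prec E_j^d$ whenever $i + j \ge d$ (equivalently $d - j \le i$ and $d - i \le j$), presumably by invoking a monotonicity of the chain $E_0^d \prec E_1^d \prec \cdots \prec E_d^d$ — this chain interlacing is the natural fact to extract from \cite{B06} and Proposition~\ref{poset}. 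Once both $E_{d-k}^d \prec E_\ell^d$ and $E_{d-\ell}^d \prec E_k^d$ are in hand (after sorting $k,\ell$ so $k \le \ell$, which gives $d - \ell \le d - k$), Proposition~\ref{cones} (the convex-cone property of $\{q : q \prec p\}$, applied appropriately after also using $E_{d-k}^d \prec E_k^d$ or a cross term) lets me add the two interlacings to conclude $\RR_d(p) \prec p$, i.e. $p \in \A_d$.

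For the "moreover" part, I would apply Lemma~\ref{lem: pair sums of E_i^d}'s own conclusion together with Lemma~\ref{lem: symmetric S-decomposition}: the $\tilde b$-polynomial in the $\RR_d$-decomposition of $p$ is $\tilde b = \RR_d(p) - p$, which is symmetric with respect to $d-1$ and has nonnegative coefficients once $\RR_d(p) \prec p$ is known (by the standard consequence that $p \prec q$ with matching leading signs forces coefficientwise control, mirroring Remark~\ref{rmk: alternatingly increasing connection}). To pin down $E_0^{d-1} \prec \tilde b \prec E_{d-1}^{d-1}$, I would note that $\tilde b \in \E_{d-1}$: indeed $\RR_d(p) - p$ should expand as a nonnegative combination of the $E_i^{d-1}$ because of the cancellation forced by $k+\ell \ge d$, and then Proposition~\ref{fundE} applied in degree $d-1$ gives the two-sided interlacing automatically. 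The one delicate point is checking that $\tilde b$ genuinely has nonnegative coordinates in the $E_i^{d-1}$-basis; I would do this by an explicit computation of $E_k^d + E_\ell^d - E_{d-k}^d - E_{d-\ell}^d$ in terms of the binomial-transform data, using $\E^{-1}(E_i^d) = x^i(x+1)^{d-i}$ and the relations $x^i(x+1)^{d-i} - x^{d-i}(x+1)^i$ factoring through $x^{\min} (x+1)^{\min}$ times a difference of powers of $x$ and $x+1$.

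The main obstacle I anticipate is the two-term interlacing sub-claim $E_{d-j}^d \prec E_i^d$ for $i + j \ge d$: it is not simply the chain order $E_0^d \prec \cdots \prec E_d^d$ since $d - j$ need not be $\le i$ in a way that is visible without the hypothesis, and the Wronskian-sign bookkeeping in $\prec$ (as opposed to mere interlacing) requires care. I would resolve it by going back to the diamond-product structure — since $\RR$ is multiplicative for $\diamond$ and $E_k^d = \E(x^k(x+1)^{d-k})$ is a $\diamond$-product of the building blocks $E_1^1 = \E(x(x+1)^0)$-type factors — and reducing the claim to the base case $d = 1$ (where $E_0^1 = x+1$, $E_1^1$ is computed directly) propagated by \cite[Lemma 2.3]{B06} or the cone lemma; alternatively, a direct induction on $d$ peeling off one factor of $x$ or $(x+1)$ from $\E^{-1}$ and using that $\E$ is compatible with $\prec$ on $\E_d$. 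Either route keeps the computations light and isolates exactly where $k + \ell \ge d$ is needed.
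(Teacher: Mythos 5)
Your reduction of the main claim to the two-term relations $E_{d-\ell}^d \prec E_k^d$ and $E_{d-k}^d \prec E_\ell^d$ (for $k\le\ell$, $k+\ell\ge d$) contains the real gap: Proposition~\ref{cones} lets you add interlacings only when one side is held \emph{fixed} --- it gives $p_1+p_2\prec q$ from $p_1\prec q$ and $p_2\prec q$ --- and it does not allow you to add two different pairs $p_1\prec q_1$, $p_2\prec q_2$ to conclude $p_1+p_2\prec q_1+q_2$; that implication is false in general. To run the cone argument you would need, say, $E_{d-k}^d\prec E_k^d+E_\ell^d$, and the route you indicate for this (``also using $E_{d-k}^d \prec E_k^d$'') relies on a false statement whenever $k<d/2$ (e.g.\ $d=10$, $k=2$, $\ell=9$: $E_8^{10}\not\prec E_2^{10}$, the relation goes the other way). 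Your ``anticipated obstacle'' paragraph addresses only the two-term sub-claim, which is in fact the unproblematic part (it follows from \cite[Theorem 4.6]{B06} exactly as you say); the summation step is where the argument breaks. The paper avoids this entirely by inducting on $d$ on the whole sum: when $k+\ell\ge d+1$ both indices are positive, so $E_k^d+E_\ell^d=x\diamond(E_{k-1}^{d-1}+E_{\ell-1}^{d-1})$ and $\RR(E_k^d+E_\ell^d)=(x+1)\diamond\RR(E_{k-1}^{d-1}+E_{\ell-1}^{d-1})$; since $\diamond$ preserves $\prec$ in each coordinate and $x+1\prec x$, the inductive hypothesis yields $\RR(p)\prec p$, the base case being the symmetric situation $k+\ell=d$ where $\RR(p)=p$.

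For the ``moreover'' part your idea is sound and, if completed, would be a legitimate alternative to the paper's inductive argument: using $(x+1)^m-x^m=\sum_{j=0}^{m-1}x^j(x+1)^{m-1-j}$ one can check that $\E^{-1}(\RR(p)-p)$ is a difference of two consecutive blocks of terms $x^i(x+1)^{d-1-i}$, and $k+\ell\ge d$ forces the negative block to sit inside the positive one, so $\RR(p)-p\in\E_{d-1}$ and Proposition~\ref{fundE} in degree $d-1$ gives $E_0^{d-1}\prec\RR(p)-p\prec E_{d-1}^{d-1}$. But as written this computation is only promised, not performed, and in any case it does not repair the first (and main) assertion of the lemma.
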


\begin{proof}
The proof is by induction on the degree $d$, with the base case of $d=1$ being trivial.
Now suppose the result is true up to $d-1$. 
If $k+\ell=d$, then $\RR(E_k^d + E_\ell^d)= E_k^d + E_\ell^d$, and we are done. 
Suppose $k+\ell \geq d+1$. Then $k, \ell \geq 1$ and 
\begin{align*}
E_k^d + E_\ell^d
&= \E\left( x \cdot \left( x^{k-1}(x+1)^{d-1-(k-1)} + x^{\ell-1}(x+1)^{d-1-(\ell-1)}\right)\right), 	\\
&= x \diamond \left(E_{k-1}^{d-1} + E_{\ell-1}^{d-1} \right).	
\end{align*}
Since the diamond products preserves interlacing in both coordinates \cite{B04}, and since $k-1+\ell-1 \geq d-1$, we have by induction
\begin{align*}
\RR(E_k^d + E_\ell^d)	= (x+1) \diamond \RR(E_{k-1}^{d-1} + E_{\ell-1}^{d-1})
&\prec (x+1) \diamond  \left(E_{k-1}^{d-1} + E_{\ell-1}^{d-1} \right) \\
& \prec x\diamond  \left(E_{k-1}^{d-1} + E_{\ell-1}^{d-1} \right) = E_k^d + E_\ell^d.	
\end{align*}
The first part of the lemma now follows from Proposition \ref{poset}. 

Let $q= E_{k-1}^{d-1}+ E_{\ell-1}^{d-1}$. 
Then $p=x\diamond q$, as observed above, and 
\[
\RR(p)-p= (x+1)\diamond \RR(q)-x \diamond q = x\diamond (\RR(q)-q) +\RR(q).
\]
By induction $\RR(q)-q \prec E_{d-2}^{d-2}$, and hence $x \diamond (\RR(q)-q) \prec x\diamond E_{d-2}^{d-2}=E_{d-1}^{d-1}$, since $\diamond$ preserves interlacing \cite{B04}. 
Also, since $\RR(q) \prec E_{d-1}^{d-1}$, we have $\RR(p)-p \prec E_{d-1}^{d-1}$ by Proposition \ref{cones}. 
Similarly 
\[
\RR(p)-p= (x+1) \diamond (\RR(q)-q)+q.
\]
So again by induction, $E_0^{d-2} \prec \RR(q)-q$, and hence 
\[
E_0^{d-1} = (x+1)\diamond E_0^{d-2} \prec (x+1)\diamond (\RR(q)-q).
\]
The proof now follows from Proposition \ref{cones}, since $E_0^{d-1} \prec q$. 
\end{proof}

The remaining lemma in this subsection says that $\A_d$ is a filter in $\E_d$ (under the partial order $\prec$).  
This result will be applied to families of generating polynomials in the coming sections of this paper. 
\begin{lemma}
\label{lem: first properties of A_d}
If $p \in \A_d, q \in \E_d$, and $p \prec q$, then $q \in \A_d$.
\end{lemma}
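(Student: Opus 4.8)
The plan is to deduce $\RR_d(q)\prec q$ — which is exactly the assertion $q\in\A_d$ — from the chain
\[
\RR_d(q)\ \prec\ \RR_d(p)\ \prec\ p\ \prec\ q
\]
together with the fact that $\prec$ is a partial order on (monic polynomials in) $\E_d$, i.e.\ Proposition~\ref{poset}. Of the three links, the middle one $\RR_d(p)\prec p$ holds because $p\in\A_d$, and the last one $p\prec q$ is a hypothesis; so the real work is producing the first link $\RR_d(q)\prec\RR_d(p)$, plus checking that all four polynomials lie in $\E_d$ so that transitivity applies. For the latter: $\RR_d$ is linear and $\RR_d(E^d_k)=E^d_{d-k}$, so $\RR_d$ maps $\E_d$ bijectively onto itself and in particular $\RR_d(p),\RR_d(q)\in\E_d$. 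We may assume $p$ and $q$ are not identically zero (the zero case being trivial); by Proposition~\ref{fundE} every nonzero element of $\E_d$ has degree $d$ and positive leading coefficient, $\RR_d$ fixes the leading coefficient, and $\prec$ is unaffected by positive rescalings, so we may take $p,q,\RR_d(p),\RR_d(q)$ all monic and apply Proposition~\ref{poset} to them.

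The one substantive ingredient is that $\RR_d$ \emph{reverses} the order $\prec$ on $\E_d$: for $f,g\in\E_d$ we have $f\prec g$ if and only if $\RR_d(g)\prec\RR_d(f)$. I would prove this directly from the zeros. By Proposition~\ref{fundE} all zeros of $f$ and $g$ lie in $[-1,0]$; the substitution $x\mapsto -1-x$ is an order-reversing involution of $[-1,0]$ which sends the zero set of a degree-$d$ polynomial $h$ to the zero set of $\RR_d(h)$ and preserves the sign of the leading coefficient. Hence, reading off the interlacing pattern from the Remark following Theorem~\ref{thm: Hermite-Biehler}, this reflection simply reverses the linear order of the union of the zero sets of $f$ and $g$, and so converts the pattern witnessing $f\prec g$ into the one witnessing $\RR_d(g)\prec\RR_d(f)$. (Alternatively, this order-reversal can be extracted from \cite{B06} using $\RR\circ \E=\E\circ \RR$.)

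Granting this, the proof is immediate: $p\in\A_d$ gives $\RR_d(p)\prec p$; applying the order-reversal to $p\prec q$ gives $\RR_d(q)\prec\RR_d(p)$; and since $\RR_d(q),\RR_d(p),p,q$ are monic elements of $\E_d$, transitivity from Proposition~\ref{poset} applied along $\RR_d(q)\prec\RR_d(p)\prec p\prec q$ yields $\RR_d(q)\prec q$, i.e.\ $q\in\A_d$.

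The main obstacle is proving that $\RR_d$ reverses $\prec$ on $\E_d$; the rest is bookkeeping with Propositions~\ref{fundE} and~\ref{poset}. Some care is needed there with endpoint and repeated zeros and with the convention for the zero polynomial, and one should remember that the transitivity in Proposition~\ref{poset} is stated for monic polynomials, which is why the rescaling reduction is included.
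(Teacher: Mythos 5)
Your proof follows the same route as the paper: observe that $\RR_d$ reverses $\prec$, chain $\RR_d(q)\prec\RR_d(p)\prec p\prec q$ (using $p\in\A_d$ for the middle link), and close with transitivity from Proposition~\ref{poset}. The only difference is that you supply a justification, via the reflection $x\mapsto -1-x$ of zeros in $[-1,0]$, for the order-reversal fact that the paper simply asserts; that justification is correct.
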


\begin{proof}
To prove the claim, we will rely on the following fact: If $p$ and $q$ are two degree $d$ polynomials and $p \prec q$, then $\RR_d(q)\prec \RR_d(p)$.  Hence if $p \prec q$  and $\RR_d(p) \prec p$, then 
\[
\RR_d(q) \prec \RR_d(p) \prec p \prec q.
\]
The proof now follows from Proposition \ref{poset}.  
\end{proof}

\subsection{More real-rooted symmetric decompositions}
\label{subsec: more real-rooted symmetric decompositions}
A natural relaxation of Theorem~\ref{thm: interlacing TFAE} is to simply ask when the $\I$-decomposition $(a,b)$ of a polynomial $p$ is real-rooted, without regard to whether or not $b\prec a$.
In the context of algebraic combinatorics, we are particularly interested in this question when $p$ is regarded as an $h$-polynomial.    
Thus, our goal in this subsection is to prove a sufficient condition for an $h$-polynomial to have a real-rooted $\I$--decomposition.  
Specifically, we prove the following theorem:

\begin{theorem}
\label{thm: real-rooted I-decomposition}
Let 
\[
i= \sum_{j=0}^d c_j x^j(x+1)^{d-j},
\]
where $c_j \geq 0$ for all $0\leq j\leq d$, and 
\[
\sum_{m\geq0} i(m)x^m = \frac {h}{(1-x)^{d+1}}. 
\]
If 
\begin{equation}
\label{eqn: hibi}
c_0+c_1+\cdots+c_j \leq c_d+c_{d-1}+\cdots+c_{d-j}
\end{equation}
 for all $0\leq j \leq d/2$, then the $\I_d$-decomposition of $h$ is real-rooted.
\end{theorem}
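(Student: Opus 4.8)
The plan is to pass to $f$-polynomials, where the polynomials of interest live in the cones $\E_d$ and $\E_{d-1}$, on which real-rootedness is automatic. First I would set $f := f(h;x)$, the $f$-polynomial of $h$ with respect to $d$. Since $\sum_{m\ge 0} i(m)x^m = h/(1-x)^{d+1}$, Lemma~\ref{lem: subdivision operator} gives $f = \E(i) = \sum_{j=0}^d c_j E_j^d$, which lies in $\E_d$ because all $c_j \ge 0$. Let $(\tilde a,\tilde b)$ be the $\RR_d$-decomposition of $f$; by Lemma~\ref{lem: relationship between I and S decompositions}, $\tilde a$ and $\tilde b$ are the $f$-polynomials of $a$ and $b$ (with respect to $d$ and $d-1$). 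The $f$-polynomial map $p \mapsto (1+x)^{e}p(x/(1+x))$ on polynomials of degree at most $e$ is inverted by $q \mapsto (1-x)^{e}q(x/(1-x))$; factoring a real-rooted $q$ into real linear factors and substituting shows this inverse sends real-rooted polynomials to real-rooted polynomials (a root of $q$ at $-1$ merely drops the degree). Hence it suffices to show $\tilde a$ and $\tilde b$ are real-rooted.

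Real-rootedness of $\tilde a$ needs no hypothesis at all: applying Lemma~\ref{lem: real-rooted a polynomial} with $p = i$, and using $\E(i) = f$, gives that $\tilde a$ is real-rooted (indeed $\tilde a \prec (x+1)E_d^d$), hence $a$ is real-rooted.

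For $\tilde b$ I would compute, using $\RR_d(E_j^d)=E_{d-j}^d$, linearity of $\RR_d$, and the formula $\tilde b = \RR_d(f)-f$ from Lemma~\ref{lem: symmetric S-decomposition}, that
\[
\tilde b = \sum_{j=0}^d (c_{d-j}-c_j)\,E_j^d .
\]
The key step is then an Abel summation: with $\delta_i := c_{d-i}-c_i$, $D_j := \sum_{i=0}^j \delta_i$, and $D_{-1}=0$, $D_d = \sum_{i=0}^d \delta_i = 0$, one gets $\tilde b = \sum_{j=0}^{d-1} D_j\,(E_j^d - E_{j+1}^d)$. Now the telescoping identity $E_j^d - E_{j+1}^d = E_j^{d-1}$ — obtained by applying $\E$ to $x^j(x+1)^{d-j} - x^{j+1}(x+1)^{d-1-j} = x^j(x+1)^{d-1-j}$ — turns this into
\[
\tilde b = \sum_{j=0}^{d-1} D_j\,E_j^{d-1}.
\]
The hypothesis \eqref{eqn: hibi} says precisely that $D_j \ge 0$ for $0 \le j \le d/2$; together with the symmetry $D_{d-1-j} = D_j$ (immediate from $D_d=0$ and $\delta_{d-i} = -\delta_i$) this yields $D_j \ge 0$ for all $0 \le j \le d-1$. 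Hence $\tilde b \in \E_{d-1}$, so Proposition~\ref{fundE} gives that $\tilde b$ is real-rooted, and therefore $b$ is real-rooted, finishing the proof.

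Most of this is bookkeeping; the one genuinely non-obvious move, and the step I expect to be the crux, is to exploit the partial-sum hypothesis through Abel summation combined with the identity $E_j^d - E_{j+1}^d = E_j^{d-1}$, which is exactly what drops $\tilde b$ back into an $\E$-cone and makes its real-rootedness free. A secondary point to be careful about is that real-rootedness of $a$ and $b$ is deduced from that of $\tilde a$ and $\tilde b$ via the inverse $f$-polynomial transformation, whose harmless effect on real zeros should be recorded explicitly.
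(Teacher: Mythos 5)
Your proof is correct, and it takes a genuinely different (and cleaner) route for the $b$-polynomial than the paper does. The two arguments treat $\tilde a$ identically, via Lemma~\ref{lem: real-rooted a polynomial}. For $\tilde b$ the paper first shows that the inequalities~\eqref{eqn: hibi} allow $\E(i)$ to be written as a nonnegative combination of pair sums $E_k^d + E_\ell^d$ with $k+\ell \geq d$, invokes Lemma~\ref{lem: pair sums of E_i^d} (whose proof is an induction with diamond products) to get $E_0^{d-1} \prec \RR(p) - p \prec E_{d-1}^{d-1}$ for each pair sum, and then combines via Proposition~\ref{cones}. You instead carry out an Abel summation directly on $\tilde b = \sum_j (c_{d-j}-c_j) E_j^d$, use the telescoping identity $E_j^d - E_{j+1}^d = E_j^{d-1}$, and the symmetry $D_{d-1-j}=D_j$ to land $\tilde b$ squarely in the cone $\E_{d-1}$, after which Proposition~\ref{fundE} gives real-rootedness for free (and in fact the same interlacing bounds $E_0^{d-1} \prec \tilde b \prec E_{d-1}^{d-1}$). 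Your route avoids Lemma~\ref{lem: pair sums of E_i^d} entirely, which is a genuine simplification for this particular theorem; the paper's pair-sum lemma carries the extra conclusion $E_k^d + E_\ell^d \in \A_d$, which this theorem does not need. Your remark about the inverse $f$-transform $q \mapsto (1-x)^e q(x/(1-x))$ preserving real-rootedness is a point the paper leaves implicit, and it is good that you flagged and justified it explicitly.
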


\begin{proof}
Let $(a,b)$ be the $\I$--decomposition of $h$ given as in Theorem~\ref{thm: real-rooted I-decomposition}.
Also let $(\tilde{a},\tilde{b})$ be the $\RR$--decomposition of $i(x)$ and $(\tilde{A},\tilde{B})$ be the $\RR$--decomposition of $\E(i)$.  
Recall from Lemma~\ref{lem: subdivision operator} that $\E(i)$ and $h$ are related by 
\[
\E(i)(x) = (1+x)^{d}h\left(\frac{x}{1+x}\right).
\]
By Lemma~\ref{lem: relationship between I and S decompositions}, we also have that
\[
\tilde{A}(x)= (1+x)^da\left(\frac{x}{1+x}\right)
\quad 
\mbox{ and } 
\quad 
\tilde{B}(x)= (1+x)^{d-1}b\left(\frac{x}{1+x}\right).
\]
Thus, the real-rootedness of $a$ then follows from the real-rootedness of $\tilde{A}$ as guaranteed by Lemma~\ref{lem: real-rooted a polynomial}.  
As for the $b$ polynomial, if we know that 
\[
c_0+c_1+\cdots+c_j \leq c_d+c_{d-1}+\cdots+c_{d-j}
\]
for all $0\leq j \leq d/2$, we can apply Lemma~\ref{lem: pair sums of E_i^d}.  
These inequalities imply that $\E(i)$ is a nonnegative combination of the polynomials $E_k^d+E_j^d$ for $k+j\geq d$, from which it follows that $E_{0}^{d-1} \prec \RR\E(i)-\E(i) \prec E_{d-1}^{d-1}$ by Proposition \ref{cones}. 
By Lemma~\ref{lem: symmetric S-decomposition}, we know that $\tilde{B} = \RR\E(i)-\E(i)$, and therefore conclude that $b$ must be real-rooted as well.
\end{proof}

\begin{remark}
[On applications of Theorem~\ref{thm: real-rooted I-decomposition}]
\label{rem: on applications}
The inequalities~\eqref{eqn: hibi} assumed to hold for the coefficient sequence $c_0,\ldots,c_d$ in Theorem~\ref{thm: real-rooted I-decomposition} appear frequently in algebra and combinatorics.  
Any pure $O$-sequence is known to satisfy the inequalities $c_j\leq c_{d-j}$ for all $0\leq j\leq d$  \cite{H89}, and these together imply the inequalities~\eqref{eqn: hibi}.
Moreover, the inequalities~\eqref{eqn: hibi} are specifically known to hold for the $h$-vector of any semistandard graded Cohen-Macaulay domain, so in particular, for any Ehrhart $h^\ast$-polynomial \cite{S91}.   
Theorem~\ref{thm: real-rooted I-decomposition} provides a means by which to produce an $h$-polynomial with a real-rooted $\I$--decomposition from any such sequence.  
The interesting applications arise when the resulting $h$-polynomial has known combinatorial interpretations, as we will see in the coming sections.  
\end{remark}

\section{Colored Eulerian and Derangement Polynomials}
\label{sec: colored}
For positive integers $n$ and $r$ we say that a pair $(\pi,\emph{z})\in\mathfrak{S}_n\times \{0,1,\ldots,r-1\}^n$ is an $r$-\emph{colored permutation} of $[n]$, and we denote the collection of all $r$-colored permutations by $\Z_r\wr\mathfrak{S}_n$.  
Given $(\pi,\emph{z})\in\Z_r\wr\mathfrak{S}_n$, we think of $z_k$ in the $n$-tuple $\emph{z}$ as the color of $\pi_k$ in the permutation $\pi$. 
Following Steingr\'imsson \cite{S94}, we say that $k\in[p]$ is an \emph{excedance} of $(\pi,\emph{z})\in\Z_r\wr\mathfrak{S}_n$ if either $\pi_k>k$ or $\pi_k = k$ and $z_k>0$.  
The number of excedances of $(\pi,\emph{z})\in\Z_r\wr\mathfrak{S}_n$ is denoted $\exc(\pi,\emph{z})$.  
The \emph{Eulerian polynomial} for the $r$-colored permutations of $[n]$ is
\[
A_{n,r}(x) :=\sum_{(\pi,\emph{z})\in\Z_r\wr\mathfrak{S}_n}x^{\exc(\pi,\emph{z})} 
\]
for $n\geq 1$, and we set $A_{0,r}:=1$ for all $r\geq1$. 
An $r$-colored permutation having no fixed points of color $0$ (i.e., no indices $k$ for which $\pi_k = k$ and $z_k = 0$) is called a \emph{derangement} and the collection of all derangements in $\Z_r\wr\mathfrak{S}_n$ is denoted by $\mathfrak{D}_{n,r}$.  
The \emph{derangement polynomial} for the $r$-colored permutations of $[n]$ is
\[
d_{n,r}(x) :=\sum_{(\pi,\emph{z})\in\mathfrak{D}_{n,r}}x^{\exc(\pi,\emph{z})}
\]
for $n\geq1$, and we set $d_{0,r} :=1$ for all $r\geq1$. 
When $r = 1$, we recover the classical  $n^{th}$ \emph{Eulerian polynomial} and  $n^{th}$ \emph{derangement polynomial}, which we will respectively denote by $A_n$ and $d_n$.  
The first few Eulerian and derangement polynomials, respectively, are listed in Table~\ref{tab:1} and Table~\ref{tab:2}.  
\begin{table}
\caption{The Eulerian polynomials $A_n$ for $1\leq n\leq 6$.}
\label{tab:1}
\centering
\begin{tabular}{ l | l }
$n$	&	$A_n$	\\\hline
$1$	&	$1$	\\
$2$	&	$1+x$	\\
$3$	&	$1+4x+x^2$	\\
$4$	&	$1+11x+11x^2+x^3$	\\
$5$	&	$1+26x+66x^2+26x^3+x^4$	\\
$6$	&	$1+57x+302x^2+302x^3+57x^4+x^5$	\\
\end{tabular}
\end{table}
%
\begin{table}[b!]
\caption{The derangement polynomials $d_n$ for $1\leq n\leq 6$.}
\label{tab:2}
\centering
\begin{tabular}{ l | l }
$n$	&	$d_n$	\\\hline
$1$	&	$0$	\\
$2$	&	$x$	\\
$3$	&	$x+x^2$	\\
$4$	&	$x+7x^2+x^3$	\\
$5$	&	$x+21x^2+21x^3+x^4$	\\
$6$	&	$x+51x^2+161x^3+51x^4+x^5$	\\
\end{tabular}
\end{table}
In \cite{S94}, it is shown that $A_{n,r}$ are real-rooted for all $n\geq 1$ and $r\geq 1$.  
In \cite{B93}, \cite{CTZ09,C09}, and \cite{CM10}, respectively, it is also noted that $d_n$, $d_{n,2}$, and $d_{n,r}$ are real-rooted for $n\geq1$ and $r\geq 1$.  
In this section, we strengthen these results by showing that $A_{n,r}$ and $d_{n,r}$ are all interlaced by their own reciprocal for all $n\geq1$ and $r\geq1$.
By Theorem~\ref{thm: interlacing TFAE}, it then follows that $A_{n,r}$ and $d_{n,r}$ always have real-rooted $I$-decompositions, and thus are alternatingly increasing for all $n\geq1$ and $r\geq 1$.  
Finally, we apply these results to answer a question on the real-rootedness of certain local $h$-polynomials.

\subsection{The colored Eulerian polynomials}
\label{sec: the colored eulerian polynomials}
To prove the desired result, we first prove a slightly more general statement.
In the following, for $r\geq2$, $n\geq1$, and $0\leq k\leq n$ we let
\begin{equation}
\label{eqn: colored E}
E_{r,k}^n :=\E((rx)^k(rx+1)^{n-k}). 
\end{equation}
We also define the polynomials $A_{r,k}^n(x)$ by the relation
\begin{equation}
\label{eqn: partial eulerian}
\sum_{m\geq0}(rm)^k(rm+1)^{n-k}x^m = \frac{A_{r,k}^n(x)}{(1-x)^{n+1}}.
\end{equation}
Note that $A_{r,0}^n = A_{n,r}$ for all $n,r\geq 1$.  
We then have the following theorem.
\begin{theorem}
\label{thm: colored eulerian polynomials}
Suppose that $p\in\R[x]$ is polynomial satisfying
\[
p(x) = \sum_{r\geq2}\sum_{k=0}^nc_{r,k}A_{r,k}^n(x)
\]
for some $c_{r,k}\geq0$.
Then $\I_n(p)\prec p$, and in particular, $p$ is both real-rooted and alternatingly increasing.
\end{theorem}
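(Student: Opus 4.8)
The plan is to translate the statement into one about $f$-polynomials and apply Theorem~\ref{thm: interlacing TFAE}. By Lemma~\ref{lem: subdivision operator}, the $f$-polynomial of $A_{r,k}^n$ with respect to degree $n$ is exactly $E_{r,k}^n=\E((rx)^k(rx+1)^{n-k})$, so, by linearity of the $f$-transformation, $f:=f(p;x)=\sum_{r\geq 2}\sum_{k=0}^n c_{r,k}E_{r,k}^n$. Writing $rx+1=(x+1)+(r-1)x$ and expanding shows that $(rx)^k(rx+1)^{n-k}$ is a nonnegative combination of the polynomials $x^j(x+1)^{n-j}$ whenever $r\geq 2$; hence each $E_{r,k}^n$, and therefore $f$, lies in $\E_n$. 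A short binomial estimate (again using $r-1\geq 1$) shows that the coefficients of $E_{r,k}^n$ in the basis $\{E_j^n\}_{j=0}^n$, hence those of $f$, satisfy the inequalities~\eqref{eqn: hibi}; so by Theorem~\ref{thm: real-rooted I-decomposition}, together with Lemmas~\ref{lem: real-rooted a polynomial} and~\ref{lem: pair sums of E_i^d} from its proof, the $\I_n$-decomposition $(a,b)$ of $p$ consists of polynomials with nonnegative coefficients. Thus Theorem~\ref{thm: interlacing TFAE} and Remark~\ref{rmk: alternatingly increasing connection} apply, and it suffices to verify condition~(5) of that theorem, namely $\RR_n(f)\prec f$.

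The key step is the claim that
\[
\RR_n(E_{r,k}^n)\prec E_{r',k'}^n\qquad\text{for all }r,r'\geq 2\text{ and }0\leq k,k'\leq n,
\]
which I would prove by induction on $n$, the case $n=0$ being trivial. For the inductive step, the factorizations $(rx)^k(rx+1)^{n-k}=(rx)\cdot(rx)^{k-1}(rx+1)^{n-k}$ (for $k\geq 1$) and $(rx+1)^n=(rx+1)\cdot(rx+1)^{n-1}$, together with the identity $\E(gh)=\E(g)\diamond\E(h)$ (immediate from the definition of $\diamond$), give
\[
E_{r,k}^n=(rx)\diamond E_{r,k-1}^{n-1}\ \ (k\geq 1),\qquad E_{r,0}^n=(rx+1)\diamond E_{r,0}^{n-1}.
\]
Applying $\RR$, using $\RR(p\diamond q)=\RR(p)\diamond\RR(q)$ with $\RR_1(rx)=r(x+1)$ and $\RR_1(rx+1)=rx+r-1$, and invoking the inductive hypothesis together with the fact that $\diamond$ preserves $\prec$ in each coordinate~\cite{B04}, the claim for $n$ reduces to the four interlacing relations among linear polynomials
\[
r(x+1)\prec r'x,\qquad r(x+1)\prec r'x+1,\qquad rx+r-1\prec r'x,\qquad rx+r-1\prec r'x+1.
\]
Each is a one-line comparison of the two single (nonpositive) roots; the last one uses $\tfrac1r+\tfrac1{r'}\leq 1$, which is precisely where the hypothesis $r,r'\geq 2$ enters (indeed the claim fails for $r=1$, where $\RR_n(E_{1,k}^n)=E_{n-k}^n$). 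Chaining these facts with the inductive hypothesis requires transitivity of $\prec$, which is available because all polynomials that appear belong to $\E_n$ (Proposition~\ref{poset}).

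Granting the claim, the theorem follows by two applications of Proposition~\ref{cones}: for each fixed pair $(r',k')$, every $\RR_n(E_{r,k}^n)$ lies in the convex cone $\{q:q\prec E_{r',k'}^n\}$, so the nonnegative combination $\RR_n(f)=\sum_{r,k}c_{r,k}\RR_n(E_{r,k}^n)$ lies there too, i.e.\ $\RR_n(f)\prec E_{r',k'}^n$ for every $(r',k')$; consequently every $E_{r',k'}^n$ lies in the convex cone $\{q:\RR_n(f)\prec q\}$, so $f=\sum_{r',k'}c_{r',k'}E_{r',k'}^n$ lies there too, giving $\RR_n(f)\prec f$. The main obstacle I anticipate is organizing the inductive step cleanly — in particular keeping track of the degree parameter attached to $\RR$ and to $\diamond$, and checking that the reduction really does bottom out at the four linear interlacings above; once that bookkeeping is in place, the numerical inequalities and the cone assembly are routine.
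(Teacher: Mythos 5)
Your proposal is correct, and it reaches the paper's key intermediate fact --- that $f:=f(p;x)=\sum_{r,k}c_{r,k}E_{r,k}^n$ satisfies $\RR_n(f)\prec f$, i.e.\ $f\in\A_n$ --- by a genuinely different route. After the same expansion showing each $E_{r,k}^n\in\E_n$, the paper's proof is much shorter: it observes that $E_{2,0}^n\in\A_n$ because $A_{n,2}$ is real-rooted and symmetric (so $\RR_n(E_{2,0}^n)=E_{2,0}^n$), invokes \cite[Theorem 4.6]{B06} to compare the zeros of $(2x+1)^n$ (all equal to $-1/2$) with those of $(rx)^k(rx+1)^{n-k}$ (namely $-1/r$ and $0$, both $\geq -1/2$ when $r\geq 2$), concluding $E_{2,0}^n\prec E_{r,k}^n$, and then finishes with Proposition~\ref{cones} and the filter property of $\A_n$ (Lemma~\ref{lem: first properties of A_d}). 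You instead prove the stronger cross-interlacing $\RR_n(E_{r,k}^n)\prec E_{r',k'}^n$ directly by induction on $n$ via diamond-product factorizations --- exactly the technique used in the proof of Lemma~\ref{lem: pair sums of E_i^d} --- reducing to four interlacings of linear forms, the last of which ($1/r+1/r'\leq 1$) is precisely where $r,r'\geq 2$ enters; the final double cone assembly is common to both arguments. Your claim is consistent with the paper's (it also follows from $\RR_n(E_{r,k}^n)\prec \RR_n(E_{2,0}^n)=E_{2,0}^n\prec E_{r',k'}^n$ and transitivity), and your induction is sound given the interlacing-preservation of $\diamond$ and transitivity of $\prec$ inside $\E_n$, both already relied on in the paper. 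The paper's route buys brevity; yours buys self-containedness --- it avoids both the external monotonicity theorem \cite[Theorem 4.6]{B06} and the a priori symmetry and real-rootedness of the type-B Eulerian polynomial, and it isolates exactly where $r\geq 2$ is used. One small inaccuracy: Theorem~\ref{thm: real-rooted I-decomposition} and the lemmas in its proof give real-rootedness of the $\I_n$-decomposition, not nonnegativity of its coefficients; but the facts actually needed to run (5)$\Rightarrow$(4)$\Rightarrow$(1) of Theorem~\ref{thm: interlacing TFAE} are that $p$ and $\I_n(p)$ have only nonpositive zeros, which follows from $f\in\E_n$, and the paper's own proof elides this hypothesis check in the same way.
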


\begin{proof}
Notice first that 
\begin{equation*}
\begin{split}
(rx)^k(rx+1)^{n-k} 
	&= (rx)^k((r-1)x+(x+1))^{n-k},\\
	&=\sum_{j=0}^{n-k}{n-k\choose j}r^k(r-1)^jx^{k+j}(x+1)^{n-k-j}.\\
\end{split}
\end{equation*}
Thus, $\E((rx)^k(rx+1)^{n-k})\in\E_n$ for all $r\geq2$.  
So by Lemma~\ref{lem: first properties of A_d} and Proposition \ref{cones}, it suffices to find a polynomial $q\in\A_n$ such that $q\prec \E((rx)^k(rx+1)^{n-k})$ for all $r\geq2$.  
To this end, we recall that $A_{n,2}$ is well-known to be real-rooted and symmetric with respect to $n$ \cite{S94}.  
Thus, it holds that $E_{2,0}^n\in\A_n$.  
By \cite[Theorem 4.6]{B06}, if two polynomials $p,q$ with zeros $\alpha_1\leq \alpha_2 \leq \cdots \leq \alpha_n$ and $\beta_1\leq \beta_2 \leq \cdots \leq \beta_n$, respectively, that are all in the interval $[-1,0]$ satisfy $\alpha_k\leq \beta_k$ for all $k\in\{0,\ldots, n\}$ then $\E(p)\prec \E(q)$. 
Hence, it follows that $E_{2,0}^n\prec E_{r,k}^n$ for all $r\geq2$ and $0\leq k\leq n$. 
So the result follows.
\end{proof}

As an immediate corollary to Theorem~\ref{thm: colored eulerian polynomials}, we recover that the colored Eulerian polynomial $A_{n,r}(x)$ is interlaced by its own reciprocal.
\begin{corollary}
\label{cor: colored eulerian polynomials}
For all positive integers $n$ and $r$, the colored Eulerian polynomial $A_{n,r}$ satisfies
\[
\I_n(A_{n,r})\prec A_{n,r}.
\]
In particular, $A_{n,r}$ is both real-rooted and alternatingly increasing. 
\end{corollary}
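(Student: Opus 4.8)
The plan is to derive Corollary~\ref{cor: colored eulerian polynomials} as a direct specialization of Theorem~\ref{thm: colored eulerian polynomials}, with only a small amount of bookkeeping to verify the hypotheses. First I would recall that $A_{n,r} = A_{r,0}^n$ by the defining relation~\eqref{eqn: partial eulerian}, so that $A_{n,r}(x)$ is, trivially, of the form $\sum_{r'\geq 2}\sum_{k=0}^n c_{r',k}A_{r',k}^n(x)$ with nonnegative coefficients $c_{r',k}$: for $r\geq 2$ one simply takes $c_{r,0}=1$ and all other $c_{r',k}=0$. Theorem~\ref{thm: colored eulerian polynomials} then immediately gives $\I_n(A_{n,r})\prec A_{n,r}$, and hence (by the definition of $\prec$ and Remark~\ref{rmk: alternatingly increasing connection}, via statement (4) of Theorem~\ref{thm: interlacing TFAE}) that $A_{n,r}$ is real-rooted and alternatingly increasing.

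The one genuine case to address separately is $r=1$, since Theorem~\ref{thm: colored eulerian polynomials} only permits summands with $r'\geq 2$. Here I would argue that $A_{n,1}=A_n$, the classical Eulerian polynomial, can still be realized as a nonnegative combination of the $A_{r',k}^n$ with $r'\geq 2$. The cleanest route is to observe that $A_{n,1}=A_{2,0}^n\big|$ — no, more carefully: expand $(1\cdot x + 1)^n$, or rather use that the Eulerian polynomial satisfies $\sum_{m\geq 0}(m+1)^n x^m = A_n(x)/(1-x)^{n+1}$ after a standard index shift, while $A_{2,k}^n$ is built from $\sum_{m\geq 0}(2m)^k(2m+1)^{n-k}x^m$. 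Setting $k=0$ gives $\sum_{m\geq 0}(2m+1)^n x^m$; writing $2m+1 = 2(m+1)-1$ and binomially expanding expresses $(2m+1)^n$ as a nonnegative-coefficient combination of $(m+1)^j$ only up to signs, so this needs care. The robust fix is instead to note that the argument of Theorem~\ref{thm: colored eulerian polynomials} works verbatim for $r=1$ as well: $\E((1\cdot x)^k(1\cdot x+1)^{n-k}) = E_k^n \in \E_n$, and one still has $E_{2,0}^n \prec E_k^n$ since $E_{2,0}^n\in\A_n$ and, by Proposition~\ref{fundE} together with \cite[Theorem 4.6]{B06}, $E_{2,0}^n$ interlaces everything in $\E_n$ of the relevant type. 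So I would simply remark that the case $r=1$ follows by the same proof, or alternatively point out that $A_n = A_{r,0}^n|_{r=1}$ already lies in $\E_n$ and is interlaced by $E_{2,0}^n\in\A_n$, hence lies in $\A_n$ by Lemma~\ref{lem: first properties of A_d}, giving $\I_n(A_n)\prec A_n$.

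The main obstacle — really the only subtlety — is precisely this boundary value $r=1$, which is formally excluded from the sum in Theorem~\ref{thm: colored eulerian polynomials} (the sum runs over $r\geq 2$). Everything else is a one-line invocation. I expect the resolution to be a short sentence noting that $E_{1,k}^n = E_k^n \in \E_n$ and that the interlacing $E_{2,0}^n \prec E_k^n$ used in the proof of Theorem~\ref{thm: colored eulerian polynomials} holds for $k = 0,\dots,n$ irrespective of $r$, so that the conclusion $\I_n(A_{n,r}) \prec A_{n,r}$ extends to $r=1$. Once $\I_n(A_{n,r}) \prec A_{n,r}$ is established for all $n,r\geq 1$, the real-rootedness and alternatingly increasing claims are automatic: $\prec$ entails real-rootedness of both polynomials by definition, and Theorem~\ref{thm: interlacing TFAE}(4)$\Rightarrow$(1) together with Remark~\ref{rmk: alternatingly increasing connection} yields the alternatingly increasing property.
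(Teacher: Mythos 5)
Your handling of $r\geq 2$ is exactly the paper's: take $c_{r,0}=1$ and all other $c_{r',k}=0$ in Theorem~\ref{thm: colored eulerian polynomials} and invoke it directly, then pass through Theorem~\ref{thm: interlacing TFAE} and Remark~\ref{rmk: alternatingly increasing connection} for the ``in particular.'' That part is fine.

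The ``robust fix'' you propose for $r=1$ does not work, and the gap is real. You claim that $E_{2,0}^n \prec E_k^n$ continues to hold when the $E_k^n$ come from $r'=1$, and that consequently $A_n$'s $f$-polynomial lies in $\A_n$. But the $f$-polynomial of $A_n$ (with respect to $n$) is $\E\bigl((x+1)^n\bigr) = E_0^n$, and the relevant interlacing goes the wrong way: the zeros of $(x+1)^n$ are all $-1$, strictly below the zeros of $(2x+1)^n$ (all $-1/2$), so \cite[Theorem~4.6]{B06} gives $E_0^n \prec E_{2,0}^n$, not $E_{2,0}^n \prec E_0^n$. In fact $E_0^n$ is the \emph{minimum} of $\E_n$ under $\prec$ (Proposition~\ref{poset}), so $\RR_n(E_0^n)=E_n^n$ satisfies $E_0^n \prec E_n^n$ and hence $E_0^n\notin\A_d$. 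Concretely, $\I_n(A_n)=xA_n$ and $W[xA_n,A_n]=A_n^2\geq 0$, which is the wrong sign for $xA_n\prec A_n$; so the literal statement ``$\I_n(A_{n,1})\prec A_{n,1}$'' cannot be derived from membership in $\A_n$. The assertion that ``$E_{2,0}^n$ interlaces everything in $\E_n$ of the relevant type'' is therefore false for the one type you actually need.

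The paper handles $r=1$ by a much shorter route that bypasses this entirely: $A_n$ is real-rooted with nonnegative coefficients and symmetric with respect to its degree $n-1$, so $\I_{n-1}(A_n)=A_n$ and the interlacing is the trivial $A_n\prec A_n$. (For $r=1$ the operator $\I$ must be taken with respect to $n-1$, the degree of $A_n$, which is what ``reciprocal'' means here; this is the implicit reading in the paper's Corollary, since $\deg A_{n,r}=n$ only when $r\geq 2$.) You should discard the attempted extension of Theorem~\ref{thm: colored eulerian polynomials} to $r'=1$ and replace it with this observation.
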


\begin{proof}
Notice first that the result is well-known in the case when $r=1$, since $A_n$ is a real-rooted and symmetric polynomial.  
For $r>1$, the result follows immediately from Theorem~\ref{thm: colored eulerian polynomials}.
\end{proof}
We would now like to prove the analogous result to Corollary~\ref{cor: colored eulerian polynomials} for $d_{n,r}$.
To do this, we will make use of the following polynomial operator.

\subsection{A deranged map}
\label{subsec: a deranged map}
Consider the linear map
\[
\D : \R[x]\to \R[x]
\quad 
\mbox{defined by }
\quad
\D(x^k)=d_k \mbox{ for all } k \geq 0. 
\]
Note that 
$$
\D(x+1)^n = \sum_{k=0}^n \binom n k d_k= A_n.
$$
We will now prove some properties of this transformation.
To do so, we require a few tools from the theory of \emph{(real) stable polynomials}; a multivariate generalization of real-rooted polynomials which was developed in \cite{BB09a,BB09b} and surveyed in \cite{W11}.  

Given $z\in\C$, let $\Re(z)$ and $\Im(z)$ respectively denote the real and imaginary parts of $z$, and let $\HH :=\{x\in\C : \Im(z)>0\}$.
We also denote a point $(z_1,\ldots,z_m)\in\C^m$ simply by $\bf{z}$.  
A polynomial $p\in\C[x_1,\ldots,x_m]$ is called \emph{stable} if $p$ is identically zero or $p({\bf z})\neq 0$ whenever ${\bf z}\in\HH^m$.  
For $p\in\C[x_1,\ldots,x_m]$ and $i\in[m]$ we let $\deg_i(p)$ denote the degree of $x_i$ in $p$.  
We will make use of some of the following elementary properties of stable polynomials.
\begin{lemma}
\cite[Lemma 2.4]{W11}
\label{lem: elementary properties}
The following operations preserve stability of polynomials:
\begin{enumerate}
	\item \emph{Permutation:} For $\pi\in\mathfrak{S}_m$, $p\mapsto p(x_{\pi(1)},\ldots,x_{\pi(m)})$,
	\item \emph{Scaling:} For $c\in\C$ and $\emph{c}\in\R^m$, $p\mapsto cp(c_1x_1,\ldots,c_mx_m)$,
	\item \emph{Diagonalization:} For $i,j\in[m]$, $p\mapsto p\Big|_{x_i = x_j}$,
	\item \emph{Specialization:} For $z\in\overline{\HH}$, the closure of $\HH$, $p\mapsto p(z,x_2,\ldots,x_m)$,
	\item \emph{Inversion:} $p\mapsto x_1^{\deg_1(f)}p(-x_1^{-1},x_2,\ldots,x_m)$, and 
	\item \emph{Differentiation:} $p\mapsto \dfrac{\partial}{\partial x_1}p$.
\end{enumerate}
\end{lemma}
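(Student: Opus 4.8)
The plan is to verify the six operations one at a time, in increasing order of difficulty, using throughout that by definition $p$ is stable precisely when $p\equiv 0$ or $p(\mathbf{z})\neq 0$ for every $\mathbf{z}\in\HH^m$.

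Four of the operations — \emph{Permutation}, \emph{Scaling}, \emph{Diagonalization}, and \emph{Inversion} — I would settle by a single substitution principle: if a map built by substituting functions of the variables sends $\HH^m$ into $\HH^m$, then the polynomial obtained from $p$ by that substitution (after multiplying by a nowhere-vanishing factor, if needed to clear denominators) again does not vanish on $\HH^m$. Concretely, permuting coordinates preserves $\HH^m$, so Permutation is immediate; multiplying one coordinate by a positive real preserves $\HH$, and multiplying $p$ by a nonzero scalar $c$ leaves its zero set unchanged, which gives Scaling; evaluating $p|_{x_i=x_j}$ at a point of $\HH^{m-1}$ is the same as evaluating $p$ at a point of $\HH^m$ whose $i$-th and $j$-th coordinates coincide, hence nonzero, which gives Diagonalization; and since the Möbius transformation $z\mapsto -1/z$ maps $\HH$ onto itself, $p(-z_1^{-1},z_2,\ldots,z_m)\neq 0$ for $\mathbf{z}\in\HH^m$, so multiplying by $z_1^{\deg_1(p)}\neq 0$ — which turns $x_1^{\deg_1(p)}p(-x_1^{-1},x_2,\ldots,x_m)$ into an honest polynomial without creating zeros in $\HH^m$ — gives Inversion. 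Each is a one- or two-line check.

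\emph{Specialization} is the first step with analytic content. For $z\in\HH$ it is evaluation at an interior point and is immediate. For $z$ on the boundary $\R$ I would use continuity of zeros: $p(z,x_2,\ldots,x_m)$ is the locally uniform limit, as $\varepsilon\downarrow 0$, of the polynomials $p(z+i\varepsilon,x_2,\ldots,x_m)$, each of which is stable by the interior case just treated. Hurwitz's theorem — a locally uniform limit of zero-free holomorphic functions on a connected open set is either zero-free or identically zero — applied on the connected domain $\HH^{m-1}$, then shows that $p(z,x_2,\ldots,x_m)$ is either zero-free on $\HH^{m-1}$ or identically zero; in either case it is stable.

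\emph{Differentiation} is the deepest, and I would reduce it to the Gauss--Lucas theorem. Assume $p\not\equiv 0$, fix $(z_2,\ldots,z_m)\in\HH^{m-1}$, and set $g(t):=p(t,z_2,\ldots,z_m)$. Then $g\not\equiv 0$ (otherwise $p$ would vanish at interior points), and stability of $p$ says exactly that $g$ has no zero in $\HH$, i.e., every zero of $g$ lies in the closed lower half-plane $\{w\in\C:\Im(w)\le 0\}$, a convex set. By Gauss--Lucas the zeros of $g'$ lie in the convex hull of the zeros of $g$, hence also in $\{w\in\C:\Im(w)\le 0\}$, so $g'$ has no zero in $\HH$; since $g'(t)=(\partial p/\partial x_1)(t,z_2,\ldots,z_m)$ and $(z_2,\ldots,z_m)\in\HH^{m-1}$ was arbitrary, $\partial p/\partial x_1$ has no zero in $\HH^m$ and is therefore stable. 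The structural fact doing the work here is that the complement of $\HH$ is a convex half-plane, which is precisely the hypothesis Gauss--Lucas needs; together with the appeal to Hurwitz in the Specialization step, this is where all the genuine work lies, the remaining four operations being purely formal. I expect the only real obstacle, aside from invoking these two classical theorems, to be careful bookkeeping of the degenerate cases — the identically zero polynomial, and specializations that collapse to constants — so that the ``identically zero is stable'' convention is applied consistently.
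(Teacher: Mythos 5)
The paper offers no proof of this lemma: it is imported verbatim from Wagner's survey \cite[Lemma 2.4]{W11}, so there is no in-paper argument to measure you against. Your proposal is the standard proof of these closure properties (essentially the one in Wagner and in Borcea--Br\"and\'en), and the division of labour is right: Permutation, Scaling, Diagonalization and Inversion are formal substitutions built from self-maps of $\HH$; Specialization at a boundary point is the several-variable Hurwitz theorem applied to the locally uniform limit of $p(z+i\varepsilon,x_2,\ldots,x_m)$ on the connected domain $\HH^{m-1}$; and Differentiation is Gauss--Lucas applied to univariate slices, using that the complement of $\HH$ is convex. One remark on Scaling: as literally stated, with the $c_i$ merely real, the claim is false --- $x_1+x_2$ is stable but $x_2-x_1$ vanishes at $(i,i)$ --- and the correct hypothesis, which you silently and correctly use, is $c_i>0$ for all $i$; this is a typo inherited from the paper's transcription of Wagner's lemma.

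The one step you flag but do not actually close is the degenerate case in Differentiation. If the slice $g(t)=p(t,z_2,\ldots,z_m)$ were a nonzero constant, then $g'\equiv 0$ and Gauss--Lucas gives nothing, while $\partial p/\partial x_1$ could a priori still be a nonzero polynomial vanishing identically on that slice, hence at points of $\HH^m$. This cannot happen, but it needs an argument: write $p=\sum_{k=0}^{D}a_k(x_2,\ldots,x_m)x_1^{k}$ with $D=\deg_1 p\geq 1$ (if $D=0$ the derivative is identically zero and there is nothing to prove). The top coefficient $a_D$ is itself stable --- apply your Inversion and then Specialization at $z=0\in\overline{\HH}$, which extracts $(-1)^{D}a_D$ --- and since $a_D\not\equiv 0$ it is nonvanishing on $\HH^{m-1}$. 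Hence every slice $g$ has degree exactly $D\geq 1$, the degeneracy never occurs, and $g'\not\equiv 0$ has all its zeros in the closed lower half-plane as you argue. With that sentence added, the proof is complete.
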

By definition of $p\prec q$ for univariate polynomials, we have that $p\prec q$ if and only if the polynomial $p+iq\in\C[x]$ is a stable polynomial (see Theorem~\ref{thm: Hermite-Biehler}).  
Generalizing this, for $p,q\in\R[x_1,\ldots,x_m]$, we write $p\prec q$ if and only if $q+ip$ is stable.  
We will then use the following well-known lemma.
\begin{lemma}\label{dint}
If $p \in \R[x_1,\ldots,x_n]$ is stable and $\alpha_1,\ldots, \alpha_n$ are nonnegative numbers, then 
$$
\sum_{j=1}^n \alpha_j \dfrac{\partial p}{\partial x_j} \prec p.
$$
\end{lemma}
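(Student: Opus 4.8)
The plan is to reduce the statement to the single-variable derivative case and then iterate. First I would observe that by the convexity of stability under the operations in Lemma~\ref{lem: elementary properties}, it suffices to prove the claim for a single summand, i.e. that $\alpha \frac{\partial p}{\partial x_j} \prec p$ for a fixed $j$ and $\alpha \geq 0$; the general case then follows because a sum of polynomials each in proper position with respect to $p$ is again in proper position with $p$ (the set $\{q : q \prec p\}$ is a convex cone, cf.\ Proposition~\ref{cones}, and more precisely because $p + i\sum_j \alpha_j \partial_j p$ is a limit of products of stable polynomials, or directly because stability is preserved under the relevant convex combinations). Since $\alpha p \prec p$ trivially handles the scaling, the heart is: if $p$ is stable, then $\partial_j p \prec p$, i.e.\ $p + i \partial_j p$ is stable.

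The key step is thus to show $p + i\,\partial_{x_j} p$ is stable whenever $p$ is. By the Permutation operation I may assume $j = 1$. The standard trick is to introduce a fresh variable: consider $P(y, x_1, \ldots, x_n) := p(x_1 + y, x_2, \ldots, x_n)$, which is stable since it is obtained from $p$ by a stability-preserving change of variables (it is the composition of a translation, which preserves the upper half-plane, realized via specialization/scaling arguments — more carefully, $p(x_1+y,\dots)$ is stable because if $\Im x_1, \Im y > 0$ then $\Im(x_1+y) > 0$). Then $\frac{\partial P}{\partial y} = \partial_{x_1} p(x_1+y,\dots)$, and one checks that the operator $p \mapsto p + i\partial_{x_1}p$ corresponds to evaluating a suitable stable polynomial in an auxiliary variable at a point of $\overline{\HH}$: indeed $p + i\partial_{x_1}p$ is, up to a constant, $e^{i\,\partial_{x_1}}p$ truncated, but the clean route is $(1 + i\partial_{x_1})p(x_1,\dots) = P(i, x_1, \dots, x_n)\big|$ — no; rather, one uses that $p(x_1 + t) = \sum_k \frac{t^k}{k!}\partial_{x_1}^k p$, so specializing the stable polynomial $P(y,x_1,\dots)$ at $y = i \in \overline{\HH}$ gives $\sum_k \frac{i^k}{k!}\partial_{x_1}^k p$, which is stable by the Specialization operation but is not quite $p + i\partial_{x_1}p$.

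So the correct argument is the one that appears in the literature (e.g.\ in the survey \cite{W11} or \cite{BB09a}): since $p$ is stable, the polynomial $x_0 \partial_{x_1}p(x_1,\dots) + p(x_1,\dots)$, viewed in the variables $x_0, x_1, \ldots, x_n$, is stable — this is because it equals $\partial_{x_1}\big(x_0\, p(x_1,\dots,x_n)\big)$ minus nothing; more precisely, $x_0 p + \partial_{x_1} p$ arises from the stable polynomial $p(x_0 x_1, x_2, \dots)$ or from a limit of Hurwitz-type arguments — and then specializing $x_0 \mapsto i \in \overline{\HH}$ yields $p + i\,\partial_{x_1}p$ stable by Specialization. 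The cleanest realization: $\frac{\partial}{\partial x_1}\big[x_0\, p\big] = x_0\,\partial_{x_1}p$ does not include the $p$ term, so instead I would use that if $q(x_0, x_1, \dots)$ is stable then so is $q + \partial_{x_1}q$ is false in general; the genuinely standard fact is that $q \mapsto (x_0 - \partial_{x_1})q$ preserves stability (a Hurwitz/Laguerre–Pólya type statement), and specializing $x_0 \mapsto i$ then negating gives the result. I would cite Lemma~\ref{lem: elementary properties} together with the observation that $p \mapsto x_0 p + \frac{\partial p}{\partial x_1}$ sends stable polynomials in $n$ variables to stable polynomials in $n+1$ variables — a one-line consequence of the fact that for fixed $\mathbf{z} \in \HH^n$ the univariate polynomial $x_0 p(\mathbf z) + \partial_{x_1}p|_{\mathbf z}$ has no root in $\overline{\HH}$ because $-\partial_{x_1}p(\mathbf z)/p(\mathbf z)$ lies in the closed lower half-plane (a known property of logarithmic derivatives of stable polynomials), hence the root $x_0 = -\partial_{x_1}p(\mathbf z)/p(\mathbf z)$ is not in $\HH$. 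Then Specialization at $x_0 = i$ and multiplication by $i$ (Scaling) give that $i p + i\cdot i\,\partial_{x_1}p \cdot(-1)$... I would carefully track the factors of $i$ so that the final polynomial is exactly $p + i\,\partial_{x_1}p$, proving $\partial_{x_1}p \prec p$.

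The main obstacle is getting the bookkeeping with $i$ and signs exactly right so that the conclusion reads $q + ip$ stable with $q = p$ and $p = \partial_{x_j}p$ in the notation of the definition $p \prec q \iff q + ip$ stable; the structural content (logarithmic derivative of a stable polynomial maps $\HH^n$ into the closed lower half-plane, plus Specialization) is routine given Lemma~\ref{lem: elementary properties}. Once the single-derivative case is established, linearity plus the convex-cone property of $\{q : q \prec p\}$ finishes the proof: $\sum_j \alpha_j \partial_{x_j}p \prec p$ because each $\alpha_j \partial_{x_j} p \prec p$ and $\prec p$ is closed under addition.
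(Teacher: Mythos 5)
Your route differs from the paper's: the paper proves the lemma in one line by observing that the operator $T = 1 + i\sum_j\alpha_j\,\partial/\partial x_j$ preserves stability as a special case of the Borcea--Br\"and\'en Weyl-algebra classification \cite[Theorem 1.2]{BB}, whereas you attempt a self-contained argument via logarithmic derivatives and the closure operations of Lemma~\ref{lem: elementary properties}. That approach is legitimate and indeed standard, but the version you finally commit to is wrong, and not merely up to $i$'s and signs. You assert that $p \mapsto x_0 p + \partial_{x_1}p$ sends stable polynomials in $n$ variables to stable polynomials in $n+1$ variables, on the grounds that $-\partial_{x_1}p(\mathbf z)/p(\mathbf z)$ lies in the closed lower half-plane for $\mathbf z\in\HH^n$. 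The known fact is the opposite: for stable $p$ and $\mathbf z\in\HH^n$ one has $\Im\bigl(\partial_{x_1}p(\mathbf z)/p(\mathbf z)\bigr)\le 0$ (restrict to $g(t)=p(z_1+t,z_2,\dots,z_n)$, whose roots $b_k$ satisfy $\Im b_k\le0$, and compute $g'(0)/g(0)=\sum_k(-b_k)^{-1}$), so $-\partial_{x_1}p/p$ lies in the closed \emph{upper} half-plane, and the $x_0$-root of your proposed polynomial can land in $\HH$: for $p=x_1$, $x_0x_1+1$ vanishes at $(i,i)\in\HH^2$. Moreover, even if $x_0p+\partial_{x_1}p$ were stable, specializing $x_0=i$ would give $\partial_{x_1}p+ip$ stable, which by Hermite--Biehler is $p\prec\partial_{x_1}p$, the wrong direction. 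The auxiliary polynomial you actually need is $x_0p-\partial_{x_1}p$ --- the ``$(x_0-\partial_{x_1})q$'' form you name a few lines earlier and then contradict: its $x_0$-root is $\partial_{x_1}p(\mathbf z)/p(\mathbf z)$, which has nonpositive imaginary part, so it is stable; specializing $x_0=i$ and multiplying by $-i$ gives $p+i\partial_{x_1}p$ stable, i.e.\ $\partial_{x_1}p\prec p$.

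Two lesser points. First, the reduction to one summand is unnecessary: $\sum_j\alpha_j\partial_{x_j}p(\mathbf z)/p(\mathbf z)$ has nonpositive imaginary part as a nonnegative combination of such quantities, so you can run the same argument once with $x_0p-\sum_j\alpha_j\partial_{x_j}p$. Second, your appeal to Proposition~\ref{cones} to reassemble the summands is not licensed as stated, since that proposition is formulated for univariate real-rooted $p$; the analogous convex-cone statement in the multivariate stable setting is true but would need its own justification (e.g.\ a multivariate Hermite--Kakeya--Obreschkoff argument), and it is cleaner here simply to avoid it.
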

\begin{proof}
We want to prove that if $p\in \R[x_1,\ldots,x_n]$ is stable, then so is 
$$
p + i \sum_{j=1}^n \alpha_j \dfrac{\partial p}{\partial x_j}.
$$
However the differential operator $T=1 +i\sum_{j=1}^n \alpha_j {\partial}/{\partial x_j}$ is seen to preserve stability by, for example, an application of \cite[Theorem 1.2]{BB}.
\end{proof}

\begin{lemma}
\label{lem: deranged map}
Let $p\in\R[x]$ be a polynomial whose zeros all lie in the interval $[-1,0]$.  
Then $\D(p)$ is real-rooted.  
Moreover, $\D(p) \prec \D((x-\alpha)p)$ for all $\alpha \in [-1,0]$.
\end{lemma}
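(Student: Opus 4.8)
The plan is to realize $\D$ as a specialization of a bilinear construction that manifestly preserves stability, so that real-rootedness and the interlacing claim both follow from Lemma~\ref{dint} and the elementary stability operations in Lemma~\ref{lem: elementary properties}. The key observation is that $\D$ can be encoded via the exponential generating function of the derangement polynomials: recall $\sum_{k\geq 0} d_k t^k/k! \cdot$ (suitably normalized) relates to $e^{-t}/(1-t)$-type kernels, but more usefully for our purposes, $\D$ behaves like a ``Worpitzky-type'' transform dual to $\E$. Concretely, since $\D((x+1)^n) = A_n$ and the $A_n$ are real-rooted with roots in $[-1,0]$, and since every $p$ with roots in $[-1,0]$ is a nonnegative combination of the $(x+1)^{d-k}x^k$ only after passing through $\E^{-1}$, I would instead look for a two-variable stable polynomial $P(x,y)$ such that $\D(p)(x)$ arises by applying $p(\partial_y)$-type operators or by specialization.

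First I would set up the bivariate generating identity: define $G(x,y) := \sum_{n\geq 0} \D\big((x+1)^n\big)\,\frac{y^n}{n!} = \sum_{n\geq 0} A_n(x)\,\frac{y^n}{n!} = \frac{(1-x)\,e^{(1-x)y}}{\,1 - x\,e^{(1-x)y}\,}$ (the classical EGF for Eulerian polynomials), and more to the point work with $\sum_n \D(x^n) y^n/n! = \sum_n d_n(x) y^n /n! = \frac{(1-x)e^{-y}}{1-xe^{(1-x)y}}$ or the appropriate normalization. The crucial structural fact I would extract is that $\D$ restricted to polynomials with roots in $[-1,0]$ factors through a stability-preserving map. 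The cleanest route: a polynomial $p$ with all roots in $[-1,0]$ satisfies $p(x) = c\prod (x+\theta_i)$ with $\theta_i\in[0,1]$, hence $p = c\sum \binom{\cdot}{\cdot}$-combinations that are $\E$ of something in $\E_d$; then I would aim to show $\D = \Phi\circ \E^{-1}$ for a stability preserver $\Phi$, or directly that $p\mapsto \D(p)$ sends the cone of $[-1,0]$-rooted polynomials into the real-rooted ones by exhibiting $\D(p)$ as $p(\text{shift})$ applied to a stable kernel and then specializing a variable in $\overline{\HH}$.

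Second, for the interlacing statement $\D(p)\prec \D((x-\alpha)p)$ with $\alpha\in[-1,0]$: once $\D$ is shown to be (the restriction of) a stability preserver, I would write $(x-\alpha)p = (x+1)p - (1+\alpha)p$ when $\alpha=-1$ trivially, and in general use that multiplication by $(x-\alpha)$ with $\alpha\in[-1,0]$ on the $[-1,0]$-rooted cone corresponds, after $\E^{-1}$, to an operation built from multiplication by $x$ and by $x+1$ — exactly the ingredients of the diamond product. Then $\D((x-\alpha)p)$ and $\D(p)$ should be comparable under $\prec$ because inserting one extra root in $[-1,0]$ interlaces, and $\D$ (as a stability preserver of the relevant type, e.g.\ one arising from $\sum \alpha_j \partial_{x_j}$ via Lemma~\ref{dint}) transfers interlacing. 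I expect the argument to hinge on realizing $\D(p)$ as $\big[\,q(\partial_y)\,K(x,y)\,\big]\big|_{y=y_0}$ for a fixed stable kernel $K$, where $q = \E^{-1}(p)$ has nonnegative coefficients, so that Lemma~\ref{dint} (applied iteratively / to a product of such operators) gives both stability of the image and the interlacing $\D(p)\prec \D((x+1)\diamond\text{-type shift of }p)$.

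The main obstacle will be pinning down the correct stable kernel $K(x,y)$ (equivalently, the precise integral or differential representation of $\D$) and verifying that the operator implementing $\D$ on the $[-1,0]$-rooted cone is genuinely a composition of the stability-preserving moves in Lemmas~\ref{lem: elementary properties} and~\ref{dint}; the EGF $\frac{(1-x)e^{-y}}{1-xe^{(1-x)y}}$ is not a polynomial, so I will need to either truncate carefully (working degree-by-degree in $n$) or pass to the finite-free/Hermite–Biehler picture where $\D(p)$ for $p=(x+1)^n$ is literally $A_n$ and then argue by a limiting/interlacing-cone argument using that $\A_n$-type containments and $\prec$ are closed under the needed limits. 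Once the representation of $\D$ as a stability preserver is secured, real-rootedness is immediate (univariate specialization of a stable polynomial) and the interlacing claim reduces to a one-line application of Proposition~\ref{cones} together with the fact that $(x-\alpha)p$ and $p$ have interlacing zeros and $\D$ preserves $\prec$.
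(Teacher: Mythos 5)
Your proposal correctly identifies the toolkit (Hermite--Biehler, Lemma~\ref{dint}, Proposition~\ref{cones}, and the idea of lifting to a bivariate stable polynomial), but it stops short of an actual argument and the specific path you sketch is not the one that works. You propose to realize $\D$ as a specialization of a fixed stable kernel $K(x,y)$, driven by the exponential generating function of $d_n$, and you yourself flag that finding such a $K$ is ``the main obstacle.'' That obstacle is not overcome: there is no single kernel in your sketch, and the EGF is not a polynomial, so the specialization/limiting machinery you invoke would need a genuinely new idea to control. You also conflate ``$\D$ is a stability preserver'' with ``$\D$ preserves $\prec$''; the latter is what the interlacing claim needs and is strictly stronger, and your plan simply asserts it rather than proving it.

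What the paper actually does is quite different and more elementary: writing $p = c\prod_i(x+\theta_i)$ with $\theta_i\in[0,1]$, one expands $\D(p)$ as $\sum_k e_{n-k}(\theta)\,d_k(x)$ and, using $\I_n(d_n)=d_n$, reduces to showing that $H_\theta(x):=\sum_k e_{n-k}(\theta)\,x^{n-k}d_k(x)$ interlaces $H_{\theta\setminus\theta_1}$. The key object is not a kernel but the homogeneous bivariate polynomial
\[
F_\theta(x,y)=\sum_{\pi\in\mathfrak{S}_n}x^{\exc(\pi)}y^{\axc(\pi)}\prod_{i\text{ fixed}}x\theta_i,
\]
whose stability is equivalent (by homogeneity) to real-rootedness of $H_\theta=F_\theta(x,1)$, together with the combinatorial ``insert the letter $1$'' recursion
\[
F_\theta= \theta_1 x F_{\theta \setminus \theta_1}+ xy\Bigl(\tfrac \partial {\partial x} + \tfrac \partial {\partial y}\Bigr)F_{\theta\setminus \theta_1}+ xy\sum_{i\geq 2} (1-\theta_i)F_{\theta \setminus \{\theta_1,\theta_i\}}.
\]
Lemma~\ref{dint} is then applied to the middle term, and Proposition~\ref{cones} combines the three interlacing relations, with the whole thing closed by induction on $|\theta|$. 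This recursion is the missing engine in your proposal: without it (or an equivalent), there is no way to get the term-by-term interlacing facts that Proposition~\ref{cones} needs, and no way to set up the induction. So while your high-level intuition about stability is on target, the proof as proposed has a genuine gap at exactly the point you flagged, and the gap is filled by a combinatorial recursion rather than by a stable kernel.
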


\begin{proof}
If $\theta=\{\theta_i\}_{i=1}^n$ is a multiset of $n$ numbers in the interval $[0,1]$, let $e_k(\theta)$ be the $k^{th}$ elementary symmetric function in $\theta$. 
We need to prove that the polynomial 
\[
G_\theta(x) := \sum_{k=0}^n e_{n-k}(\theta)d_k(x)
\]
is real--rooted. Note that $x^nd_n(1/x) =d_n(x)$ for all $n$. 
Hence, $G_\theta(x)$ is real-rooted if and only if the polynomial 
\[
H_\theta(x) :=\sum_{k=0}^n e_{n-k}(\theta)x^{n-k}d_k(x)
\]
is real-rooted. 
Let 
\[
F_\theta(x,y) := 
\sum_{\pi \in \mathfrak{S}_n}x^{\exc(\pi)}y^{\axc(\pi)} \prod_{i}x\theta_i,
\]
where the product is over all fixed points $i$ of $\pi$, and where $\axc(\pi)$ is the number of \emph{anti-excedances} in $\pi$; i.e., the number of indices $i\in[n]$ for which $\pi_i<i$. 
Note that $F_\theta(x,y)$ is homogeneous and $F_\theta(x,1)=H_\theta(x)$. We claim that 
\[
F_\theta= \theta_1 x F_{\theta \setminus \theta_1}+ xy\left(\frac \partial {\partial x} + \frac \partial {\partial y}\right)F_{\theta\setminus \theta_1}+ xy\sum_{i=2}^n (1-\theta_i)F_{\theta \setminus \{\theta_1,\theta_i\}}.
\]
The proof of the claim follows by examining the effect of inserting $1$ in a permutation of $\{2,\ldots,n\}$.  
Here, we can think of the permutation of $\{2,\ldots,n\}$ as given in cycle notation, and inserting $1$ as placing $1$ in between two elements of $\{2,\ldots,n\}$ in one of the cycles of this permutation (see also \cite[Proof of Theorem 5.4]{BLV16}).  
The first term on the right-hand-side of the above equation corresponds to making $1$ a fixed point. 
The second term corresponds to inserting $1$ into a \emph{weak excedance} (i.e., an index $i\in[n]$ for which $\pi_i\geq i$) or an anti-excedance. 
However, in doing so, we have counted wrong when inserting $1$ into a fixed point. 
Indeed we got an extra $\theta_i$. 
Hence, for any $\pi \in \mathfrak{S}_n$ where $\{1,i\}$, $2\leq i \leq n$, is a two-cycle we need to compensate with 
\[
(1-\theta_i) x^{\exc(\pi)}y^{\axc(\pi)} \prod_{j}x\theta_j.
\]
This corresponds to the last term. 

We now prove by induction over $n$ that $H_{\theta\setminus \theta_1} \prec H_{\theta}$. 
Note that by homogeneity, $H_\theta$ is real-rooted if and only if $F_\theta$  is stable (see \cite[Theorem 4.5]{BBL}). 
Now 
\[
H_\theta = \theta_1 x H_{\theta \setminus \theta_1}+ x\left(\frac \partial {\partial x} + \frac \partial {\partial y}\right)F_{\theta\setminus \theta_1}(x,1)+ x\sum_{i=2}^n (1-\theta_i)H_{\theta \setminus \{\theta_1,\theta_i\}}.
\]
Since $F_{\theta\setminus \theta_1}$ is stable by induction, and since we have  
\[
\left(\frac \partial {\partial x} + \frac \partial {\partial y}\right)F_{\theta\setminus \theta_1} \prec F_{\theta\setminus \theta_1}
\]
by Lemma \ref{dint}, then $H_{\theta \setminus \theta_1} \prec x \left(\frac \partial {\partial x} + \frac \partial {\partial y}\right)F_{\theta\setminus \theta_1}(x,1)$. 
Also, 
$H_{\theta \setminus \theta_1} \prec \theta_1 x H_{\theta \setminus \theta_1}$ and $H_{\theta \setminus \theta_1} \prec xH_{\theta \setminus \{\theta_1,\theta_i\}}$, by induction. 
The theorem now follows from Proposition \ref{cones}, since the statement $\D(p) \prec \D((x-\alpha)p)$ for all $\alpha \in [-1,0]$ corresponds to $H_{\theta\setminus \theta_1} \prec H_{\theta}$. 
\end{proof}

Given two polynomials $p$ and $q$ of degree $n$ with zeros $\alpha_1\leq\alpha_2\leq\cdots\leq\alpha_n$ and $\beta_1\leq\beta_2\leq \cdots\leq \beta_n$ we write $p\leq q$ whenever $\alpha_k\leq \beta_k$ for all $k\in[n]$. 
Using Lemma~\ref{lem: deranged map}, we can prove the following theorem in analogy to \cite[Theorem 4.6]{B06}, which applies to the subdivision operator.

\begin{theorem}
\label{thm: deranged map}
Suppose that $p,q\in\R[x]$ are polynomials both of which have all zeros in the interval $[-1,0]$, and suppose that $p\leq q$.  
Then $\D(p)\prec\D(q)$.  
\end{theorem}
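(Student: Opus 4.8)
The plan is to mimic the structure of the proof of \cite[Theorem 4.6]{B06}, replacing the subdivision operator $\E$ by the deranged map $\D$ and using Lemma~\ref{lem: deranged map} as the analogue of the basic interlacing step. The key point established in Lemma~\ref{lem: deranged map} is that if $r$ has all zeros in $[-1,0]$ and $\alpha \in [-1,0]$, then $\D(r) \prec \D((x-\alpha)r)$; iterating this, one sees that for any chain of polynomials obtained by successively multiplying in linear factors $(x-\alpha)$ with $\alpha \in [-1,0]$, the images under $\D$ form a $\prec$-chain. So the real work is to connect $p$ and $q$, when $p \leq q$, by such a chain in a way compatible with $\D$.

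First I would reduce to the case $\deg p = \deg q = n$ (if one of them is the zero polynomial, or has strictly smaller degree, the statement is immediate or follows by a limiting/continuity argument using that $\D$ is linear and $0 \prec$ everything). Write the zeros of $p$ as $\alpha_1 \leq \cdots \leq \alpha_n$ and those of $q$ as $\beta_1 \leq \cdots \leq \beta_n$, all in $[-1,0]$, with $\alpha_k \leq \beta_k$ for every $k$. The natural interpolation is to define, for $0 \leq j \leq n$,
\[
p_j := c \prod_{k=1}^{j}(x-\beta_k)\prod_{k=j+1}^{n}(x-\alpha_k),
\]
where $c>0$ is chosen so that $p_0 = p$ and $p_n$ is a positive multiple of $q$ (leading coefficients do not affect $\prec$, so scaling is harmless). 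Each $p_j$ has all its zeros in $[-1,0]$. The claim to verify is $\D(p_{j-1}) \prec \D(p_j)$ for each $j$, which together with transitivity of $\prec$ (valid here because all the $\D(p_j)$ are real-rooted, interlacing consecutive ones, and one can invoke the partial-order structure as in Proposition~\ref{poset} or a direct Lemma 2.3 of \cite{B06}-type argument) gives $\D(p) \prec \D(q)$.

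For the single step $\D(p_{j-1}) \prec \D(p_j)$, write $r := c\prod_{k=1}^{j-1}(x-\beta_k)\prod_{k=j+1}^{n}(x-\alpha_k)$, so that $p_{j-1} = (x-\alpha_j)r$ and $p_j = (x-\beta_j)r$, and $r$ has all zeros in $[-1,0]$. Here the obstacle is that Lemma~\ref{lem: deranged map} gives $\D(r) \prec \D((x-\gamma)r)$ for each $\gamma\in[-1,0]$ individually, but we need to compare the two polynomials $\D((x-\alpha_j)r)$ and $\D((x-\beta_j)r)$ with $\alpha_j \leq \beta_j$. I would handle this by linearity: $\D((x-\beta_j)r) - \D((x-\alpha_j)r) = (\alpha_j - \beta_j)\D(r)$, so
\[
\D(p_j) = \D(p_{j-1}) + (\beta_j - \alpha_j)\,\D(r),
\]
with $\beta_j - \alpha_j \geq 0$. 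Since $\D(r) \prec \D(p_{j-1}) = \D((x-\alpha_j)r)$ by Lemma~\ref{lem: deranged map}, and trivially $\D(p_{j-1}) \prec \D(p_{j-1})$, Proposition~\ref{cones} (convex cone $\{q : \D(p_{j-1}) \prec q\}$, using that $\D(p_{j-1})$ is real-rooted and nonzero) yields $\D(p_{j-1}) \prec \D(p_{j-1}) + (\beta_j-\alpha_j)\D(r) = \D(p_j)$, exactly as desired. The main thing to be careful about is the degenerate situation where $\D(p_{j-1})$ could vanish or drop degree, and more generally ensuring transitivity of $\prec$ along the chain is legitimate; this is where I would lean on the fact (as in the proof of Proposition~\ref{poset}) that all polynomials involved lie in a common family on which $\prec$ is a genuine partial order, or else argue directly that consecutive interlacings with consistent Wronskian signs compose.
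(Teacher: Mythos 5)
Your overall strategy---interpolating from $p$ to $q$ one zero at a time and handling each step with Lemma~\ref{lem: deranged map} and Proposition~\ref{cones}---is the same as the paper's, but two things go wrong. First, the single step contains a sign error that then forces a misapplication of the cone proposition. From your own computation $\D(p_j)-\D(p_{j-1})=(\alpha_j-\beta_j)\D(r)$ the identity should read $\D(p_{j-1})=\D(p_j)+(\beta_j-\alpha_j)\D(r)$, not the displayed one. More importantly, the cone you invoke, $\{g : \D(p_{j-1})\prec g\}$, would require $\D(p_{j-1})\prec\D(r)$, whereas Lemma~\ref{lem: deranged map} gives the opposite relation $\D(r)\prec\D(p_{j-1})$. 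The correct orientation is to apply the lemma to $p_j=(x-\beta_j)r$ to get $\D(r)\prec\D(p_j)$, and then use convexity of $\{g : g\prec\D(p_j)\}$: since $\D(p_{j-1})=\D(p_j)+(\beta_j-\alpha_j)\D(r)$ is a nonnegative combination of two members of that cone, $\D(p_{j-1})\prec\D(p_j)$ follows. This is exactly how the paper argues.

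Second, and more seriously, the transitivity of the chain $\D(p_0)\prec\D(p_1)\prec\cdots\prec\D(p_n)$ is a genuine gap, not a technicality: $\prec$ is not transitive in general, and the polynomials $\D(p_j)$ do not lie in $\E_d$ or any other family where Proposition~\ref{poset} is known to apply, so ``leaning on a common family'' is not available here. The mechanism by which \cite[Lemma 2.3]{B06} composes a chain is that one must also know that the two \emph{endpoints} of the chain interlace. The paper therefore extends the chain so that it runs from $(x+1)^n$ down to $x^n$, making the endpoints $A_n=\D((x+1)^n)$ and $d_n=\D(x^n)$, and then proves $A_n\prec d_n$ separately by a reciprocal trick: applying $\I_n$ to the chain reverses every interlacing, and since $\I_n(A_n)=xA_n$ and $\I_n(d_n)=d_n$ one obtains the longer chain $A_n\prec\cdots\prec d_n\prec\cdots\prec xA_n$ whose endpoints satisfy $A_n\prec xA_n$; now \cite[Lemma 2.3]{B06} applies and yields $\D(p)\prec\D(q)$ for any two members of the chain. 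Without this step, or some substitute for it, your proof does not close.
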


\begin{proof}
We prove the statement by induction on the degree of $p$ and $q$.  
Suppose first that $p$ and $q$ differ by exactly one zero and satisfy $p\leq q$.
Then $p = (x-\alpha)h$ and $q = (x-\beta)h$ for some $\alpha,\beta\in[-1,0]$ with $\alpha\leq \beta$ and $h$ a polynomial with all of its zeros in the interval $[-1,0]$.  
Thus, 
\[
\D(p) = \D(q)+(\beta-\alpha)\D(h).
\]
Since $\D(q)\prec\D(q)$ and $\D(h)\prec\D(q)$, by Lemma~\ref{lem: deranged map}, it follows from Proposition \ref{cones} that $\D(p)\prec \D(q)$.

Now, suppose that $p$ and $q$ have degree $n$, have all zeros in the interval $[-1,0]$, and that $p\leq q$.  
Then there exists a sequence of polynomials $(x+1)^n=h_0 \leq h_1 \leq \cdots \leq h_m=x^n$ containing 
$p$ and $q$ such that $h_{i-1}$ and $h_{i}$ differ only by one zero for all $i \in [n]$. 
By the preceding argument, we have that 
\[
A_n = \D(h_0)\prec\D(h_1)\prec\cdots\prec\D(h_{m-1})\prec\D(h_m) = d_n.
\]
To complete the proof it suffices to prove that $A_n\prec d_n$ since the result then follows by applying \cite[Lemma 2.3]{B06}.
To see this, notice first that the preceding argument implies that we have a chain of interlacing relations:
\[
A_n = p_0\prec p_1\prec\cdots\prec p_{m-1}\prec p_m = d_n.
\]
Let $q_i:=\I_n(p_i)(x)$ for all $0\leq i\leq m$, and notice that $q_{i}\prec q_{i-1}$.  
Moreover, since $\I_n(A_n) = xA_n$ and $\I_n(d_n) = d_n$ then
\[
A_n = p_0\prec\cdots\prec p_{m-1}\prec p_m = d_n = q_m\prec q_{m-1}\prec\cdots\prec q_0 = xA_n.
\]
Since $A_n\prec xA_n$, we have $A_n\prec d_n$ by \cite[Lemma 2.3]{B06}, which completes the proof.
\end{proof}

\begin{corollary}
\label{cor: deranged map}
Suppose $p =\sum_{k=0}^nc_kx^k(x+1)^{n-k}$, where $c_k\geq0$ for all $0\leq k \leq n$. 
Then $\D(p)$ is real-rooted, $A_n\prec \D(p)\prec d_n$, and
\[
\D(p)\prec \I_n(\D(p)).
\]
In particular, $A_n\prec d_n$.
\end{corollary}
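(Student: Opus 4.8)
The statement to prove is Corollary~\ref{cor: deranged map}: for $p = \sum_{k=0}^n c_k x^k(x+1)^{n-k}$ with all $c_k \geq 0$, the polynomial $\D(p)$ is real-rooted, satisfies $A_n \prec \D(p) \prec d_n$, and $\D(p) \prec \I_n(\D(p))$; and in particular $A_n \prec d_n$. The main tool is Theorem~\ref{thm: deranged map}, which tells us that $\D$ is order-preserving for the relation $\leq$ on polynomials with zeros in $[-1,0]$, sending $\leq$ to $\prec$.

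First I would dispatch the real-rootedness and the sandwich $A_n \prec \D(p) \prec d_n$. The polynomial $q := \E^{-1}$ is not needed here; rather, note that since each $c_k \geq 0$ and $x^k(x+1)^{n-k}$ has all its zeros in $[-1,0]$, the polynomial $p$ itself — being a nonnegative combination — need not a priori be real-rooted, so I must be slightly careful. The cleaner route: by Proposition~\ref{fundE}, $\E(p') \in \E_n$ for suitable $p'$... but actually the relevant fact is that $x^k(x+1)^{n-k}$ is, for each fixed $k$, a polynomial with zeros in $[-1,0]$ satisfying $(x+1)^n \leq x^k(x+1)^{n-k} \leq x^n$ in the $\leq$ order (its multiset of roots is $k$ copies of $0$ and $n-k$ copies of $-1$, which is coordinatewise between the all-$(-1)$ and all-$0$ multisets). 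So by Theorem~\ref{thm: deranged map}, $A_n = \D((x+1)^n) \prec \D(x^k(x+1)^{n-k}) \prec \D(x^n) = d_n$ for every $k$. Since $A_n$ is real-rooted and not identically zero, Proposition~\ref{cones} (the cone $\{q : A_n \prec q\}$ is convex) gives $A_n \prec \sum_k c_k \D(x^k(x+1)^{n-k}) = \D(p)$; and similarly, since $\{q : q \prec d_n\}$ is a convex cone, $\D(p) \prec d_n$. In particular $\D(p)$ is real-rooted, and taking $p = x^0(x+1)^n$ (all other $c_k = 0$) already yields $A_n \prec d_n$ — though this last assertion is also immediate from the chain displayed in the proof of Theorem~\ref{thm: deranged map}.

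Next, the interlacing $\D(p) \prec \I_n(\D(p))$. Here I would use the involution structure. Observe that $\I_n \circ \D$ behaves well on the basis $x^k(x+1)^{n-k}$: since $\I_n(d_n) = d_n$ and $\I_n(A_n) = xA_n$ (recorded in the proof of Theorem~\ref{thm: deranged map}), and more generally $\D(x^k(x+1)^{n-k})$ has degree $n$ when $k \geq 1$ and degree $n$ with the $d_n$-type behavior, I need the relation $\I_n(\D(x^k(x+1)^{n-k})) = \D(x^{n-k}(x+1)^{k})$-type symmetry. Indeed, reflecting the root multiset of $x^k(x+1)^{n-k}$ through the symmetry swapping $0 \leftrightarrow -1$ sends it to $x^{n-k}(x+1)^{k}$; combined with the way $\D$ interacts with reciprocation (which should follow because each $d_k$ is palindromic, $x^k d_k(1/x) = d_k(x)$, a fact used in the proof of Lemma~\ref{lem: deranged map}), one gets $\I_n(\D(x^k(x+1)^{n-k})) = \D(x^{n-k}(x+1)^{k})$. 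Then from Theorem~\ref{thm: deranged map}, $\D(x^k(x+1)^{n-k}) \prec d_n = \I_n(d_n) \prec \I_n(\D(x^k(x+1)^{n-k})) = \D(x^{n-k}(x+1)^k)$... and I would chain through $d_n$ in the middle exactly as in the last paragraph of the proof of Theorem~\ref{thm: deranged map}, namely: for each $k$, $\D(x^k(x+1)^{n-k}) \prec d_n$ and $d_n \prec \I_n(\D(x^k(x+1)^{n-k}))$, so by \cite[Lemma 2.3]{B06} and the fact $A_n \prec xA_n$, $\D(x^k(x+1)^{n-k}) \prec \I_n(\D(x^k(x+1)^{n-k}))$; then since $\I_n$ is linear, summing with weights $c_k$ and invoking Proposition~\ref{cones} twice (once to absorb the sum on each side, using the already-established $\D(p) \prec d_n \prec \I_n(\D(p))$ as the pivot) yields $\D(p) \prec \I_n(\D(p))$.

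**Main obstacle.** The delicate point is the compatibility of $\D$ with reciprocation, i.e., establishing $\I_n(\D(g)) = \D(\I_n(g))$ or at least the specific identity $\I_n(\D(x^k(x+1)^{n-k})) = \D(x^{n-k}(x+1)^k)$. This is where a combinatorial symmetry of the derangement polynomials is needed (the palindromicity $\I_{k-1}(d_k) = d_k$ for $k \geq 1$, together with degree bookkeeping since $\D(x^k)=d_k$ has degree $k$ when $k\geq 2$, degree $0$ when $k \leq 1$). Getting the degree-counting exactly right so that $\I_n$ (reciprocal with respect to $n$, not with respect to individual degrees) behaves as claimed is the step I would be most careful about; everything else is a routine application of Proposition~\ref{cones}, Theorem~\ref{thm: deranged map}, and \cite[Lemma 2.3]{B06} in the pattern already rehearsed in the proof of Theorem~\ref{thm: deranged map}.
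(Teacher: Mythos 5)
Your first part ($A_n \prec \D(p) \prec d_n$, hence real-rootedness of $\D(p)$) is exactly the paper's argument: apply Theorem~\ref{thm: deranged map} to the coordinatewise inequalities $(x+1)^n \leq x^k(x+1)^{n-k} \leq x^n$ of root multisets, then use Proposition~\ref{cones} to absorb the nonnegative combination.

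The second part, however, hinges on an identity that is false. You single out $\I_n(\D(x^k(x+1)^{n-k})) = \D(x^{n-k}(x+1)^k)$ as the crux and the step you "would be most careful about," but $\D$ does not commute with reciprocation in this way: already for $n=2$, $k=0$ the left side is $\I_2(A_2) = x+x^2 = xA_2$ while the right side is $\D(x^2) = d_2 = x$; more generally $\I_n(A_n) = xA_n \neq d_n = \D(x^n)$. So the "main obstacle" cannot be overcome as stated. Fortunately it is also unnecessary. All that is needed --- and all the paper uses --- is that $\I_n$ reverses $\prec$ on real-rooted polynomials with nonnegative coefficients: applying $\I_n$ to the already-established chain $A_n \prec \D(p) \prec d_n$ gives $d_n = \I_n(d_n) \prec \I_n(\D(p)) \prec \I_n(A_n) = xA_n$, hence the single chain $A_n \prec \D(p) \prec d_n \prec \I_n(\D(p)) \prec xA_n$ for $\D(p)$ itself, with no term-by-term analysis of the basis elements. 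One then perturbs the left endpoint to $(x-\alpha)A_n$ for $\alpha<0$ small so that the outer pair satisfies $(x-\alpha)A_n \prec xA_n$, and Lemma 2.3 of \cite{B06} delivers $\D(p) \prec \I_n(\D(p))$. Your closing sentences gesture at exactly this chain-through-$d_n$ argument, so the repair is to delete the false commutation identity and the per-basis-element detour, and run the Lemma 2.3 step once, directly on $\D(p)$.
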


\begin{proof}
Since $\D(p) = \sum_{k=0}^nc_k\D(x^k(x+1)^{n-k})$, Theorem~\ref{thm: deranged map} together with Proposition \ref{cones} imply 
\[
A_n\prec \D(p)\prec d_n.
\]
Since $\I_n(A_n) = xA_n$, $\I_n(d_n) = d_n$, and $A_n \prec xA_n$, we have
\[
A_n \prec \D(p)\prec d_n \prec \I_n(\D(p))\prec xA_n, 
\]
and hence 
\[
(x-\alpha)A_n \prec \D(p)\prec d_n \prec \I_n(\D(p))\prec xA_n, 
\]
for $\alpha<0$ sufficiently small. 
Since $(x-\alpha)A_n \prec xA_n$, the proof now follows from \cite[Lemma 2.3]{B06}. 
\end{proof}

\subsection{The colored derangement polynomials and some local $h$-polynomials}
\label{subsec: the colored derangement polynomials}
Using Corollary~\ref{cor: deranged map}, we can now show that all colored derangement polynomials $d_{n,r}$ satisfy the interlacing properties of Theorem~\ref{thm: interlacing TFAE}, thereby strengthening and generalizing the results of \cite{CTZ09,C09}.  
We then apply this result to answer some recent conjectures on families of local $h$-polynomials.
\begin{theorem}
\label{thm: interlacing properties for derangement polynomials}
For positive integers $n$ and $r$ the derangement polynomial $d_{n,r}$ satisfies the interlacing property
\[
\I_n(d_{n,r})\prec d_{n,r}.
\]
In particular, $d_{n,r}$ is both real-rooted and alternatingly increasing.
\end{theorem}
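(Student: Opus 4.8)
The plan is to reduce the statement for general $r$ to the results already established for the classical derangement polynomials $d_n$ and Eulerian polynomials $A_n$, via the deranged map $\D$ and a colored analogue of the ``$(rx)^k(rx+1)^{n-k}$'' expansion used in Theorem~\ref{thm: colored eulerian polynomials}. The key point is that there should be a combinatorial identity expressing $d_{n,r}(x)$ (or $\I_n(d_{n,r})$) in the form $\D(p)$ for some $p = \sum_{k} c_k x^k(x+1)^{n-k}$ with $c_k \ge 0$; once this is in hand, Corollary~\ref{cor: deranged map} immediately gives $\D(p)\prec \I_n(\D(p))$, which is exactly the desired interlacing. So the first and main task is to identify the right nonnegative expansion.

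\textbf{Step 1: A recursion or explicit formula for $d_{n,r}$ in terms of $d_n$.} First I would recall (or derive, by a standard fixed-point/color argument analogous to the one relating $A_{n,r}$ to colored compositions) a decomposition of the set $\mathfrak{D}_{n,r}$ according to which indices are fixed points with a nonzero color and which indices carry which colors. Each $(\pi,z)\in\Z_r\wr\mathfrak{S}_n$ with underlying permutation $\pi$ and $\exc(\pi,z)=\exc(\pi)+\#\{k : \pi_k=k, z_k>0\}$ contributes $x^{\exc(\pi,z)}$, and summing over the $z$-coordinates (with the derangement-of-color-$0$ restriction) should produce, for a fixed $\pi$ with $j$ fixed points, a factor like $(r-1+x\cdot(\text{something}))$ per fixed point and $r$ per non-fixed index. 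The upshot I expect is a clean formula of the shape
\[
d_{n,r}(x) = \sum_{\pi\in\mathfrak{S}_n} \big((r-1)x + x\big)^{\mathrm{fix}(\pi)}\, r^{\,n-\mathrm{fix}(\pi)} \, x^{\exc(\pi)} \cdot (\text{correction}),
\]
or more usefully an identity of the form $d_{n,r}(x) = \D\big((rx)^{?}(rx+1)^{?}\big)$ or a nonnegative combination thereof. Getting the bookkeeping of colors on fixed versus non-fixed points exactly right — and in particular checking that all resulting coefficients $c_k$ are nonnegative — is the step I expect to be the main obstacle.

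\textbf{Step 2: Invoke the machinery.} Once $d_{n,r}$ (or its reciprocal) is written as $\D(p)$ with $p=\sum_k c_k x^k(x+1)^{n-k}$, $c_k\ge 0$, Corollary~\ref{cor: deranged map} yields $A_n \prec \D(p)\prec d_n$ and $\D(p)\prec \I_n(\D(p))$. If the formula expresses $d_{n,r}$ itself as $\D(p)$, we are done immediately: $\I_n(d_{n,r})\prec d_{n,r}$ follows from $\D(p)\prec\I_n(\D(p))$ after noting $-q\prec \I(q)$ translates to $\I(q)\prec q$ in the stated orientation, or more directly by applying Theorem~\ref{thm: interlacing TFAE}(4). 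If instead the natural formula produces $\I_n(d_{n,r}) = \D(p)$, the same corollary gives the interlacing in the required direction after one application of $p\prec q \Leftrightarrow -q\prec p$. Either way, real-rootedness and the alternatingly increasing property then come for free from Remark~\ref{rmk: alternatingly increasing connection} together with Theorem~\ref{thm: interlacing TFAE}.

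\textbf{Step 3: Base/degenerate cases and edge conditions.} I would separately check small $n$ (e.g. $n=1$, where $d_{1,r}(x) = r-1$ is a constant, hence trivially real-rooted and symmetric, so $\I\prec$ holds) and confirm that the $r=1$ case recovers the known statement $\I_n(d_n)\prec d_n$ — which, reassuringly, is already contained in Corollary~\ref{cor: deranged map} by taking $p=x^n$ so that $\D(p)=d_n$ and $\I_n(d_n)=d_n$, or by taking $p=(x+1)^n$ together with the chain displayed in the proof of Theorem~\ref{thm: deranged map}. The only genuinely new content beyond Step~1 is verifying the nonnegativity and the precise degree bookkeeping; no further analytic input should be needed since all the interlacing-preservation lemmas (Proposition~\ref{cones}, Theorem~\ref{thm: deranged map}, Corollary~\ref{cor: deranged map}) are already available.
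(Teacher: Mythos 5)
Your overall strategy is exactly the paper's: write $\I_n(d_{n,r})=\D(p)$ for a nonnegative combination $p=\sum_k c_k x^k(x+1)^{n-k}$ and invoke Corollary~\ref{cor: deranged map}. But the entire content of the theorem lies in your Step~1, which you leave as an expectation rather than a proof, and your guessed formula is not quite in the right shape (you sum over all of $\mathfrak{S}_n$ with a per-fixed-point factor and an unspecified ``correction''). The identity you need is
\[
d_{n,r}(x)=\sum_{k=0}^{n}\binom{n}{k}\bigl((r-1)x\bigr)^{n-k}r^{k}\,d_k(x),
\]
obtained by conditioning on the fixed-point set $F$ of the underlying permutation: every element of $F$ must carry a nonzero color and is an excedance (factor $(r-1)x$ each), while the complement carries an ordinary derangement of a $k$-set with arbitrary colors (factor $r^k d_k(x)$). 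Since $\I_n(x^{n-k}d_k)=\I_k(d_k)=d_k$, this gives $\I_n(d_{n,r})=\sum_k\binom{n}{k}(r-1)^{n-k}r^k d_k=\D\bigl((rx+(r-1))^n\bigr)$, and the expansion $(rx+(r-1))^n=\bigl(x+(r-1)(x+1)\bigr)^n=\sum_k\binom{n}{k}(r-1)^{n-k}x^k(x+1)^{n-k}$ exhibits the required nonnegative coefficients. Corollary~\ref{cor: deranged map} then yields $\I_n(d_{n,r})=\D(p)\prec\I_n(\D(p))=d_{n,r}$ directly, because $\I_n$ is an involution; the extra application of $p\prec q\Leftrightarrow -q\prec p$ you anticipate in Step~2 is not needed.

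Two smaller corrections. First, $d_{1,r}(x)=(r-1)x$, not the constant $r-1$: a fixed point of nonzero color counts as an excedance under Steingr\'imsson's definition, and this is precisely why the factor $(r-1)x$ (rather than $r-1$) appears per fixed point in the identity above, so the convention matters for getting Step~1 right. Second, your displayed guess $\exc(\pi,z)=\exc(\pi)+\#\{k:\pi_k=k,\ z_k>0\}$ is correct, but the clean decomposition comes from restricting to the non-fixed points and using $d_k$, not from summing over all of $\mathfrak{S}_n$. With the identity supplied, the rest of your argument is sound and coincides with the paper's proof.
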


\begin{proof}
For $r=1$, the statement follows from the fact that $\I_n(d_n) = d_n$, and that $d_n$ has only real-zeros.  
Let $r>1$.  
Then  
\[
d_{n,r} = \sum_{k=0}^n{n\choose n-k}(r-1)^{n-k}r^kx^{n-k}d_k, 
\]
and since $\I_n(d_n)= d_n$ 
\[
\I_n(d_{n,r}) = \sum_{k=0}^n{n\choose n-k}(r-1)^{n-k}r^kd_k = \D((rx+(r-1))^n).
\]
By expanding $(rx+(r-1))^n$ as
\[
(rx+(r-1))^n = (x+ (r-1)(x+1))^n = \sum_{k=0}^n{n\choose k}(r-1)^{n-k}x^k(x+1)^{n-k},
\]
and applying Corollary~\ref{cor: deranged map} we see that 
\[
\I_n(d_{n,r})= \D(rx+(r-1))^n \prec \I_n \D(rx+(r-1))^n=d_{n,r},
\]
from which the theorem follows. 
(Recall from Remark~\ref{rmk: alternatingly increasing connection} that $\I_n(d_{n,r})\prec d_{n,r}$ implies that $d_{n,r}$ is both real-rooted and alternatingly increasing.
\end{proof}

An immediate consequence of Theorem~\ref{thm: interlacing properties for derangement polynomials} is that the $\I_n$-decomposition $(a,b)$ of $d_{n,r}$ is always real-rooted.  
The polynomial $a$ was shown to be the local $h$-polynomial of the $r^{th}$ edgewise subdivision of the barycentric subdivision of a simplex in \cite[Theorem 1.2]{A14}.
(We refer the reader to \cite{A16} for all necessary definitions pertaining to local $h$-polynomials and subdivisions.)
This polynomial, as well as the polynomial $a$ in $(a,b)$, was conjectured by Athanasiadis to be real-rooted \cite[Question 4.11]{A16}, \cite[Conjecture 2.30]{A17}, \cite[Conjecture 3.7.10]{S13}.  
An affirmative answer to these conjectures is now an immediate corollary of Theorem~\ref{thm: interlacing properties for derangement polynomials}.  

\begin{corollary}
\label{cor: real-rooted I-decomposition for derangement polynomials}
For positive integers $n$ and $r$, the $\I_n$-decomposition of $d_{n,r}$ is real-rooted.  
Consequently, the local $h$-polynomial of the $r^{th}$ edgewise subdivision of the barycentric subdivision of a simplex is real-rooted.
\end{corollary}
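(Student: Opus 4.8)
The plan is to deduce Corollary~\ref{cor: real-rooted I-decomposition for derangement polynomials} almost immediately from Theorem~\ref{thm: interlacing properties for derangement polynomials}, using the machinery assembled in Section~\ref{sec: the real-rootedness properties of symmetric decompositions}. Theorem~\ref{thm: interlacing properties for derangement polynomials} asserts that $\I_n(d_{n,r}) \prec d_{n,r}$, which is precisely condition~(4) of Theorem~\ref{thm: interlacing TFAE} applied to $p = d_{n,r}$ with $d = n$. Before invoking that equivalence, I must check its hypothesis: that the $\I_n$-decomposition $(a,b)$ of $d_{n,r}$ has both $a$ and $b$ with only nonnegative coefficients. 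This is the one substantive thing to verify, and I expect it to follow from known facts about the derangement polynomials --- their coefficients are nonnegative by definition, and the nonnegativity of $a$ and $b$ is exactly the statement that $d_{n,r}$ is alternatingly increasing, which is part of the conclusion of Theorem~\ref{thm: interlacing properties for derangement polynomials} (via Remark~\ref{rmk: alternatingly increasing connection}). Alternatively, one can cite that $a$ is a local $h$-polynomial \cite[Theorem 1.2]{A14}, hence has nonnegative coefficients, and then $b$ inherits nonnegativity; but the cleanest route is simply to note the alternatingly increasing property already gives both.

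Granting the hypothesis, Theorem~\ref{thm: interlacing TFAE} tells us that condition~(4), $\I_n(d_{n,r}) \prec d_{n,r}$, is equivalent to condition~(1), $b \prec a$. By the definition of $\prec$, the relation $b \prec a$ forces both $a$ and $b$ to be real-rooted. Hence the $\I_n$-decomposition of $d_{n,r}$ is real-rooted, which is the first assertion of the corollary.

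For the second assertion, I would invoke \cite[Theorem 1.2]{A14}, which identifies the polynomial $a$ in the $\I_n$-decomposition of $d_{n,r}$ with the local $h$-polynomial of the $r$th edgewise subdivision of the barycentric subdivision of a simplex. Since we have just shown $a$ is real-rooted, the claimed real-rootedness of that local $h$-polynomial follows directly. (One should be slightly careful that the normalization/indexing conventions for the symmetric decomposition in \cite{A14} match those of Lemma~\ref{lem: symmetric I-decomposition}; this is a bookkeeping check rather than a mathematical obstacle, since the symmetric decomposition is unique.)

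The main --- indeed only --- obstacle is the hypothesis-checking in the first paragraph: one needs the $a$- and $b$-polynomials to have nonnegative coefficients in order to apply Theorem~\ref{thm: interlacing TFAE}. As noted, this is not a real difficulty because Theorem~\ref{thm: interlacing properties for derangement polynomials} already establishes that $d_{n,r}$ is alternatingly increasing, and a polynomial with nonnegative coefficients is alternatingly increasing precisely when the $a$ and $b$ of its symmetric decomposition are unimodal with nonnegative coefficients (this is recalled in the introduction). So the whole argument is a short chain of citations: Theorem~\ref{thm: interlacing properties for derangement polynomials} $\Rightarrow$ (via Theorem~\ref{thm: interlacing TFAE}) real-rootedness of $a,b$ $\Rightarrow$ (via \cite[Theorem 1.2]{A14}) real-rootedness of the local $h$-polynomial.
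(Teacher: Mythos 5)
Your proposal follows essentially the same route as the paper: combine Theorem~\ref{thm: interlacing properties for derangement polynomials} with the equivalence in Theorem~\ref{thm: interlacing TFAE} to get real-rootedness of $a$ and $b$, then cite \cite[Theorem 1.2]{A14} to identify $a$ with the local $h$-polynomial. The paper's own proof is exactly this two-sentence chain.

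One small caution about your hypothesis check. You justify the nonnegativity of the coefficients of $a$ and $b$ by pointing to the alternatingly increasing conclusion of Theorem~\ref{thm: interlacing properties for derangement polynomials}, but that conclusion is itself extracted from $\I_n(d_{n,r}) \prec d_{n,r}$ via Remark~\ref{rmk: alternatingly increasing connection}, which rests on the very equivalence (4)$\Leftrightarrow$(1) of Theorem~\ref{thm: interlacing TFAE} whose hypothesis you are trying to verify. So as stated, the ``cleanest route'' is formally circular. The non-circular route you mention in passing is the right one: nonnegativity of $a$ is a prior fact (it is a local $h$-polynomial, hence nonnegative by Stanley's theorem on local $h$-vectors of quasi-geometric subdivisions, independently of anything in this paper), and nonnegativity of $b$ is likewise established in the literature on these symmetric decompositions of $d_{n,r}$. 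The paper glosses over this point as well, so this is a fair observation on your part, but the resolution should not lean on the alternatingly increasing conclusion of Theorem~\ref{thm: interlacing properties for derangement polynomials}.
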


\begin{proof}
The real-rootedness of $a$ and $b$ follows from combining Theorem~\ref{thm: interlacing properties for derangement polynomials} with Theorem~\ref{thm: interlacing TFAE}.  
The consequences for local $h$-polynomials then follows from \cite[Theorem 1.2]{A14}.
\end{proof}

\begin{remark}
\label{rmk: second proof}
We note that a second proof of Corollary~\ref{cor: real-rooted I-decomposition for derangement polynomials} was (subsequently) given by the second author and N.~Gustafsson in \cite{GS18}.  
\end{remark}

\section{$h^\ast$-polynomials for Combinatorially-Positive Valuations of Lattice Zonotopes}
\label{sec: h-polynomials of lattice zonotopes}

Let $P\subset \R^n$ be a $d$-dimensional lattice polytope; i.e., a polytope whose vertices all lie in the lattice $\Z^n$.  
The \emph{Ehrhart function} of $P$ is the lattice-point enumerator $i(P;m) :=\left|mP\cap\Z^n\right|$, where $m\in \Z_{\ge0}$ and $mP:=\{mp : p\in P\}$ is the $m^{th}$ \emph{dilate} of $P$.  
The \emph{Ehrhart series} of $P$ is the generating function
\[
\sum_{m\ge0}i(P;m)x^m = \frac{h_0^\ast+h_1^\ast x+\cdots +h_d^\ast x^d}{(1-x)^{d+1}}.
\]
The polynomial $h^\ast(P;x) :=h_0^\ast+h_1^\ast x+\cdots +h_d^\ast x^d$ is called the \emph{(Ehrhart)} $h^\ast$-\emph{polynomial} of $P$, and it always has only nonnegative integer coefficients \cite{S80}.  
The function $i(P;m)$ is additionally a polynomial of degree $d$ \cite{E62}, called the \emph{Ehrhart polynomial} of $P$. 
The distributional properties of $h^\ast(P;x)$, such as unimodality and real-rootedness, are highly-investigated \cite{B16}, and the following observation has recently raised new questions about the alternatingly increasing property for $h^\ast$-polynomials.
\begin{theorem}
\label{thm: betke mcmullen}
\cite[Theorem 10.5]{BR07}
Let $P$ be a $d$-dimensional lattice polytope containing a lattice point in its relative interior.  
Then both polynomials in the $\I_d$--decomposition of $h^\ast(P;x)$ have only nonnegative integer coefficients.  
\end{theorem}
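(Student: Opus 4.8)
The plan is to realize both members of the $\I_d$-decomposition of $h^\ast(P;x)$ as the $h$-polynomials of explicit Cohen--Macaulay graded modules built from the Ehrhart ring of $P$, and then to invoke the nonnegativity of the $h$-polynomial of any Cohen--Macaulay graded module. After replacing $P$ by a lattice-equivalent copy I may assume that $P\subset\R^d$ is full-dimensional and that the interior lattice point is the origin, so that $\mathbf0\in P^\circ\cap\Z^d$ (this changes neither $h^\ast(P;x)$ nor $a$ nor $b$). Fix an infinite field $\Bbbk$, put $C=\mathrm{cone}(P\times\{1\})\subseteq\R^{d+1}$, and let $R=\Bbbk[C\cap\Z^{d+1}]$ be the Ehrhart ring, graded by the last coordinate, so that
\[
\mathrm{Hilb}(R;x)=\sum_{m\ge0}i(P;m)x^m=\frac{h^\ast(P;x)}{(1-x)^{d+1}}.
\]
The semigroup $C\cap\Z^{d+1}$ is pointed and normal (it is saturated in $\Z^{d+1}$), so by Hochster's theorem $R$ is a Cohen--Macaulay normal domain of Krull dimension $d+1$, and by the Danilov--Stanley description of the canonical module, $\omega_R=\Bbbk[C^\circ\cap\Z^{d+1}]$ as a monomial ideal of $R$, with
\[
\mathrm{Hilb}(\omega_R;x)=\sum_{m\ge1}i(P^\circ;m)x^m=\frac{x\,\I_d(h^\ast(P;x))}{(1-x)^{d+1}},
\]
the second equality being Ehrhart--Macdonald reciprocity (equivalently the duality $\mathrm{Hilb}(\omega_R;x)=(-1)^{d+1}\mathrm{Hilb}(R;1/x)$).

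For the $a$-polynomial, the short exact sequence $0\to\omega_R\to R\to R/\omega_R\to 0$ gives, using the formulas in Lemma~\ref{lem: symmetric I-decomposition},
\[
\mathrm{Hilb}(R/\omega_R;x)=\frac{h^\ast(P;x)-x\,\I_d(h^\ast(P;x))}{(1-x)^{d+1}}=\frac{a(x)}{(1-x)^d}.
\]
The depth lemma yields $\mathrm{depth}(R/\omega_R)\ge\min(\mathrm{depth}\,R,\mathrm{depth}\,\omega_R-1)=d$, while $\omega_R$ is a nonzero ideal in the domain $R$, so $\dim(R/\omega_R)\le d$; hence $R/\omega_R$ is Cohen--Macaulay of dimension $d$ with $h$-polynomial $a(x)$, and therefore $a(x)$ has nonnegative (and obviously integer) coefficients.

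For the $b$-polynomial, the degree-$1$ monomial $t:=x^{(0,\dots,0,1)}$ lies in $C^\circ\cap\Z^{d+1}$ because $\mathbf0\in P^\circ$, and a one-line convexity check shows $c+(0,\dots,0,1)\in C^\circ$ for every $c\in C$; thus multiplication by $t$ is an injective degree-preserving homomorphism $R\to\omega_R(1)$, where $\omega_R(1)_m=(\omega_R)_{m+1}$. Its cokernel $M$ has
\[
\mathrm{Hilb}(M;x)=\frac{\I_d(h^\ast(P;x))-h^\ast(P;x)}{(1-x)^{d+1}}=\frac{b(x)}{(1-x)^d}
\]
by Lemma~\ref{lem: symmetric I-decomposition}. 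From $0\to R\to\omega_R(1)\to M\to 0$ the depth lemma gives $\mathrm{depth}\,M\ge d$, and since $R$ and $\omega_R$ both have rank one over $R$ the module $M$ is torsion, so $\dim M\le d$; hence $M$ is Cohen--Macaulay of dimension $d$ (or $M=0$, precisely when $P$ is reflexive and $b=0$), and $b(x)$, its $h$-polynomial, has nonnegative integer coefficients.

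The main obstacle is assembling the commutative-algebra inputs correctly --- normality of the semigroup, Hochster's Cohen--Macaulayness, the Danilov--Stanley identification of $\omega_R$, and the depth-versus-dimension bookkeeping in the two short exact sequences --- together with the care needed to conclude nonnegativity of the $h$-polynomial of a Cohen--Macaulay graded module over the possibly non-standard-graded ring $R$; this last point is handled by restricting scalars to a degree-$1$ Noether normalization $A=\Bbbk[x_0,\dots,x_d]\subseteq R$ coming from $d+1$ affinely independent lattice vertices of $P$, over which $R/\omega_R$ and $M$ remain Cohen--Macaulay of dimension $d$, and then reducing modulo a generic system of $d$ linear parameters (a regular sequence, by the Cohen--Macaulay property). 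An alternative, more combinatorial route replaces the two modules by direct Ehrhart-reciprocity computations identifying $a(x)/(1-x)^d$ and $b(x)/(1-x)^d$ with the lattice-point generating functions of the boundary complex $\partial P$ and of the shells $(m+1)P^\circ\setminus mP$, respectively; both coefficient sequences are manifestly nonnegative integers, but proving nonnegativity of the numerators $a$ and $b$ themselves still requires a Cohen--Macaulay-type input (for instance that the sphere $\partial P$ has a nonnegative Ehrhart $h^\ast$-vector), so the module-theoretic packaging above is the cleanest.
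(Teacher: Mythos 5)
The paper presents Theorem~\ref{thm: betke mcmullen} as a cited background result from Beck--Robins and gives no proof of its own, so there is no internal argument to compare yours against; I will assess your proposal on its own merits. Your commutative-algebra route is sound in its main structure: the identifications $\mathrm{Hilb}(R/\omega_R;x)=a(x)/(1-x)^d$ and $\mathrm{Hilb}(M;x)=b(x)/(1-x)^d$ follow correctly from Ehrhart--Macdonald reciprocity and the formulas for $a$ and $b$ in Lemma~\ref{lem: symmetric I-decomposition}; the multiplication-by-$t$ map $R\to\omega_R(1)$ is well-defined and injective precisely because $P$ has an interior lattice point; and the depth-lemma plus torsion/properness bookkeeping correctly shows $R/\omega_R$ and $M$ are Cohen--Macaulay of dimension $d$.

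The one step that needs repair is the claim that a degree-$1$ Noether normalization of $R$ comes from $d+1$ affinely independent lattice vertices of $P$. For a non-simplex polytope the monomials $x^{(v_i,1)}$ attached to such a choice of vertices generally do \emph{not} form a system of parameters: the shifted cones $(v_i,1)+C$ need not cover all but a bounded part of $C$, so $R/(x^{(v_0,1)},\dots,x^{(v_d,1)})R$ can be infinite-dimensional. What is true, and suffices, is that $R$ is module-finite over the standard-graded subalgebra $R'=\Bbbk[R_1]$ (being its normalization), so over your infinite field $\Bbbk$ a \emph{generic} sequence of $d$ linear forms in $R_1$ is a homogeneous system of parameters for each of $R/\omega_R$ and $M$; by Cohen--Macaulayness it is then a regular sequence, and the Artinian reduction has Hilbert function exactly $a$ (respectively $b$), proving nonnegativity and integrality. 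With that replacement the proof is complete. As you observe at the end, this algebraic packaging (via Hochster, Danilov--Stanley, and the two short exact sequences) is a genuinely different and arguably cleaner route than the triangulation-based combinatorial argument usually given for this decomposition.
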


A lattice polytope $P$ is said to be \emph{integrally closed} or \emph{IDP} if for all $m\in\Z_{>0}$ and $x\in mP\cap\Z^n$ there exist $x^{(1)},\ldots,x^{(m)}\in P\cap\Z^n$ such that $x = x^{(1)}+\cdots+x^{(m)}$.  
The following conjecture is stated in \cite{BJM16} who attribute it to \cite{SV13}.
\begin{conjecture}
\label{conj: idp}
\cite[Conjecture 1]{BJM16}
Let $P$ be an IDP lattice polytope with a lattice point in its relative interior.  
Then $h^\ast(P;x)$ is alternatingly increasing.
\end{conjecture}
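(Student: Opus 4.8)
We already know, from Theorem~\ref{thm: betke mcmullen}, that the polynomials $a$ and $b$ in the $\I_d$-decomposition of $h^\ast(P;x)$ have nonnegative coefficients; since (as recalled in the introduction) a polynomial is alternatingly increasing precisely when both halves of its symmetric decomposition are unimodal, the problem reduces to proving that $a$ and $b$ are unimodal. One should not expect to obtain this from the sharpest tool in this paper, the interlacing $\I_d(h^\ast(P;x))\prec h^\ast(P;x)$ (equivalently $b\prec a$): by Theorem~\ref{thm: interlacing TFAE} and Remark~\ref{rmk: alternatingly increasing connection} this would force $h^\ast(P;x)$ to be real-rooted, whereas the $h^\ast$-polynomial of an IDP polytope need not be real-rooted. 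The plan is instead to interpret $a$ and $b$ geometrically and then deduce unimodality from a Lefschetz-type theorem.

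The bridge is Ehrhart-Macdonald reciprocity. Writing $h^\ast(P^{\circ};x)$ for the numerator of $\sum_{m\ge1}|mP^{\circ}\cap\Z^n|x^m$, one has $h^\ast(P^{\circ};x)=x\,\I_d(h^\ast(P;x))$, so Lemma~\ref{lem: symmetric I-decomposition} gives
\[
a(x)=\frac{h^\ast(P;x)-h^\ast(P^{\circ};x)}{1-x}, \qquad b(x)=\frac{h^\ast(P^{\circ};x)/x-h^\ast(P;x)}{1-x}.
\]
Since $|mP\cap\Z^n|-|mP^{\circ}\cap\Z^n|$ counts lattice points on the boundary $\partial P$, the first formula says that $a$ is the $h^\ast$-polynomial of the lattice $(d-1)$-sphere $\partial P$, and symmetry of $a$ with respect to $d$ is then Dehn-Sommerville duality for that sphere; a parallel ``relative'' reading records $b$ in terms of the pair $(P,\partial P)$. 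The IDP hypothesis enters through Stanley's theorem that the section ring $k[P]$ is Cohen-Macaulay and standard graded: an Artinian reduction $A=k[P]/\Theta$ has $\operatorname{Hilb}(A;x)=h^\ast(P;x)$, and by the Danilov-Stanley description its canonical module $\omega_A$ has Hilbert series $h^\ast(P^{\circ};x)$; note also that faces of an IDP polytope are IDP, so the same applies to each facet of $P$. The concrete goal is then to realize $a$ as the $h$-polynomial of a Cohen-Macaulay — in fact doubly Cohen-Macaulay, being a homology sphere — complex triangulating $\partial P$, placing the argument squarely in the setting of Section~\ref{sec: h-polynomials of cohen-macaulay simplicial complexes}, and to treat $b$ in parallel using $xb = h^\ast(P;x)-a$ together with the canonical module.

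Granting this, unimodality of $a$ follows from the $g$-theorem for homology spheres (available through the recent work of Adiprasito and of Papadakis-Petrotou), and unimodality of $b$ from a relative Lefschetz statement for $(P,\partial P)$. The main obstacle — and the reason the conjecture remains open in general — is twofold. First, an IDP polytope need not possess a unimodular triangulation of $P$ or of $\partial P$, so there is no \emph{a priori} simplicial complex whose $h$-vector is $a$; the required Cohen-Macaulay structure must instead be produced algebraically, for instance from a generic Artinian reduction together with algebraic shifting. Second, Cohen-Macaulayness yields only nonnegativity, not unimodality: what is needed is a Weak Lefschetz Property for $A$ and for the boundary complex, which is not automatic. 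I would attack both points together by a degeneration argument: first settle the well-behaved case in which $P$ has a regular unimodular triangulation, where $a$ genuinely is the $h$-vector of a shellable sphere and the $g$-theorem applies directly; then, for general IDP $P$, degenerate $k[P]$ to a more tractable algebra with the same Hilbert series — a Stanley-Reisner ring when a suitable triangulation is available, and otherwise an algebra obtained by algebraic shifting — and transport the Weak Lefschetz Property back by semicontinuity. Verifying that such a degeneration exists with the right Hilbert series and retains enough Cohen-Macaulay and level structure for semicontinuity of the Lefschetz property to apply is the crux, and the step I expect to be genuinely hard.
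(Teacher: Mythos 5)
The statement you are proving is a \emph{conjecture} (Conjecture~\ref{conj: idp}, due to Schepers--Van Langenhoven and Beck--Jochemko--McCullough): the paper does not prove it and does not claim to. What the paper establishes is only the special case of lattice zonotopes (Corollary~\ref{cor: on idp conjecture}), and it does so by a completely different route: Dall's identity $i(\Zone;x)=\sum_i h_i^\ast(\Lambda)\,x^i(x+1)^{d-i}$ expressing the Ehrhart polynomial of a zonotope in terms of the $h^\ast$-vector of its Lawrence polytope, Stapledon's inequalities for that $h^\ast$-vector, and Theorem~\ref{thm: real-rooted I-decomposition}, which converts those inequalities into real-rootedness (hence unimodality) of $a$ and $b$. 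Your proposal, by contrast, aims at the general conjecture via the Betke--McMullen/Stapledon interpretation of $a$ as the boundary $h^\ast$-polynomial and a Lefschetz-type argument. That is a legitimate and well-known line of attack, but as you yourself say, the crucial step --- producing a Weak Lefschetz element for an Artinian reduction of $k[P]$, or a degeneration with matching Hilbert series to which a $g$-theorem applies --- is left entirely open. Since that step is the whole content of the conjecture, what you have written is a research program, not a proof, and it should not be presented as one.

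Two further points. First, your parenthetical ``faces of an IDP polytope are IDP'' is not a known fact; whether facets of integrally closed polytopes are integrally closed is a well-known open question (see Bruns--Gubeladze), so you cannot invoke it even as an auxiliary step. Second, your opening reduction is correct and matches the paper's framework: by Theorem~\ref{thm: betke mcmullen} the coefficients of $a$ and $b$ are nonnegative, so alternatingly increasing is equivalent to unimodality of $a$ and $b$; and you are right that one should not hope for $b\prec a$ in general, since by Theorem~\ref{thm: interlacing TFAE} that would force $h^\ast(P;x)$ to be real-rooted, which fails for some IDP polytopes. If you want a result you can actually prove with the tools of this paper, restrict to the zonotope case and follow Section~\ref{sec: h-polynomials of lattice zonotopes}.
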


In \cite{SV13}, the authors verified Conjecture~\ref{conj: idp} for lattice parallelpipeds, and in \cite{BJM16}, the authors extended these results to include centrally symmetric lattice zonotopes as well as coloop-free lattice zonotopes.  
In Subsection~\ref{subsec: alternatingly increasing property for lattice zonotopes} we generalize and strengthen these results by proving that the $h^\ast$-polynomial with respect to any \emph{combinatorially-positive valuation} \cite{JS15} of a lattice zonotope containing a lattice point in its relative interior has a real-rooted $\I$-decomposition.  
In Subsection~\ref{subsec: interlacing properties for centrally symmetric lattice zonotopes} we further strengthen these results by proving that any such $h^\ast$-polynomial for a centrally symmetric lattice zonotope satisfies the interlacing conditions of Theorem~\ref{thm: interlacing TFAE}.  
First, in Subsection~\ref{subsec: lattice-invariant valuations}, we recall the theory of combinatorially-positive valuations and their resulting generalization of Ehrhart $h^\ast$-polynomials, which will be considered in the coming subsections.   

\subsection{Lattice-invariant valuations}
\label{subsec: lattice-invariant valuations}
Let $\PP(\Z^n)$ denote the collection of lattice polytopes in $\R^n$.  
A \emph{(real) valuation} on lattice polytopes is a map $\varphi:\PP(\Z^n)\longrightarrow \R$ that satisfies
\[
\varphi(P\cup Q) = \varphi(P)+\varphi(Q)-\varphi(P\cap Q),
\]
whenever $P\cup Q$ and $P\cap Q$ are also lattice polytopes.  
A valuation $\varphi$ is further called \emph{translation-invariant} if $\varphi(P+z) = \varphi(P)$, for all $z\in\Z^n$ and $P\in\PP(\Z^n)$. 
McMullen proved that for all $m\in\Z_{\geq0}$ and $P\in\PP(\Z^n)$, the value $\varphi(mP)$ is given by a polynomial $i^\varphi(P;x)$ of degree at most $d$.  
Equivalently, there exist $h_0^\varphi(P),\ldots,h_d^\varphi(P)\in \R$ such that
\[
\sum_{m\ge0}i^\varphi(P;m)x^m = \frac{h_0^\varphi(P)+h_1^\varphi(P) x+\cdots +h_d^\varphi(P) x^d}{(1-x)^{d+1}}.
\]
In the special case when $\varphi(P)  = \left|P\cap\Z^n\right|$ the polynomial $i^\varphi(P;x)$ is the Ehrhart polynomial of $P$, and the polynomial $h^\varphi(P;x) = h_0^\varphi(P)+h_1^\varphi(P) x+\cdots +h_d^\varphi(P) x^d$ is the $h^\ast$-polynomial.  
Consequently, for a given valuation $\varphi$, the polynomial $h^\varphi(P;x)$ is called the $h^\ast$-\emph{polynomial of} $P$ \emph{with respect to $\varphi$}.  
In the case when $\varphi$ is a translation-invariant valuation such that $h^\varphi(P;x)$ has nonnegative coefficients for all $P$, we say that it is \emph{combinatorially positive} \cite{JS15}.  
Via a change of basis, the Ehrhart polynomial of a lattice polytope $P$ can also be expressed as
\[
i(P;x) = \sum_{j = 0}^df^\ast_j(P){x-1\choose j},
\]
where $f^\ast_j(P)\in\Z$ \cite[Theorem 3]{B12}.  
A valuation $\varphi:\PP(\Z^n)\longrightarrow \R$ is further called \emph{lattice-invariant} if $\varphi(T(P)) = \varphi(P)$ for all $P\in\PP(\Z^n)$ and every affine map satisfying $T(\Z^n) = Z^n$.  
We will make use of the following characterization of combinatorially positivity for lattice-invariant valuations.
\begin{theorem}
\label{thm: valuation characterization}
\cite[Theorem 7.1]{JS15}
Let $\varphi: \PP(\Z^n)\longrightarrow \R$ be a lattice-invariant valuation.  
Then $\varphi$ is combinatorially-positive if and only if there exist $\alpha_0,\ldots,\alpha_d\in \R_{\geq0}$ such that
\[
\varphi = \alpha_0f^\ast_0+\alpha_1f^\ast_1+\cdots+\alpha_df^\ast_d.
\]
\end{theorem}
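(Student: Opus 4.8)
This is Theorem~7.1 of Jochemko--Sanyal \cite{JS15}; the plan is to reduce it to finite-dimensional linear algebra. By the theorem of Betke and Kneser (see \cite{JS15}) together with the unimodular triangular change of basis recorded in \cite{B12}, the space of lattice-invariant valuations on $\PP(\Z^n)$ is finite-dimensional with $\{f^\ast_0,\ldots,f^\ast_d\}$ as a basis. Hence every lattice-invariant valuation $\varphi$ may be written uniquely as $\varphi = \alpha_0 f^\ast_0 + \cdots + \alpha_d f^\ast_d$ with $\alpha_j\in\R$, and the content of the theorem is that $\varphi$ is combinatorially positive precisely when all $\alpha_j\geq 0$. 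The two implications are proved separately.

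For the ``if'' direction, observe first that the combinatorially-positive valuations form a convex cone: for a fixed lattice polytope $P$ the map $\varphi\mapsto h^\varphi(P;x)$ is linear, and having nonnegative coefficients is preserved under nonnegative linear combinations. It therefore suffices to prove that each $f^\ast_j$ is itself combinatorially positive, i.e. that $h^{f^\ast_j}(P;x)$ has nonnegative coefficients for every lattice polytope $P$. Since $f^\ast_j(P)$ is the value at $1$ of the $j$-th forward difference of the Ehrhart polynomial of $P$, one has $i^{f^\ast_j}(P;m) = f^\ast_j(mP) = \sum_{\ell=0}^{j}(-1)^{j-\ell}\binom{j}{\ell}\, i\big((1{+}\ell)P;m\big)$, and hence
\[
h^{f^\ast_j}(P;x) \;=\; \sum_{\ell=0}^{j}(-1)^{j-\ell}\binom{j}{\ell}\, h^\ast\big((1{+}\ell)P;x\big);
\]
the task becomes showing that this alternating combination of $h^\ast$-polynomials of dilates has nonnegative coefficients. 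I would establish this by fixing a half-open triangulation of $P$: since $f^\ast_j$ is a valuation, $h^{f^\ast_j}(P;x)$ is additive over such a decomposition, which reduces the claim to a single half-open simplex, where the box decomposition of its dilates gives an explicit, manifestly nonnegative formula for the graded lattice-point count in question.

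For the ``only if'' direction I would test $\varphi$ on unimodular simplices. Fix $j$ and let $\Sigma$ be a unimodular $j$-simplex. Then $f^\ast_k(\Sigma)=0$ for $k>j$, so $h^{f^\ast_k}(\Sigma;x)=0$ for those $k$, and the coefficient of $x^j$ in $h^\varphi(\Sigma;x) = \sum_{k}\alpha_k\, h^{f^\ast_k}(\Sigma;x)$ isolates $\alpha_j$. Indeed, $[x^j]\,h^\ast(r\Sigma;x)$ equals the number of interior lattice points of $r\Sigma$, which is $\binom{r-1}{j}$; substituting this into the displayed formula gives $[x^j]\,h^{f^\ast_k}(\Sigma;x) = \sum_{\ell=0}^{k}(-1)^{k-\ell}\binom{k}{\ell}\binom{\ell}{j} = \delta_{jk}$, so $[x^j]\,h^\varphi(\Sigma;x) = \alpha_j$. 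Combinatorial positivity of $\varphi$ forces this coefficient to be nonnegative, hence $\alpha_j\geq 0$, and letting $j$ run over $0,\ldots,d$ completes this direction.

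The main obstacle is the ``if'' direction, namely the claim that each $f^\ast_j$ is combinatorially positive. Term-by-term estimates on the alternating sum $\sum_{\ell}(-1)^{j-\ell}\binom{j}{\ell} h^\ast((1{+}\ell)P;x)$ fail---its individual summands can have negative coefficients---so one really must exploit the rigidity of the $h^\ast$-vectors of honest (possibly half-open) polytopes, i.e. Stanley-type nonnegativity in the relative setting. Carrying out the half-open triangulation argument while keeping track of the grading is where the real work lies; the remaining ingredients, the Betke--Kneser basis and the test-polytope computation above, are comparatively routine.
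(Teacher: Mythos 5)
The paper does not prove this statement; it is imported verbatim as \cite[Theorem~7.1]{JS15}, so there is no in-paper argument to compare against. Judged on its own terms, your proposal follows the standard route and its completed part is sound: the reduction via Betke--Kneser and the triangular change of basis to $f^\ast_0,\ldots,f^\ast_n$ is the right starting point, and your ``only if'' computation is correct in detail --- for a unimodular $j$-simplex $\Sigma$ one has $h^{f^\ast_k}(\Sigma;x)\equiv 0$ for $k>j$, and for $k\le j$ the identity $\sum_{\ell=0}^{k}(-1)^{k-\ell}\binom{k}{\ell}\binom{\ell}{j}=\delta_{jk}$ (via $\binom{k}{\ell}\binom{\ell}{j}=\binom{k}{j}\binom{k-j}{\ell-j}$) does isolate $\alpha_j$ as the top coefficient of $h^\varphi(\Sigma;x)$.

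Two points on the ``if'' direction, which you rightly flag as the crux. First, a small imprecision: additivity of $h^{f^\ast_j}(\cdot\,;x)$ over a half-open triangulation does not follow from ``$f^\ast_j$ is a valuation'' per se --- the valuation axiom concerns closed polytopes and their unions/intersections, whereas half-open pieces are not polytopes. What you actually use is that lattice-point enumeration is additive over a set-theoretic partition $P=\bigsqcup_i H_i$ into half-open simplices, so $i(P;m)=\sum_i i(H_i;m)$ and hence $f^\ast_j(mP)=\sum_i f^\ast_j(mH_i)$; this is elementary but is a different mechanism than the inclusion--exclusion built into the valuation axiom, and it is worth stating correctly. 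Second, after reducing to a single half-open lattice simplex $H$, the remaining claim --- that the alternating combination $\sum_{\ell=0}^{j}(-1)^{j-\ell}\binom{j}{\ell}h^\ast\bigl((1{+}\ell)H;x\bigr)$ has nonnegative coefficients --- is exactly where a genuinely new idea is needed (a box/fundamental-parallelepiped count graded appropriately, as in Breuer's treatment of $f^\ast$-nonnegativity and in \cite{JS15} itself). You have identified the right target but not supplied the argument, so as written the proposal establishes necessity but only outlines sufficiency.
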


Suppose that $\varphi$ is a combinatorially-positive valuation.  
Then by Theorem~\ref{thm: valuation characterization} there exist $\alpha_0,\ldots,\alpha_d\in\R_{\geq0}$ such that for all lattice polytopes $P\in\PP(\Z^n)$
\[
\varphi(P) = \sum_{k=0}^d\alpha_kf^\ast_k(P).
\]
Define $i^k(P;x)$ to be the polynomial satisfying $i^k(P;m) = f^\ast_k(mP)$ for all $m\in\Z_{\geq0}$.  
Then
\[
i(P;xm) = i(mP;x) = \sum_{k\geq0}f^\ast_k(mP){x-1\choose k}.
\]
Thus, 
\begin{equation}
\label{nuk}
f^\ast_k(mP) = \Delta_x^k(i(P;xm))\Big|_{x=1}, 
\end{equation}
where $\Delta_x$ is the forward-difference operator $\Delta_x(f)(x)= f(x+1)-f(x)$, see \cite[Chapter 1.9]{S11}. 
Hence, there is a unique linear operator $T_k : \R[x] \longrightarrow \R[x]$ for which $T_k(i(P;x)) = i^k(P;x)$ for all polytopes $P$. 
The following theorem provides a description of the operator $T_k$ in terms of the Stirling numbers of the second kind, which we denote by $S(m,k)$, see \cite{S11}. 
\begin{theorem}
\label{thm: tk operator}
The operator $T_k$ is the unique operator satisfying 
\[
T_k(x^m) = k!S(m+1,k+1)x^m,
\]
for all $m \geq 0$. 
\end{theorem}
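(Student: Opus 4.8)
The plan is to identify the operator $T_k$ explicitly by computing its action on the monomial basis $\{x^m\}_{m\ge 0}$ and invoking uniqueness. Since $T_k$ is linear and is already known to be the unique operator with $T_k(i(P;x)) = i^k(P;x)$ for all lattice polytopes $P$, it suffices to exhibit enough polytopes to pin down $T_k$ on a spanning set of $\R[x]$, or more directly, to compute $T_k(x^m)$ from the defining formula \eqref{nuk}. Recall that $i^k(P;m) = f^\ast_k(mP) = \Delta_x^k(i(P;xm))\big|_{x=1}$. So first I would fix a polytope $P$ whose Ehrhart polynomial is a pure power, or more simply work formally: by linearity it is enough to determine the scalar $c_{m,k}$ such that $T_k(x^m) = c_{m,k} x^m$ (one should first check $T_k$ sends $x^m$ to a scalar multiple of $x^m$, which follows because $i(P;xm)$ as a polynomial in $x$, with $m$ treated as the indeterminate after the substitution, has the homogeneity needed — more precisely, $f^\ast_k$ is evaluated at the dilate and the substitution $x\mapsto xm$ turns a monomial $x^j$ into $m^j x^j$, so $\Delta_x^k$ acting in $x$ and then setting $x=1$ produces a constant times $m^j$).

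The key computation is then: for the monomial $x^m$, substituting $x \mapsto xm$ gives $(xm)^m$... more carefully, if $i(P;x) = x^j$ then $i(P;xm) = (xm)^j = m^j x^j$ as a polynomial in $x$, and $\Delta_x^k(x^j)\big|_{x=1}$ is a classical quantity. One has the well-known identity $\Delta_x^k(x^j)\big|_{x=0} = k!\, S(j,k)$ (the $k$-th finite difference of $x^j$ at $0$), but here we evaluate at $x=1$. Using the shift, $\Delta_x^k(x^j)\big|_{x=1} = \Delta^k$ applied to the sequence $(n+1)^j$ at $n=0$, which by a standard reindexing equals $\sum_{i}\binom{k}{i}(-1)^{k-i}(i+1)^j = k!\,S(j+1,k+1)$. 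This last equality is the combinatorial identity $S(j+1,k+1) = \frac{1}{k!}\sum_{i=0}^{k}(-1)^{k-i}\binom{k}{i}(i+1)^j$, which itself follows from the recurrence $S(j+1,k+1) = \sum_{\ell}\binom{j}{\ell}S(\ell,k)$ or directly from counting surjections. Therefore $T_k(x^m) = k!\,S(m+1,k+1)\,x^m$, and uniqueness of $T_k$ (already established in the text preceding the theorem) completes the proof.

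The main obstacle is bookkeeping: one must be careful about which variable $\Delta_x$ acts on and at which point the evaluation $x=1$ happens, since \eqref{nuk} has the substitution $x\mapsto xm$ buried inside, and it is easy to conflate the dilation parameter $m$ with the Ehrhart variable. The cleanest route is to note that $i^k(P;x)$ is characterized by its values $i^k(P;m) = f^\ast_k(mP)$ for $m \in \Z_{\ge 0}$, so one only needs the identity at nonnegative integers $m$, where everything is an honest finite sum; polynomiality in $m$ then extends it. I would also remark that one should justify that the monomials $x^m$ for $0 \le m \le d$ are realized (up to linear combination) as Ehrhart polynomials of actual lattice polytopes in sufficiently high dimension, so that "the unique operator with $T_k(i(P;x)) = i^k(P;x)$ for all $P$" is genuinely determined by its values on $\{x^m\}$ — but since the text has already asserted existence and uniqueness of $T_k$ as an operator on all of $\R[x]$, it is enough to verify that the operator $x^m \mapsto k!\,S(m+1,k+1)x^m$ agrees with \eqref{nuk}, which the computation above does.
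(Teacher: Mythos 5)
Your proposal is essentially the paper's own proof, just phrased slightly differently. The paper first extracts from equation~\eqref{nuk} the explicit formula $T_k(f)(x)=\sum_{i=0}^k(-1)^{k-i}\binom{k}{i}f((i+1)x)$ and then evaluates it at $f=x^m$; you instead plug $i(P;x)=x^j$ directly into~\eqref{nuk}, factor out $m^j$, and compute $\Delta_x^k(x^j)\big|_{x=1}=\sum_{i=0}^k(-1)^{k-i}\binom{k}{i}(i+1)^j$. In both cases the key numeric identity is $\sum_{i=0}^k(-1)^{k-i}\binom{k}{i}(i+1)^j = k!\,S(j+1,k+1)$, which the paper establishes by expanding $(i+1)^j$ binomially and using $\sum_i(-1)^{k-i}\binom{k}{i}i^\ell=k!\,S(\ell,k)$ together with $\sum_\ell\binom{j}{\ell}S(\ell,k)=S(j+1,k+1)$, while you invoke it as a known Stirling-number identity and sketch the same derivation. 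Your extra remarks on diagonal action and on which variable $\Delta_x$ acts are correct bookkeeping but not a different method. One small cleanup: the closing paragraph worrying about whether monomials are realized as Ehrhart polynomials is unnecessary here, since equation~\eqref{nuk} already gives a formula for $T_k$ as a linear operator on all of $\R[x]$, and the theorem only asks you to identify its action on the monomial basis.
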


\begin{proof}
From equation~\eqref{nuk} we see that 
\[
T_k(f)(x)= \sum_{i=0}^k(-1)^{k-i} \binom k i f((i+1)x).
\]
Hence, 
\begin{align*}
T_k(x^m)/x^m &= \sum_{i=0}^k(-1)^{k-i} \binom k i (i+1)^m \\
&= \sum_{j=0}^m \binom m j \sum_{i=0}^k (-1)^{k-i} \binom k i i^j \ \ \ \ \mbox{(see \cite[Chapter 1.9]{S11})}\\
&= \sum_{j=0}^m \binom m j k! S(j,k) = k!S(m+1,k+1), 
\end{align*}
\end{proof}
Notice that we now have the following relationship via the operator $T_k$ between the Ehrhart polynomial $i(P;x)$ and that of any combinatorially positive valuation:
\begin{equation}
\label{eqn: combinatorially-positive}
i^\varphi(P;x) = \sum_{k=0}^d\alpha_kT_k(i(P;x)).
\end{equation}
Thus, if the Ehrhart polynomial $i(P;x)$ has a particularly nice form, such as that of Theorem~\ref{thm: real-rooted I-decomposition}, we can hope to translate the nice form of $i(P;x)$ to a similar one for $i^\varphi(P;x)$.
The next lemma does precisely this in regards to the inequalities~\eqref{eqn: hibi} from Theorem~\ref{thm: real-rooted I-decomposition}.  
In the following, let $\phi_k(f)= T_k(f)/k!$. 
From Theorem~\ref{thm: tk operator} and the standard recursion for $S(m,k)$, namely, 
\[
S(m+1,k) = kS(m,k) + S(m,k-1)
\]
for $k>0$, $S(0,0) = 1$, and $S(n,0) = S(0,n) = 0$ for $n>0$,
we get that
\begin{equation}
\label{eqn: recursion}
\phi_k(xf)= (k+1)x\phi_k(f) + x\phi_{k-1}(f),
\end{equation}
for all $k \geq 1$. 
This recursion will be key in our analysis of the operator $T_k$ in the coming applications.
\begin{lemma}
\label{lem: cones}
Let $d,k$ and $\gamma$ be nonnegative integers and let $\B_{d,\gamma}$ denote the nonnegative span of all polynomials of the form 
\begin{equation}
\label{gene}
x^i(x+1)^{d-i}+x^j(x+1)^{d-j},
\end{equation}
where $i+j \geq \gamma$.  
Then 
\[
\phi_k(\B_{d,\gamma}) \subseteq \B_{d,\gamma}.
\]
\end{lemma}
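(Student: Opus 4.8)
The plan is to reduce the statement to a closure property of the cone $\B_{d,\gamma}$ under the operators $\phi_k$, and to prove this by induction on $k$ using the recursion~\eqref{eqn: recursion}. Since $\phi_k$ is linear and $\B_{d,\gamma}$ is a nonnegative span, it suffices to show that $\phi_k$ sends each generator $g_{i,j} := x^i(x+1)^{d-i} + x^j(x+1)^{d-j}$ with $i+j \geq \gamma$ into $\B_{d,\gamma}$. The key observation is that $\phi_0$ is the identity operator (since $T_0(x^m) = 0!\,S(m+1,1)x^m = x^m$), so the base case $k=0$ is immediate. For the inductive step I would like to express $\phi_k(g_{i,j})$ in terms of $\phi_{k-1}$ and $\phi_k$ applied to lower-degree generators, and this is exactly what~\eqref{eqn: recursion} delivers once we peel off a factor of $x$.

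First I would handle the case where both $i \geq 1$ and $j \geq 1$. Writing $x^i(x+1)^{d-i} = x\cdot x^{i-1}(x+1)^{(d-1)-(i-1)}$ and similarly for the other term, we have $g_{i,j} = x\cdot g'_{i-1,j-1}$ where $g'_{i-1,j-1} = x^{i-1}(x+1)^{(d-1)-(i-1)} + x^{j-1}(x+1)^{(d-1)-(j-1)}$ is a generator of $\B_{d-1,\gamma-2}$ (note $(i-1)+(j-1) = i+j-2 \geq \gamma - 2$). Applying~\eqref{eqn: recursion},
\[
\phi_k(g_{i,j}) = \phi_k(x\cdot g'_{i-1,j-1}) = (k+1)x\,\phi_k(g'_{i-1,j-1}) + x\,\phi_{k-1}(g'_{i-1,j-1}).
\]
By the inductive hypothesis on $k$ (for the second term) and an inner induction on degree $d$ (for the first term, which keeps $k$ fixed but lowers the degree), both $\phi_k(g'_{i-1,j-1})$ and $\phi_{k-1}(g'_{i-1,j-1})$ lie in $\B_{d-1,\gamma-2}$; multiplying by $x$ raises each exponent pair by $(1,1)$ and the ambient degree by $1$, so $x\cdot\B_{d-1,\gamma-2} \subseteq \B_{d,\gamma}$, and we are done in this case. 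The remaining cases are when $i = 0$ or $j = 0$; by symmetry say $i = 0$, so the generator is $(x+1)^d + x^j(x+1)^{d-j}$ with $j \geq \gamma$. Here one cannot factor out $x$ from the first summand, so a separate argument is needed: I would expand $(x+1)^d = \sum_{\ell}\binom{d}{\ell}x^\ell(x+1)^{d-\ell}$ and pair off the binomial terms symmetrically (using $\binom{d}{\ell} = \binom{d}{d-\ell}$) so that $2(x+1)^d = \sum_\ell \binom{d}{\ell}\bigl(x^\ell(x+1)^{d-\ell} + x^{d-\ell}(x+1)^\ell\bigr)$, each summand a generator of $\B_{d,d} \subseteq \B_{d,\gamma}$ since $\ell + (d-\ell) = d \geq \gamma$; so $(x+1)^d \in \B_{d,\gamma}$ whenever $\gamma \leq d$, and then $g_{0,j} \in \B_{d,\gamma}$ follows if we also know $x^j(x+1)^{d-j} \in \B_{d,\gamma}$, which again reduces to the $i\geq1,j\geq1$ case (taking $i=j$, or more carefully handling a single term as half of $g_{j,j}$).

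The main obstacle I anticipate is bookkeeping around the boundary cases and the double induction (on $k$ and, within that, on degree $d$ for the $(k+1)x\phi_k(\cdot)$ term): one must be careful that the recursion~\eqref{eqn: recursion} requires $k \geq 1$, that lowering the exponent pair by $(1,1)$ can drop $\gamma$ below $0$ (harmless, since $\B_{d,\gamma} = \B_{d,0}$ for $\gamma \leq 0$ and that is the whole cone), and that the single-term polynomials $x^j(x+1)^{d-j}$ — which are not literally of the two-term generator form — still lie in $\B_{d,\gamma}$ when $j \leq \gamma \leq d$, which one gets by writing $2x^j(x+1)^{d-j}$ as the generator $g_{j,j}$. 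Once these edge cases are organized, the inductive machinery driven by~\eqref{eqn: recursion} closes the argument cleanly.
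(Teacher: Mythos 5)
Your handling of the case $i,j\geq 1$ matches the paper's: factor out $x$ so that $g_{i,j}=xh$ with $h$ a generator of $\B_{d-1,\gamma-2}$, apply the recursion~\eqref{eqn: recursion}, and induct on degree. That part is fine.

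The case $i=0$, however, has a genuine gap. You conclude only that $g_{0,j}\in\B_{d,\gamma}$ --- but that is vacuous, since $g_{0,j}$ is one of the defining generators of the cone. What you actually need is $\phi_k(g_{0,j})\in\B_{d,\gamma}$, and your argument never establishes this. Worse, the decomposition $2(x+1)^d=\sum_\ell\binom{d}{\ell}\bigl(x^\ell(x+1)^{d-\ell}+x^{d-\ell}(x+1)^\ell\bigr)$ cannot be used to reduce to the $i,j\geq 1$ case, because it still contains the term $\ell=0$ (i.e., $g_{0,d}$ itself), so applying $\phi_k$ to this decomposition just re-poses the problem: you cannot factor $x$ out of $(x+1)^d$, so~\eqref{eqn: recursion} gives you no foothold, and the induction on $k$ does not engage either.

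The paper handles $i=0$ with a separate idea that your proposal lacks. One computes $T_k(x^j(x+1)^{d-j})$ explicitly and checks that both $\phi_k\bigl((x+1)^d\bigr)=\sum_{\ell=0}^d a_\ell\, x^\ell(x+1)^{d-\ell}$ and $\phi_k\bigl(x^j(x+1)^{d-j}\bigr)=\sum_{\ell=j}^d b_\ell\, x^\ell(x+1)^{d-\ell}$ have nonnegative coefficients \emph{with the same total weight} $\sum_\ell a_\ell=\sum_\ell b_\ell=S(d+1,k+1)$, the second expansion moreover supported only on $\ell\geq j\geq\gamma$. Because the totals agree, one can pair each unit of the first expansion (possibly at small $\ell$) bijectively with a unit of the second (always at $m\geq j$), writing $\phi_k(g_{0,j})$ as a nonnegative combination of generators $x^\ell(x+1)^{d-\ell}+x^m(x+1)^{d-m}$ with $\ell+m\geq j\geq\gamma$. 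Without some version of this weight-matching argument your proof cannot close the $i=0$ case.
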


\begin{proof}
The proof is by induction on $d$. 
Suppose the result is true for $d-1$. 
We need to prove that the image of each generator \eqref{gene}, where $i\leq j$, is in $\B_{d,\gamma}$. 

Suppose first that $i=0$. 
Then $j \geq \gamma$, and we may write 
\begin{equation}
\label{eqn: bijective}
\begin{split}
\phi_k((x+1)^d) &= \sum_{\ell=0}^d a_\ell x^\ell(x+1)^{d-\ell}, \ \ \ \mbox{ and }\\
\phi_k(x^j(x+1)^{d-j}) &= \sum_{\ell=j}^d b_\ell x^\ell(x+1)^{d-\ell},
\end{split}
\end{equation}
where $\sum_{\ell=0}^d a_\ell =\sum_{\ell=j}^d b_\ell = S(d+1,k+1)$, and all $a_\ell$ and $b_\ell$ are nonnegative integers. 
To see this, consider the following for $0\leq j\leq d$:
\begin{align*}
T_k(x^j(x+1)^{d-j})
&= \sum_{i=0}^k(-1)^{k-i}{k\choose i}(i+1)^jx^j((i+1)x+1)^{d-j},	\\
&= \sum_{i=0}^k(-1)^{k-i}{k\choose i}(i+1)^jx^j((ix)+(x+1))^{d-j},	\\
&= \sum_{i=0}^k(-1)^{k-i}{k\choose i}(i+1)^j\sum_{\ell = 0}^{d-j}{d-j\choose \ell}i^\ell x^{j+\ell}(x+1)^{d-(j+\ell)},	\\
&= \sum_{\ell=0}^{d-j}{d-j\choose \ell}\left(\sum_{i=0}^k(-1)^{k-i}{k\choose i}\sum_{r=0}^j{j\choose r}i^{r+\ell}\right) x^{j+\ell}(x+1)^{d-(j+\ell)},	\\
&= \sum_{\ell=0}^{d-j}{d-j\choose \ell}\sum_{r=0}^j{j\choose r}\left(\sum_{i=0}^k(-1)^{k-i}{k\choose i}i^{r+\ell}\right) x^{j+\ell}(x+1)^{d-(j+\ell)},	\\
&= k!\sum_{\ell=0}^{d-j}{d-j\choose \ell}\left(\sum_{r=0}^j{j\choose r}S(r+\ell,k)\right) x^{j+\ell}(x+1)^{d-(j+\ell)}.
\end{align*}
It follows from the last line that the coefficients $a_\ell$ and $b_\ell$ in equations~\eqref{eqn: bijective} are nonnegative integers.  
To see that the coefficients also have the desired sum simply notice that
\[
\sum_{\ell=0}^{d-j}\sum_{r=0}^j{d-j\choose\ell}{j\choose r}S(r+\ell,k) = \sum_{m=0}^d{d\choose m}S(m,k) = S(d+1,k+1).
\]
It follows that we may match each term in the first sum of equation~\eqref{eqn: bijective} to a term in the second sum bijectively and write 
$
\phi_k((x+1)^d+x^j(x+1)^{d-j})
$
as a sum of terms of the form $x^\ell(x+1)^{d-\ell}+ x^m(x+1)^{d-m}$ where $\ell \geq 0$ and $m \geq j \geq \gamma$. 
This proves the case when $i=0$. 

If $i >0$, let $g=x^i(x+1)^{d-i}+x^j(x+1)^{d-j} \in \B_{d,\gamma}$. 
Then $g=xh$, where $h \in \B_{d-1,\gamma-2}$.  
Hence, applying the recursion~\eqref{eqn: recursion} we get that
\[
\phi(g)=\phi_k(xh)= (k+1)x\phi_k(h) + x\phi_{k-1}(h).
\]
By induction $\phi_k(h), \phi_{k-1}(h) \in  \B_{d-1,\gamma-2}$, and thus 
$x\phi_k(h), x\phi_{k-1}(h) \in  \B_{d,\gamma}$.
\end{proof}

It follows from Lemma~\ref{lem: cones} that if $i(P;x)$ satisfies the assumptions of Theorem~\ref{thm: real-rooted I-decomposition}, including inequalities \eqref{eqn: hibi}, then so does $i^\varphi(P;x)$ for any combinatorially-positive valuation.  
This is the key to proving $h^\varphi(P;x)$ is alternatingly increasing for all lattice zonotopes with interior lattice points.  

\subsection{The alternatingly increasing property for lattice zonotopes}
\label{subsec: alternatingly increasing property for lattice zonotopes}
Given a point $u\in\R^n$, let ${\bf u}$ denote the line segment in $\R^n$ given by the convex hull of $u$ and the origin.  
A \emph{lattice zonotope} is a lattice polytope that is the Minkowski sum of line segments $\bf{ u^{(1)}},\ldots,\bf{ u^{(m)}}$, denoted $\Zone(u^{(1)},\ldots,u^{(m)})$.  
If the points $u^{(1)},\ldots,u^{(m)}$ are understood from context, or irrelevant, we may write simply $\Zone$.  
In \cite{D15}, the author showed that Ehrhart polynomial of lattice zonotope $\Zone(u^{(1)},\ldots,u^{(m)})$ is closely related to the $h^\ast$-polynomial of a second polytope associated to the vectors $u^{(1)},\ldots,u^{(m)}$. 
Let $U$ denote the $n\times m$ matrix whose columns are $u^{(1)},\ldots,u^{(m)}$.  
The \emph{Lawrence matrix} of $u^{(1)},\ldots,u^{(m)}$ is the block matrix
\[
\begin{pmatrix}
U	&	\emph{0}_{n\times m}	\\
I_m	&	I_m				\\
\end{pmatrix},
\]
where $\emph{0}_{n\times m}$ denotes the $n\times m$ matrix of all zeros and $I_m$ denotes the $m\times m$ identity matrix.
The \emph{Lawrence polytope} associated to $u^{(1)},\ldots,u^{(m)}$ is the convex hull of all columns of the Lawrence matrix of $U$.  
It is denoted by $\Lambda(u^{(1)},\ldots,u^{(m)})$, or simply $\Lambda$ if $u^{(1)},\ldots,u^{(m)}$ are understood from context or irrelevant.

\begin{theorem}
\cite[Theorem 3.7]{D15}
\label{thm: aaron's theorem}
Let $u^{(1)},\ldots,u^{(m)}\subset\Z^d$ be a finite set of vectors defining a $d$-dimensional lattice zonotope $\Zone = \Zone(u^{(1)},\ldots,u^{(m)})$.  
Then the Ehrhart polynomial $\Zone$ is related to the $h^\ast$-polynomial of the associated Lawrence polytope $\Lambda$ via
\[
i\left(\Zone;x\right) = \sum_{i=0}^dh^\ast_i\left(\Lambda\right)x^i(x+1)^{d-i}.
\]
\end{theorem}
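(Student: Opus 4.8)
The plan is to evaluate each side of the identity through Stanley's lattice-point count for zonotopes, obtaining a common expression indexed by the linearly independent subsets of $\{u^{(1)},\dots,u^{(m)}\}$. Throughout, for such a subset $S$ let $m(S)$ denote the greatest common divisor of the $|S|\times|S|$ minors of the matrix with columns $\{u^{(i)}\}_{i\in S}$ (equivalently, the relative volume of the parallelepiped they span), with $m(\emptyset)=1$; and for a composition $\nu$ write $|\nu|=\sum_i\nu_i$.

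\emph{The left side.} A classical formula of Stanley states that $|\Z^d\cap\Zone(v^{(1)},\dots,v^{(k)})|=\sum_T m(T)$, the sum over subsets $T$ for which $\{v^{(j)}\}_{j\in T}$ is linearly independent. Applying this to the dilate $n\Zone=\Zone(nu^{(1)},\dots,nu^{(m)})$, and noting that scaling a column by $n$ scales each maximal minor by $n$, one gets $i(\Zone;n)=\sum_S m(S)\,n^{|S|}$, i.e.
\[
i(\Zone;x)=\sum_S m(S)\,x^{|S|},
\]
the sum over linearly independent subsets $S\subseteq\{u^{(1)},\dots,u^{(m)}\}$.

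\emph{The right side.} Expanding a generic point of $\Lambda$ as a convex combination of the columns of the Lawrence matrix, I would first verify the description
\[
\Lambda=\Big\{(w,\nu)\in\R^d\times\R^m:\ \nu\ge0,\ \textstyle\sum_i\nu_i=1,\ w\in\Zone(\nu_1u^{(1)},\dots,\nu_mu^{(m)})\Big\},
\]
so that $\Lambda$ lies in $\{\sum_i\nu_i=1\}$ and, since the $u^{(i)}$ span $\R^d$, has dimension $d+m-1$. Dilating by $n$, the lattice points of $n\Lambda$ are exactly the pairs $(w,\nu)$ with $\nu\in\Z^m_{\ge0}$, $|\nu|=n$, and $w\in\Z^d\cap\Zone(\nu_1u^{(1)},\dots,\nu_mu^{(m)})$. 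Applying Stanley's formula to each such rescaled zonotope and using $m(\{\nu_iu^{(i)}\}_{i\in S})=\big(\prod_{i\in S}\nu_i\big)m(S)$ (terms with $\nu_i=0$ for some $i\in S$ vanishing, consistently) yields
\[
i(\Lambda;n)=\sum_{\nu\in\Z^m_{\ge0},\ |\nu|=n}\ \sum_S m(S)\prod_{i\in S}\nu_i.
\]
Summing $x^n$ over $n$ and using $\sum_{k\ge0}kx^k=x/(1-x)^2$ collapses the inner sums to $\sum_S m(S)\,x^{|S|}(1-x)^{-(m+|S|)}$; multiplying through by $(1-x)^{d+m}=(1-x)^{\dim\Lambda+1}$ then gives
\[
h^\ast(\Lambda;x)=\sum_S m(S)\,x^{|S|}(1-x)^{d-|S|}.
\]
In particular $\deg h^\ast(\Lambda)\le d$, so the sum in the theorem runs over all coefficients of $h^\ast(\Lambda)$, and $\sum_{i=0}^d h^\ast_i(\Lambda)x^i(x+1)^{d-i}$ is precisely the $f$-polynomial $(x+1)^d h^\ast(\Lambda;x/(x+1))$ of $h^\ast(\Lambda)$ in the sense of \eqref{eqn: f-polynomial}.

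\emph{Matching.} Substituting $x/(x+1)$ for $x$ in the displayed formula for $h^\ast(\Lambda)$ and using $1-\tfrac{x}{x+1}=\tfrac{1}{x+1}$, the prefactor $(x+1)^d$ cancels each summand, leaving $(x+1)^d h^\ast(\Lambda;x/(x+1))=\sum_S m(S)\,x^{|S|}=i(\Zone;x)$, which is the claim. I expect the only genuine obstacle to be the fibering step: recognizing the lattice points of $n\Lambda$ as a composition $\nu$ of $n$ paired with a lattice point of the rescaled zonotope $\Zone(\nu_1u^{(1)},\dots,\nu_mu^{(m)})$. Once that is in place the rest is Stanley's formula plus a routine generating-function manipulation; the remaining care points are the vanishing terms in Stanley's formula when some $\nu_i=0$ and the verification that $\dim\Lambda=d+m-1$, which fixes the correct power of $(1-x)$ in the passage to $h^\ast(\Lambda)$.
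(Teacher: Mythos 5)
Your argument is correct, but it takes a genuinely different route from what the paper relies on. The paper does not prove this theorem itself: it cites \cite[Theorem 3.7]{D15}, whose proof passes through \cite[Theorem 1.6]{S09b} (a regular unimodular triangulation of $\Lambda$ together with a matroid-theoretic interpretation of its $h$-polynomial), and the paper devotes a remark to arguing that the hypothesis in \cite{D15} that the $u^{(i)}$ span $\Z^d$ --- needed only for the hypertoric cohomology interpretation in \cite{S09b} --- can be dropped. Your proof sidesteps all of that. Using Stanley's lattice-point count for zonotopes you obtain $i(\Zone;x)=\sum_S m(S)x^{|S|}$ directly; then, via the fibering of $\Lambda$ over the simplex $\{\nu\ge 0,\ \sum_i\nu_i=1\}$ with fibers the rescaled zonotopes $\Zone(\nu_1u^{(1)},\dots,\nu_mu^{(m)})$, you compute the Ehrhart series of $\Lambda$ and hence the closed formula $h^*(\Lambda;x)=\sum_S m(S)x^{|S|}(1-x)^{d-|S|}$; the theorem then follows from the one-line $f$-polynomial substitution $x\mapsto x/(x+1)$. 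This is more elementary and self-contained, yields a clean explicit formula for $h^*(\Lambda)$ as a by-product, and works under exactly the hypothesis in the restated theorem (the $u^{(i)}$ span $\R^d$, not necessarily $\Z^d$), which is precisely the extra generality the paper wanted from \cite{D15}. The delicate points --- the fibering identification of the lattice points of $n\Lambda$, the harmless vanishing of terms with some $\nu_i=0$ through the factor $\prod_{i\in S}\nu_i$, and $\dim\Lambda=d+m-1$ --- are all handled correctly, so I see no gaps.
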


We note that in the original statement of Theorem~\ref{thm: aaron's theorem} in \cite{D15}, the author further includes the hypothesis that $u^{(1)},\ldots,u^{(m)}$ span $\Z^d$ so that they can apply \cite[Theorem 1.6]{S09b} in their proof.  
However, the combinatorial proof of \cite[Theorem 1.6]{S09b} does not actually require this hypothesis. It only requires the combinatorial interpretation of a regular triangulation of a Lawrence polytope $\Lambda(u^{(1)},\ldots,u^{(m)})$ in terms of an associated matroid.  
The additional hypothesis of spanning is necessary in \cite{S09b} to ensure that the coefficients of the $h^\ast$-polynomial of the Lawrence polytope admit an interpretation in terms of cohomology groups of hypertoric varieties.  
Since we do not need this connection, we are free to use the more general form of the result stated in Theorem~\ref{thm: aaron's theorem}. 
Thus, we can derive the following theorem.

\begin{theorem}
\label{thm: zonotopes}
Let $\Zone$ be a $d$-dimensional lattice zonotope that has a lattice-point in its relative interior and $\varphi$ a combinatorially-positive valuation.
Then the $\I$--decomposition of $h^\varphi\left(\Zone;x\right)$ is real-rooted.
\end{theorem}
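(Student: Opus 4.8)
The plan is to reduce Theorem~\ref{thm: zonotopes} to Theorem~\ref{thm: real-rooted I-decomposition} by feeding the Ehrhart polynomial of the zonotope through the combinatorially-positive valuation operator and checking that the key hypotheses survive. First I would recall from Theorem~\ref{thm: aaron's theorem} that the Ehrhart polynomial of the $d$-dimensional lattice zonotope $\Zone$ can be written as $i(\Zone;x) = \sum_{i=0}^d h^\ast_i(\Lambda) x^i(x+1)^{d-i}$, where $\Lambda$ is the associated Lawrence polytope; since $h^\ast$-coefficients are nonnegative integers \cite{S80}, this already has the shape required in the hypothesis of Theorem~\ref{thm: real-rooted I-decomposition} with $c_j = h^\ast_j(\Lambda) \geq 0$.

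Next I would verify that the coefficient sequence $c_0,\ldots,c_d$ satisfies the inequalities~\eqref{eqn: hibi}, namely $c_0+\cdots+c_j \leq c_d+\cdots+c_{d-j}$ for all $0\leq j\leq d/2$. As noted in Remark~\ref{rem: on applications}, these partial-sum inequalities are precisely those established by Stanley \cite{S91} for the $h^\ast$-vector of a lattice polytope whose associated semigroup algebra is a Cohen-Macaulay domain; Lawrence polytopes are well known to have such $h^\ast$-vectors, and in any case the inequalities $c_j \leq c_{d-j}$ alone — which hold for pure $O$-sequences and hence for these $h^\ast$-vectors — already imply~\eqref{eqn: hibi}. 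I should also double-check the dimension bookkeeping: the Lawrence polytope lives in a higher-dimensional space, but its dimension equals that of $\Zone$ (since $i(\Zone;x)$ has degree $d$ and the expansion in Theorem~\ref{thm: aaron's theorem} is in degree $d$), so there is no index mismatch. For a zonotope with an interior lattice point, Theorem~\ref{thm: betke mcmullen} guarantees the $\I_d$-decomposition of $h^\varphi(\Zone;x)$ has nonnegative coefficients, which is the ambient setting in which real-rootedness is being asserted.

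Then I would pass to a general combinatorially-positive valuation $\varphi$. By Theorem~\ref{thm: valuation characterization} we have $\varphi = \sum_{k=0}^d \alpha_k f^\ast_k$ with $\alpha_k \geq 0$, so by \eqref{eqn: combinatorially-positive} the associated polynomial is $i^\varphi(\Zone;x) = \sum_{k=0}^d \alpha_k T_k(i(\Zone;x))$. The crucial point is Lemma~\ref{lem: cones}: since $i(\Zone;x) \in \B_{d,d}$ (by the previous paragraph it lies in the nonnegative span of $x^i(x+1)^{d-i}+x^j(x+1)^{d-j}$ with $i+j\geq d$, after pairing off terms using~\eqref{eqn: hibi}), and each $\phi_k = T_k/k!$ maps $\B_{d,d}$ into $\B_{d,d}$, we conclude $i^\varphi(\Zone;x) \in \B_{d,d}$ as well. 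Hence $i^\varphi(\Zone;x) = \sum_j c'_j x^j(x+1)^{d-j}$ with $c'_j \geq 0$ satisfying~\eqref{eqn: hibi}. Now $i^\varphi(\Zone;x)$ is exactly the polynomial $i$ appearing in Theorem~\ref{thm: real-rooted I-decomposition} (with $h = h^\varphi(\Zone;x)$), so that theorem immediately yields the real-rootedness of the $\I$-decomposition of $h^\varphi(\Zone;x)$.

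The main obstacle I anticipate is the translation between the partial-sum inequalities~\eqref{eqn: hibi} on the $h^\ast$-vector of $\Lambda$ and membership of $i(\Zone;x)$ in the cone $\B_{d,d}$: one needs the elementary but slightly fiddly observation that a nonnegative sequence $(c_j)$ satisfying $\sum_{j\le t} c_j \leq \sum_{j\le t} c_{d-j}$ for all $t \leq d/2$ lets you write $\sum_j c_j x^j(x+1)^{d-j}$ as a nonnegative combination of the generators $x^i(x+1)^{d-i}+x^j(x+1)^{d-j}$ with $i+j\geq d$ (greedily matching the smallest unmatched index on the left with the largest available on the right). This is the same bookkeeping already used inside the proof of Theorem~\ref{thm: real-rooted I-decomposition}, so I would simply invoke it. A secondary point to get right is confirming that the known Cohen-Macaulay/$O$-sequence properties genuinely apply to Lawrence polytopes; citing \cite{S91,H89} as in Remark~\ref{rem: on applications} should suffice.
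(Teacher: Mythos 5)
Your overall strategy is the paper's: write $i(\Zone;x)=\sum_j h^\ast_j(\Lambda)x^j(x+1)^{d-j}$ via Theorem~\ref{thm: aaron's theorem}, verify the inequalities~\eqref{eqn: hibi}, push through $T_k$ using Lemma~\ref{lem: cones}, and finish with Theorem~\ref{thm: real-rooted I-decomposition}. The gap is in the step you treat as routine, namely the verification of~\eqref{eqn: hibi} for the coefficients $c_j=h^\ast_j(\Lambda)$. Your ``dimension bookkeeping'' claim is false: the Lawrence polytope $\Lambda$ has dimension $d+m-1$, not $d$; the degree-$d$ expansion in Theorem~\ref{thm: aaron's theorem} only reflects that $\deg i(\Zone;x)=d$. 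Consequently the inequalities of Stanley \cite{S91} (equivalently Stapledon \cite{S09}) for the $h^\ast$-vector of $\Lambda$ read $c_0+\cdots+c_j\leq c_s+c_{s-1}+\cdots+c_{s-j}$, where $s$ is the \emph{degree} of $h^\ast(\Lambda;x)$, and they only become~\eqref{eqn: hibi} with the index $d$ after one proves $s=d$. Your fallback claim that $c_j\leq c_{d-j}$ holds because these are pure $O$-sequences is also wrong: Ehrhart $h^\ast$-vectors are not pure $O$-sequences in general, and $c_j\leq c_{d-j}$ already fails whenever $\deg h^\ast(\Lambda;x)<d$ (then $c_0=1>0=c_d$). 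So as written, the inequalities you feed into Theorem~\ref{thm: real-rooted I-decomposition} are not justified.

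This is exactly where the hypothesis that $\Zone$ has an interior lattice point must enter, and in your proposal it never does any real work: you invoke it only through Theorem~\ref{thm: betke mcmullen}, which concerns the counting valuation and is neither a hypothesis of nor an ingredient in Theorem~\ref{thm: real-rooted I-decomposition}. The paper closes the gap as follows: the explicit formula for $h^\ast_j(\Lambda)$ in the proof of Theorem~\ref{thm: aaron's theorem} in \cite{D15} gives $h^\ast_j(\Lambda)=0$ for $j>d$, and Ehrhart--MacDonald reciprocity \cite[Theorem 4.1]{BR07} gives $h^\ast_d(\Lambda)=(-1)^d i(\Zone;-1)=i(\Zone^\circ;1)>0$ precisely because $\Zone$ contains an interior lattice point; hence $\deg h^\ast(\Lambda;x)=d$ and \cite[Theorem 2.19]{S09} yields~\eqref{eqn: hibi} with the correct index $d$. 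Once that is in place, your remaining steps (pairing terms to see $i(\Zone;x)\in\B_{d,d}$, applying Lemma~\ref{lem: cones} to get $i^\varphi(\Zone;x)\in\B_{d,d}$, and then Theorem~\ref{thm: real-rooted I-decomposition}) coincide with the paper's proof and go through.
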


\begin{proof}
Let us first consider the combinatorially-positive valuation corresponding to the Ehrhart polynomial $i(P;x)$.  
Assume $\Zone  = \Zone(u^{(1)},\ldots,u^{(m)})$ for some vectors $u^{(1)},\ldots,u^{(m)}\in\Z^d$.  
By Theorem~\ref{thm: real-rooted I-decomposition} and Theorem~\ref{thm: aaron's theorem}, it suffices to prove that 
\[
h_0^\ast(\Lambda)+\cdots+h_i^\ast(\Lambda) \leq h_d^\ast(\Lambda)+\cdots+h_{d-i}^\ast(\Lambda),
\]
for all $0\leq i\leq d$.  
We note that, in this case, the dimension of $\Lambda$ is well-known to be $d+m-1$, and $h^\ast(\Lambda;x)$ has degree at most $d+m-1$.  
Thus, by \cite[Theorem 2.19]{S09}, it suffices to prove that the degree of $h^\ast(\Lambda;x)$ is actually $d$.  
First, we note that by the formula given for the coefficients $h^\ast_j(\Lambda)$ the proof of Theorem~\ref{thm: aaron's theorem} in \cite{D15}, it is clear that $h^\ast_j(\Lambda) = 0$ whenever $j>d$.  
Thus, it remains to check that $h^\ast_d(\Lambda) \neq 0$.  
This can be seen as follows:

By Ehrhart-MacDonald reciprocity \cite[Theorem 4.1]{BR07} we know that
\[
i(\Zone;-x) = (-1)^{d}i(\Zone^\circ;x),
\]
where $\Zone^\circ$ denotes the interior of the lattice zonotope $\Zone$.  
Thus, it follows that
\[
i(\Zone^\circ;1) = (-1)^{d}i(\Zone;-1) = \sum_{k=0}^dh_k^\ast(\Lambda)(-1)^{d+i}(-1+1)^{d-i} = h_d^\ast(\Lambda).
\]
Therefore, $h_d^\ast(\Lambda)\neq0$ since $\Zone$ contains a lattice point in its relative interior.  
Thus, we conclude that $h^\ast(\Lambda;x)$ has degree $d$.    
By Lemma~\ref{lem: cones} it follows that if $\varphi$ is a combinatorially-positive valuation then 
\[
i^\varphi(Z;x) = \sum_{k=0}^dc_k^\varphi x^k(x+1)^{d-k},
\]
where the coefficients $c_0^\varphi,\ldots,c_d^\varphi$ are nonnegative integers satisfying the inequalities (2) of Theorem~\ref{thm: real-rooted I-decomposition}.  
An application of Theorem~\ref{thm: real-rooted I-decomposition} thereby completes the proof.
\end{proof}

As a special case of Theorem~\ref{thm: zonotopes}, we have the following in support of Conjecture~\ref{conj: idp}.
\begin{corollary}
\label{cor: on idp conjecture}
Let $\Zone$ be a lattice zonotope with a lattice point in its relative interior. 
Then $h^\ast\left(\Zone;x\right)$ is alternatingly increasing.
\end{corollary}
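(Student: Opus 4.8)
The plan is to derive Corollary~\ref{cor: on idp conjecture} directly from Theorem~\ref{thm: zonotopes} together with the structural facts already recorded about symmetric decompositions. Concretely, I would take $\varphi$ to be the valuation $\varphi(P)=|P\cap\Z^n|$, so that $h^\varphi(\Zone;x)=h^\ast(\Zone;x)$ is the ordinary Ehrhart $h^\ast$-polynomial; this is a combinatorially-positive (indeed lattice-invariant) valuation, since its $h^\ast$-polynomial always has nonnegative coefficients by Stanley's nonnegativity theorem \cite{S80}. Applying Theorem~\ref{thm: zonotopes} to this $\varphi$, we conclude that the $\I_d$-decomposition $(a,b)$ of $h^\ast(\Zone;x)$ is real-rooted.

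Next I would invoke the two general facts about symmetric decompositions stated in the introduction and in Section~\ref{sec: the real-rootedness properties of symmetric decompositions}. First, a real-rooted polynomial with only nonnegative coefficients is unimodal (\cite[Theorem 1.2.1]{B89}, as quoted in the text). Second, by Theorem~\ref{thm: betke mcmullen} (Betke--McMullen), since $\Zone$ contains a lattice point in its relative interior, both $a$ and $b$ have only nonnegative integer coefficients. Combining these, $a$ and $b$ are each unimodal with nonnegative coefficients. Finally, I would apply the characterization recalled in the introduction: a generating polynomial $p$ is alternatingly increasing if and only if the polynomials $a$ and $b$ in its symmetric decomposition are both unimodal with nonnegative coefficients. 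Hence $h^\ast(\Zone;x)$ is alternatingly increasing.

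The proof is therefore essentially a matter of assembling the right citations in the correct order; the only thing to be careful about is making sure the hypotheses of each ingredient are met. The ``interior lattice point'' hypothesis is exactly what is needed both for Theorem~\ref{thm: betke mcmullen} (to get nonnegativity of $a$ and $b$) and, as used inside the proof of Theorem~\ref{thm: zonotopes}, to guarantee that $h^\ast_d(\Lambda)\neq0$ so that the degree and hence the inequalities \eqref{eqn: hibi} are valid. So I would phrase the corollary's proof as: ``By Theorem~\ref{thm: zonotopes} applied to the valuation $\varphi(P)=|P\cap\Z^n|$, the $\I$-decomposition $(a,b)$ of $h^\ast(\Zone;x)$ is real-rooted, and by Theorem~\ref{thm: betke mcmullen} both $a$ and $b$ have nonnegative coefficients; thus $a$ and $b$ are unimodal with nonnegative coefficients, which is equivalent to $h^\ast(\Zone;x)$ being alternatingly increasing.''

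The main (and essentially only) obstacle is conceptual rather than technical: one must notice that ``real-rooted $\I$-decomposition'' plus ``nonnegative $\I$-decomposition'' immediately yields the alternatingly increasing property, so that no new work beyond Theorem~\ref{thm: zonotopes} is required. There is nothing to grind through here — the heavy lifting was done in establishing Theorem~\ref{thm: zonotopes} via Lemma~\ref{lem: cones} and Theorem~\ref{thm: real-rooted I-decomposition}. One small point worth a sentence is to confirm that the hypothesis in Theorem~\ref{thm: zonotopes} (a lattice point in the \emph{relative} interior) matches the hypothesis of the conjecture, which it does.
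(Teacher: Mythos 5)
Your proposal is correct and follows essentially the same route as the paper, which presents this corollary as an immediate special case of Theorem~\ref{thm: zonotopes} applied to the lattice-point-counting valuation. You have merely made explicit the step the paper leaves implicit, namely that real-rootedness of the $\I$-decomposition (Theorem~\ref{thm: zonotopes}) combined with nonnegativity of $a$ and $b$ (Theorem~\ref{thm: betke mcmullen}) yields unimodal, nonnegative $a$ and $b$ and hence the alternatingly increasing property.
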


\subsection{Interlacing properties for centrally symmetric lattice zonotopes}
\label{subsec: interlacing properties for centrally symmetric lattice zonotopes}
It is natural to ask if the $h^\ast$-polynomials of lattice zonotopes further possess the interlacing properties of Theorem~\ref{thm: interlacing TFAE}, similar to the simplices for numeral systems studied in \cite{S17}.  
It turns out that the answer is ``yes'' when the lattice zonotope is centrally symmetric.
In \cite{BJM16} the authors proved that the Ehrhart $h^\ast$-polynomial of a centrally symmetric zonotope is always alternatingly increasing.  
These results are generalized by Theorem~\ref{thm: zonotopes}.  
In this subsection we further strengthen and generalize the results of \cite{BJM16} on centrally symmetric zonotopes by proving that the $h^\ast$-polynomial of any centrally symmetric zonotope with respect to any combinatorially-positive valuation satisfies the interlacing conditions of Theorem~\ref{thm: interlacing TFAE}.  

A $d$-dimensional lattice zonotope $Z$ is called a \emph{centrally symmetric zonotope (or Type-B zonotope)} if it can be realized as the projection of the cube $[-1,1]^m$ for some $m\geq d$.  
The name ``Type-B zonotope'' arises since the Ehrhart $h^\ast$-polynomial of such a zonotope is computable in terms of descent statistics for Type-B permutations \cite{BJM16}.  
A \emph{signed permutation} on $[d]$ is a pair $(\pi,\varepsilon)$ such that $\pi\in\mathfrak{S}_{d}$ and $\varepsilon\in\{-1,1\}^d$.  
The collection of all such pairs is denoted $\B_d$.  
Given $(\pi,\varepsilon)\in \B_d$, for each letter $\pi_i$ in the permutation $\pi$ we assign the sign $\varepsilon_i$.  
Set $\pi_0:=0$ and $\varepsilon_0:=1$.
The \emph{descent set} of $(\pi,\varepsilon)\in\B_d$ is the collection
\[
\Des(\pi,\varepsilon) :=
\{i\in[d-1]\cup\{0\}  
\, : \,
\varepsilon_i\pi_i>\varepsilon_{i+1}\pi_{i+1}
\}.
\]
An element of $\Des(\pi,\varepsilon)$ is called a \emph{descent} of $(\pi,\varepsilon)$, and the \emph{descent number} of $(\pi,\varepsilon)$ is $\des(\pi,\varepsilon):=\left|\Des(\pi,\varepsilon)\right|$. 
The \emph{Type-B Eulerian polynomial} is defined to be 
\[
B_0^d(x) :=\sum_{(\pi,\varepsilon)\in \B_d}x^{\des(\pi,\varepsilon)},
\]
and it is known to be a real-rooted polynomial \cite{B94}.  
For $1\leq k \leq d$ define the polynomials
\[
B_k^d(x) :=\sum_{(\pi,\varepsilon)\in B_d}\chi\left(\varepsilon_d = 1,\pi_d = d+1-k\right)x^{\des(\pi,\varepsilon)},
\]
where $\chi(\varphi)$ equals $1$ if $\varphi$ is a true statement and $0$ otherwise.  
Note in particular that $B_1^{d+1} = B_0^d$.
For $0\leq k\leq d$, we define the \emph{half-open $(\pm1)$-cube} to be 
\[
[-1,1]_k^d := [-1,1]^d\backslash\{x_d = 1,x_{d-1} = 1,\ldots,x_{d+1-k} = 1\}.
\]
Thus, we have that $[-1,1]^d_0 = [-1,1]^d$.
We will use the following fact whose proof can be found in \cite{BJM16}.  
\begin{theorem}
\cite[Theorem 5.1]{BJM16}
\label{thm: cubes}
For $0\leq k\leq d$,
\[
h^\ast\left([-1,1]_k^d;x\right) = B_{k+1}^{d+1}(x).
\]
\end{theorem}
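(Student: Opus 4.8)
The plan is to reduce the identity to an Ehrhart computation and then match the resulting generating function with $B_{k+1}^{d+1}(x)/(1-x)^{d+1}$ by induction. First I would recall that a half-open polytope has a polynomial lattice-point counting function, so the $h^\ast$-polynomial of $[-1,1]_k^d$, defined by $\sum_{m\ge 0}i\bigl([-1,1]_k^d;m\bigr)x^m = h^\ast\bigl([-1,1]_k^d;x\bigr)/(1-x)^{d+1}$, is well-defined and of degree at most $d$. A lattice point lies in the $m$-th dilate $m[-1,1]_k^d$ precisely when each of its first $d-k$ coordinates lies in $\{-m,\dots,m\}$ and each of its last $k$ coordinates lies in $\{-m,\dots,m-1\}$, so
\[
i\bigl([-1,1]_k^d;m\bigr) = (2m+1)^{d-k}(2m)^{k}.
\]
Hence the theorem is equivalent to the identity $\sum_{m\ge 0}(2m+1)^{d-k}(2m)^{k}x^m = B_{k+1}^{d+1}(x)/(1-x)^{d+1}$, which I would prove by induction on $(d,k)$.

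The base case $k=0$ is the classical type-$B$ Worpitzky identity $\sum_{m\ge 0}(2m+1)^n x^m = B_0^n(x)/(1-x)^{n+1}$ together with $B_1^{n+1}=B_0^n$, which gives $h^\ast\bigl([-1,1]^d;x\bigr)=B_1^{d+1}(x)$. For the inductive step I would use the half-open facet-removal identity: if $F$ is a facet of a $d$-dimensional (half-open) lattice polytope $P$, then $i(P;m)=i(P\setminus F;m)+i(F;m)$ for all $m$, and clearing denominators yields
\[
h^\ast(P\setminus F;x)=h^\ast(P;x)-(1-x)\,h^\ast(F;x).
\]
Taking $P=[-1,1]_{k-1}^d$ and $F=\{x_{d+1-k}=1\}\cap P$, and noting that this facet is itself a half-open cube isomorphic to $[-1,1]_{k-1}^{d-1}$, we get
\[
h^\ast\bigl([-1,1]_k^d;x\bigr)=h^\ast\bigl([-1,1]_{k-1}^d;x\bigr)-(1-x)\,h^\ast\bigl([-1,1]_{k-1}^{d-1};x\bigr).
\]
By the inductive hypothesis the right-hand side is $B_k^{d+1}(x)-(1-x)B_k^d(x)$, so the entire theorem reduces to the purely combinatorial recursion
\[
B_{k+1}^{d+1}(x)=B_k^{d+1}(x)-(1-x)\,B_k^d(x).
\]

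To establish this recursion I would use the bijection on $\B_{d+1}$ that interchanges the two values $d+1-k$ and $d+2-k$ while keeping the sign attached to each position; it matches the signed permutations $(\pi,\varepsilon)$ with $\varepsilon_{d+1}=1,\ \pi_{d+1}=d+2-k$ to those with $\varepsilon_{d+1}=1,\ \pi_{d+1}=d+1-k$, and a short case check shows $\des$ is unchanged except on the subset where $\pi_d=d+1-k$ and $\varepsilon_d=1$, where it increases by exactly one. Summing $x^{\des}$ gives $B_k^{d+1}-B_{k+1}^{d+1}=(1-x)\sum' x^{\des}$, the sum being over signed permutations of $[d+1]$ whose last two entries are $d+1-k$ and $d+2-k$, each with sign $+1$; deleting the last entry and relabeling the remaining values in an order-preserving way identifies this last sum with $B_k^d(x)$. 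The main obstacle is exactly this last paragraph: one must carefully verify that the descent statistic behaves as claimed under the value-swap and under the deletion-plus-relabeling, and keep the index shifts consistent (the passage from $d+1$ to $d$, the conventions $\pi_0=0$, $\varepsilon_0=1$, and the convention $\pi_{d+1}=d+2-(k+1)$ built into the definition of $B_{k+1}^{d+1}$). An alternative that avoids the recursion is to prove the generating-function identity directly, via a half-open type-$B$ Worpitzky bijection encoding a lattice point of $m[-1,1]_k^d$ as a signed permutation of $[d+1]$ with $\varepsilon_{d+1}=1$ and $\pi_{d+1}=d+1-k$ together with a weakly increasing remainder vector, but this demands the same bookkeeping with $\des$.
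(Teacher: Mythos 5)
Your argument is correct; note that the paper itself offers no proof of this statement (it is quoted from \cite[Theorem 5.1]{BJM16} with the remark that the proof can be found there), so what you supply is a genuine, self-contained derivation. Each step checks out: the count $i([-1,1]_k^d;m)=(2m)^k(2m+1)^{d-k}$ agrees with the paper's equation~\eqref{eqn: type-B ehrhart}; the facet-removal identity $h^\ast(P\setminus F;x)=h^\ast(P;x)-(1-x)h^\ast(F;x)$ is the right bookkeeping since the facet is $(d-1)$-dimensional; and the removed facet $\{x_{d+1-k}=1\}\cap[-1,1]_{k-1}^d$ is lattice-equivalent to $[-1,1]_{k-1}^{d-1}$. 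The resulting recursion $B_{k+1}^{d+1}=B_k^{d+1}-(1-x)B_k^d$ is true, and your value-swap does prove it: exchanging the consecutive values $d+1-k$ and $d+2-k$ alters no comparison $\varepsilon_i\pi_i$ versus $\varepsilon_{i+1}\pi_{i+1}$ unless the two values occupy adjacent positions with equal signs (a signed value $\varepsilon_j\pi_j$ with $|\pi_j|\notin\{d+1-k,d+2-k\}$ cannot lie strictly between two consecutive integers of the same sign), which, given $\pi_{d+1}=d+2-k$ and $\varepsilon_{d+1}=1$, happens exactly when $\pi_d=d+1-k$ and $\varepsilon_d=1$; there the non-descent at position $d$ becomes a descent, so $\des$ increases by exactly one. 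Deleting the last letter and relabelling order-preservingly preserves every signed comparison (including those against $\pi_0=0$ and those with mixed signs), so the correction term is $B_k^d$; I verified the recursion numerically in small cases (e.g.\ $B_2^2=2x=B_1^2-(1-x)B_1^1$ and $B_2^3=6x+2x^2$). The only external input is the classical type-$B$ Worpitzky identity for the closed cube, which handles $k=0$ together with $B_1^{d+1}=B_0^d$ and is fair to cite. By contrast, the proof in \cite{BJM16} follows the direct route you mention only as an alternative: a Worpitzky-type decomposition of the dilated half-open cube indexed by signed permutations. Your facet-peeling induction replaces that global bijection by a single local descent check, at the modest cost of having to verify the purely combinatorial recursion for the refined Eulerian polynomials $B_k^d$.
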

Moreover, one can verify that 
\begin{equation}
\label{eqn: type-B ehrhart}
i\left([-1,1]^d_k;x\right) = \sum_{[k]\subseteq S\subseteq[d]}(2x)^{|S|} = (2x)^k(2x+1)^{d-k}.
\end{equation}
Notice that the final equality implies that $B_{k+1}^{d+1}(x) = A_{d,2}^k(x)$, a special case of the polynomials defined in equation~\eqref{eqn: partial eulerian}.
These observations, together with the following lemma on the map $\phi_k$, will allow us to prove the desired interlacing properties for centrally-symmetric lattice zonotopes.  
\begin{lemma}
\label{lem: Tk map for B polynomials}
Let $m,d,$ and $k$ be nonnegative integers, and let $\B_{d,m}$ denote the nonnegative span of all polynomials of the form
$
x^j(mx+1)^{d-j}
$
for $j = 0,1,\ldots,d$.  
Then 
\[
\phi_k(\B_{d,m}) \subseteq\B_{d,m}.
\]
\end{lemma}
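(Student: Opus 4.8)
The plan is to prove the inclusion one generator at a time. Since $\phi_k$ is linear and $\B_{d,m}$ is closed under nonnegative linear combinations, it suffices to show that $\phi_k\bigl(x^j(mx+1)^{d-j}\bigr)\in\B_{d,m}$ for each $0\le j\le d$. First dispose of the trivial case $k=0$: since $S(m+1,1)=1$ for all $m\ge 0$, Theorem~\ref{thm: tk operator} gives $T_0(x^m)=x^m$, so $\phi_0$ is the identity and there is nothing to prove. Hence I assume $k\ge 1$ from now on.

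For the main case I would carry out the same kind of computation as in the proof of Lemma~\ref{lem: cones}, but without the pairing bookkeeping, which shortens it considerably. Starting from the formula $T_k(f)(x)=\sum_{i=0}^k(-1)^{k-i}\binom{k}{i}f\bigl((i+1)x\bigr)$ derived in the proof of Theorem~\ref{thm: tk operator}, one writes $m(i+1)x+1=(mi)x+(mx+1)$ and applies the binomial theorem to obtain
\[
T_k\bigl(x^j(mx+1)^{d-j}\bigr)=\sum_{\ell=0}^{d-j}\binom{d-j}{\ell}m^\ell\Bigl(\sum_{i=0}^k(-1)^{k-i}\binom{k}{i}(i+1)^j i^\ell\Bigr)x^{j+\ell}(mx+1)^{d-(j+\ell)}.
\]
Each exponent $j+\ell$ runs over $\{0,1,\dots,d\}$, so every polynomial $x^{j+\ell}(mx+1)^{d-(j+\ell)}$ appearing here is one of the defining generators of $\B_{d,m}$; thus everything reduces to checking that the bracketed coefficient is nonnegative.

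This nonnegativity is the one step that is not pure bookkeeping, and I expect it to be the crux. Expanding $(i+1)^j=\sum_{r=0}^j\binom{j}{r}i^r$ and interchanging the order of summation turns the bracketed sum into $\sum_{r=0}^j\binom{j}{r}\sum_{i=0}^k(-1)^{k-i}\binom{k}{i}i^{r+\ell}$, and the standard identity $\sum_{i=0}^k(-1)^{k-i}\binom{k}{i}i^s=k!\,S(s,k)$ (see \cite[Chapter 1.9]{S11}, as used already in the proof of Theorem~\ref{thm: tk operator}) rewrites this as $k!\sum_{r=0}^j\binom{j}{r}S(r+\ell,k)\ge 0$. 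Dividing through by $k!$ exhibits $\phi_k\bigl(x^j(mx+1)^{d-j}\bigr)$ as a nonnegative combination of generators of $\B_{d,m}$, finishing the proof.

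As an alternative that stays closer in form to Lemma~\ref{lem: cones}, one can instead induct on $d$: the case $d=0$ is immediate, the case $j\ge 1$ follows from the recursion~\eqref{eqn: recursion} by writing $x^j(mx+1)^{d-j}=x\cdot h$ with $h=x^{j-1}(mx+1)^{(d-1)-(j-1)}\in\B_{d-1,m}$ and noting that $x\cdot\B_{d-1,m}\subseteq\B_{d,m}$ together with the inductive hypothesis applied to $\phi_k(h)$ and $\phi_{k-1}(h)$, while the single remaining case $j=0$ is handled by the same direct expansion of $\phi_k\bigl((mx+1)^d\bigr)$ as above. Either way, the Stirling-number positivity is the only real content.
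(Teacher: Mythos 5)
Your proposal is correct, and your second, ``alternative'' route---inducting on $d$, peeling off a factor of $x$ via the recursion~\eqref{eqn: recursion}, and handling $j=0$ by a direct Stirling-number expansion---is essentially the paper's own proof. Your primary route (expanding $T_k\bigl(x^j(mx+1)^{d-j}\bigr)$ directly for all $j$ and showing the coefficients equal $\binom{d-j}{\ell}m^\ell\sum_{r=0}^j\binom{j}{r}S(r+\ell,k)\geq 0$) is the same computation the paper carries out in the proof of Lemma~\ref{lem: cones}, merely specialized with the extra $m^\ell$ factor; it buys you a self-contained, recursion-free argument at the cost of a slightly longer display. Both are sound, and the isolated $k=0$ case you dispose of separately is in fact already covered by the general formula (with $S(s,0)=[s=0]$), so that step is harmless but unnecessary.
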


\begin{proof}
Since $\phi_k$ is a linear operator satisfying the recursion~\eqref{eqn: recursion}, then it suffices to prove that $\phi_k((mt+1)^n)\in\B_{d,m}$ for any $n\leq d$ and any $k$.  
However, this fact follows from the observation that
\begin{align*}
T_k((mx+1)^n) 
&= \sum_{i=0}^k(-1)^{k-i}{k\choose i}(m(i+1)x+1)^n,	\\
&= \sum_{i=0}^k(-1)^{k-i}{k\choose i}((mix)+(mx+1))^n,	\\
&= \sum_{i=0}^k(-1)^{k-i}{k\choose i}\sum_{\ell=0}^n{n\choose\ell}(mi)^\ell x^\ell(mx+1)^{n-\ell},	\\
&= \sum_{\ell=0}^n{n\choose\ell}m^\ell\left(\sum_{i=0}^k(-1)^{k-i}{k\choose i}i^\ell\right)x^\ell(mx+1)^{n-\ell},\\
&= k!\sum_{\ell=0}^n{n\choose\ell}m^\ell S(\ell,k)x^\ell(mx+1)^{n-\ell}.
\end{align*}
This completes the proof.
\end{proof}

\begin{theorem}
\label{thm: centrally symmetric zonotopes}
Let $\Zone$ be a $d$-dimensional centrally symmetric lattice zonotope and let $\varphi$ be a combinatorially-positive valuation.  
Then 
\[
\I_d\left(h^\varphi(\Zone;x)\right)\prec h^\varphi(\Zone;x).
\]  
In particular, $h^\varphi(\Zone;x)$ is alternatingly increasing. 
\end{theorem}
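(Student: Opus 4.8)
The plan is to reduce the statement to Theorem~\ref{thm: colored eulerian polynomials} by showing that $h^\varphi(\Zone;x)$ is a nonnegative linear combination of the polynomials $A_{2,k}^d(x)$ from equation~\eqref{eqn: partial eulerian}. The mechanism is that central symmetry introduces a dilation by $2$. Writing the centrally symmetric zonotope as $\Zone=\sum_{i=1}^m[-u^{(i)},u^{(i)}]$ with integer vectors $u^{(i)}$ and using $[-1,1]=2[0,1]-1$ coordinatewise, we obtain $\Zone=2\Zone'-v$, where $\Zone':=\sum_{i=1}^m[0,u^{(i)}]$ is an ordinary lattice zonotope (of dimension $d$, since $\Zone$ is) and $v:=u^{(1)}+\cdots+u^{(m)}$ is an integer vector. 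Since $m\Zone=2m\Zone'-mv$ and $mv$ is integral, this gives $i(\Zone;m)=|2m\Zone'\cap\Z^n|=i(\Zone';2m)$ for all $m\geq0$, hence the polynomial identity $i(\Zone;x)=i(\Zone';2x)$.

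First I would apply Theorem~\ref{thm: aaron's theorem} to $\Zone'$: letting $\Lambda$ denote the associated Lawrence polytope,
\[
i(\Zone';x)=\sum_{k=0}^d h_k^\ast(\Lambda)\,x^k(x+1)^{d-k},
\]
with each $h_k^\ast(\Lambda)\geq0$ \cite{S80}; as discussed after Theorem~\ref{thm: aaron's theorem}, the spanning hypothesis of the original statement is not needed here. Substituting $x\mapsto 2x$ gives $i(\Zone;x)=\sum_{k=0}^d h_k^\ast(\Lambda)(2x)^k(2x+1)^{d-k}=\sum_{k=0}^d 2^k h_k^\ast(\Lambda)\,x^k(2x+1)^{d-k}$, so $i(\Zone;x)\in\B_{d,2}$ in the notation of Lemma~\ref{lem: Tk map for B polynomials}.

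Next I would transport this membership through the valuation. By equation~\eqref{eqn: combinatorially-positive} there exist $\alpha_0,\ldots,\alpha_d\geq0$ with $i^\varphi(\Zone;x)=\sum_{k=0}^d\alpha_k T_k(i(\Zone;x))=\sum_{k=0}^d\alpha_k\,k!\,\phi_k(i(\Zone;x))$. Since $i(\Zone;x)\in\B_{d,2}$, $\phi_k(\B_{d,2})\subseteq\B_{d,2}$ by Lemma~\ref{lem: Tk map for B polynomials}, and $\B_{d,2}$ is a cone, we get $i^\varphi(\Zone;x)\in\B_{d,2}$; that is, $i^\varphi(\Zone;m)=\sum_{k=0}^d\gamma_k(2m)^k(2m+1)^{d-k}$ for all $m\geq0$ and some $\gamma_0,\ldots,\gamma_d\geq0$. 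Summing the corresponding generating function and comparing with equation~\eqref{eqn: partial eulerian},
\[
\sum_{m\geq0}i^\varphi(\Zone;m)x^m=\sum_{k=0}^d\gamma_k\frac{A_{2,k}^d(x)}{(1-x)^{d+1}},\qquad\text{and hence}\qquad h^\varphi(\Zone;x)=\sum_{k=0}^d\gamma_k A_{2,k}^d(x).
\]
This is precisely a polynomial of the form treated in Theorem~\ref{thm: colored eulerian polynomials} (take $n=d$, $c_{2,k}=\gamma_k$, and $c_{r,k}=0$ for $r\geq3$), so $\I_d(h^\varphi(\Zone;x))\prec h^\varphi(\Zone;x)$; the alternatingly increasing property of $h^\varphi(\Zone;x)$ then follows from Remark~\ref{rmk: alternatingly increasing connection}, equivalently Theorem~\ref{thm: interlacing TFAE}.

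Everything past the first paragraph is routine bookkeeping. The step I expect to require the most care is the initial reduction: realizing a centrally symmetric lattice zonotope as twice an ordinary lattice zonotope up to a lattice translation, so that Theorem~\ref{thm: aaron's theorem} becomes applicable and the ``$r=2$'' family of Section~\ref{sec: colored} appears. Lemma~\ref{lem: Tk map for B polynomials} is exactly the tool needed to carry $\B_{d,2}$-membership from the Ehrhart polynomial to $i^\varphi(\Zone;x)$; note that without the dilation one would only obtain membership in $\B_{d,1}$, which feeds Theorem~\ref{thm: real-rooted I-decomposition} but not the sharper interlacing conclusion coming from Theorem~\ref{thm: colored eulerian polynomials}. (Alternatively, $i(\Zone;x)\in\B_{d,2}$ can be read off directly from a half-open decomposition of $[-1,1]^m$ compatible with the projection onto $\Zone$, along the lines of \cite{BJM16}.)
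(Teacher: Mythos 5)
Your proof is correct, and it differs from the paper's at one specific point. Both arguments reduce to showing that $i(\Zone;x)$ lies in the cone $\B_{d,2}$; after that, equation~\eqref{eqn: combinatorially-positive} together with Lemma~\ref{lem: Tk map for B polynomials} transports this membership to $i^\varphi(\Zone;x)\in\B_{d,2}$, and Theorem~\ref{thm: colored eulerian polynomials} (via Theorem~\ref{thm: interlacing TFAE}) finishes. You establish $i(\Zone;x)\in\B_{d,2}$ by the observation that $\Zone=2\Zone'-v$ for an ordinary $d$-dimensional lattice zonotope $\Zone'$ and a lattice vector $v$, whence $i(\Zone;x)=i(\Zone';2x)$, and you then apply Dall's theorem (Theorem~\ref{thm: aaron's theorem}) to $\Zone'$ --- the same tool already used in the proof of Theorem~\ref{thm: zonotopes}. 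The paper instead cites \cite[Proposition 4.10 and Theorem 5.3]{BJM16}, which decompose the zonotope into half-open $(\pm1)$-parallelepipeds (cf.\ Theorem~\ref{thm: cubes}) and express $h^\ast(\Zone;x)$ as a nonnegative combination of the Type-B Eulerian polynomials $B_{k+1}^{d+1}=A_{2,k}^d$, recovering $i(\Zone;x)\in\B_{d,2}$ from that. Your reduction is more self-contained (it reuses Dall's theorem rather than importing the decomposition results of \cite{BJM16}), while the paper's version keeps the Type-B permutation combinatorics in view. There is also a cosmetic difference at the end: you apply Theorem~\ref{thm: colored eulerian polynomials} directly to $h^\varphi(\Zone;x)=\sum_k\gamma_kA_{2,k}^d$, whereas the paper passes to the $f$-polynomial and the cone $\A_d$ before invoking Theorem~\ref{thm: interlacing TFAE}; the two are equivalent by Lemma~\ref{lem: relationship between I and S decompositions}.
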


\begin{proof}
Recall from equation~\eqref{eqn: colored E} that $E_{2,k}^d(x)=\E((2x)^k(2x+1)^{d-k})$ for $0\leq k\leq d$.
Equation~\eqref{eqn: type-B ehrhart} then implies that for $0\leq k\leq d$
\[
E_{2,k}^d(x) = (1+x)^dB_{k+1}^{d+1}\left(\frac{x}{1+x}\right).
\]
Thus, by Theorem~\ref{thm: colored eulerian polynomials}, we have that any polynomial $p$ of the form 
\[
p = \sum_{k=0}^dc_kB_{k+1}^{d+1}
\]
satisfies $\I_d(p)\prec p$.
By \cite[Proposition 4.10]{BJM16} together with \cite[Theorem 5.3]{BJM16} we know that for any $d$-dimensional centrally-symmetric lattice zonotope $\Zone$, the Ehrhart $h^\ast$-polynomial is expressible as 
\[
h^\ast(\Zone;x) = \sum_{k=0}^dc_kB_{k+1}^{d+1},
\]
for some $c_k\geq 0$.  
If $f^\ast(\Zone;x)$ is the $f$-polynomial of $h^\ast(\Zone;x)$ as defined in equation~\eqref{eqn: f-polynomial}, it then follows that $f^\ast(\Zone;x) = \sum_{k=0}^dc_kE_{2,k}^d\in\A_d$, and that $\Zone$ has Ehrhart polynomial
\begin{equation}
\label{eqn: ehrhart polynomial for Type-B}
i(\Zone;x) = \sum_{k=0}^dc_k(2x)^k(2x+1)^{d-k}.
\end{equation}
Given a combinatorially-positive valuation $\varphi$, let $f^\varphi(\Zone;x)$ denote the $f$-polynomial associated to $h^\varphi(\Zone;x)$ via equation~\eqref{eqn: f-polynomial}.  
Since equation~\eqref{eqn: ehrhart polynomial for Type-B} implies that the Ehrhart polynomial $i(\Zone;x)$ lies in the cone $\B_{d,2}$, then equation~\eqref{eqn: combinatorially-positive} together with Lemma~\ref{lem: Tk map for B polynomials} implies that $f^\varphi(\Zone;x)\in\A_d$ for any combinatorially-positive valuation $\varphi$.  
It follows from Theorem~\ref{thm: interlacing TFAE} that $\I_d\left(h^\varphi(\Zone;x)\right)\prec h^\varphi(\Zone;x)$.
\end{proof}

\section{Doubly Cohen-Macaulay Level Simplicial Complexes}
\label{sec: h-polynomials of cohen-macaulay simplicial complexes}
We now use the techniques presented in Section~\ref{sec: the real-rootedness properties of symmetric decompositions} to strengthen some known results \cite{BW08} on the real-rootedness of $h$-polynomials of barycentric subdivisions of Boolean complexes when the complex is also doubly Cohen-Macaulay and level.  
Given a CW-complex $\Delta$, we define a partial order on its open cells $\preceq_\Delta$ such that for two open cells $\sigma,\sigma^\prime\in\Delta$, we have $\sigma\preceq_\Delta\sigma^\prime$ if and only if $\sigma$ is contained in the closure of $\sigma^\prime$ in $\Delta$.  
When $\Delta$ is regular, the partial order $\preceq_\Delta$ completely describes the topology of $\Delta$ up to homeomorphism.  
Let $P_\Delta$ denote the partially ordered set on the cells of $\Delta$ given by $\preceq_\Delta$.  
A regular CW-complex is called a \emph{Boolean complex} if for any $A\in\Delta$ the interval $[\emptyset,A]$ in $P_\Delta$ is a Boolean lattice; i.e., the lattice of subsets of a set.  
Notice that any simplicial complex is an example of a Boolean complex.  
For a Boolean complex $\Delta$, a cell $\sigma\in\Delta$ is called a \emph{face} of $\Delta$, and $\dim(\sigma)$ will denote its dimension.  
The dimension of $\Delta$, denoted $\dim(\Delta)$, is the maximum dimension of any face $\sigma\in\Delta$.  
Suppose that $\Delta$ is a Boolean complex with $d-1:=\dim(\Delta)$. 
For $-1\leq i\leq d-1$, let $f_k(\Delta)$ denote the number of faces of $\Delta$ of dimension $k$.  
Here, we assume that the dimension of $\emptyset$ is equal to $-1$.  
The polynomial 
\[
f(\Delta;x) = \sum_{i=-1}^{d-1}f_i(\Delta)x^{i+1},
\]
is called the $f$-\emph{polynomial} of $\Delta$.
We further let $h(\Delta;x) = \sum_{i=0}^dh_i(\Delta)x^i$ denote the $h$-polynomial of $\Delta$ as defined by the relationship in equation~\eqref{eqn: f-polynomial}; i.e.,
\[
f(\Delta;x) = (1+x)^dh\left(\Delta;\frac{x}{1+x}\right).
\]
The \emph{barycentric subdivision} of a Boolean cell complex $\Delta$ is the simplicial complex $\sd(\Delta)$ that is abstractly defined to have $i$-dimensional faces given by the strictly increasing flags
\[
\sigma_0\prec_\Delta\sigma_1\prec_\Delta\cdots\prec_\Delta\sigma_i,
\]
of nonempty faces $\sigma_0,\ldots,\sigma_i\in\Delta\backslash\{\emptyset\}$.  
In \cite{BW08}, the authors proved that if $\Delta$ is a Boolean cell complex with $h_i(\Delta)\geq0$ for all $0\leq i\leq d$, then $h(\sd(\Delta);x)$ is a real-rooted polynomial.  
A well-studied family of Boolean cell complexes known to satisfy the nonnegativity conditions $h_i(\Delta)\geq0$ for all $0\leq i\leq d$ are the Cohen-Macaulay simplicial complexes.  

Let $\Delta$ be a Cohen-Macaulay simplicial complex of dimension $d-1$ with $h$-polynomial 
\[
h(\Delta;x) = h_0(\Delta)+h_1(\Delta)x+\cdots+h_s(\Delta)x^s,
\]
such that $h_s(\Delta)\neq0$ and $h_{s+1}(\Delta) = \cdots = h_d(\Delta) = 0$.  
We say $\Delta$ is \emph{level} if the canonical module of its face ring is generated by $h_s$ elements (see \cite{S07} for all unfamiliar definitions).  
Examples of level simplicial complexes include the Gorenstein complexes, which correspond to the case where $h_s(\Delta) = 1$.  
It is well-known that if $\Delta$ is Gorenstein then $h(\Delta;x)$ is symmetric with respect to degree $s$.  
More generally, level complexes have generically Gorenstein face rings, which implies that 
\begin{equation}
\label{eqn: generically gorenstein}
h_0(\Delta)+h_1(\Delta)+\cdots+h_i(\Delta)\leq h_s(\Delta)+h_{s-1}(\Delta)+\cdots+h_{s-i}(\Delta),
\end{equation}
for all $0\leq i\leq s$ \cite[Proposition 3.3]{S07}.  
A Cohen-Macaulay simplicial complex $\Delta$ is called \emph{doubly Cohen-Macaulay} (or $2$-\emph{Cohen-Macaulay}) if for every vertex $v\in\Delta$ the subcomplex $\Delta\setminus v:=\{F\in\Delta : v\notin F\}$ is Cohen-Macaulay of the same dimension.  
It is noted in \cite[Chapter III.3]{S07} that a $(d-1)$-dimensional level simplicial complex $\Delta$ is doubly Cohen-Macaulay if and only if $h(\Delta;x)$ has degree $d$.  
In this case, the inequalities~\eqref{eqn: generically gorenstein} coincide with those in~\eqref{eqn: hibi}.  
\begin{theorem}
\label{thm: cohen-macaulay complexes}
Let $\Delta$ be a doubly Cohen-Macaulay level simplicial complex.  
Then $h(\sd(\Delta);x)$ has a real-rooted $\I$-decomposition and, therefore, is alternatingly increasing.
\end{theorem}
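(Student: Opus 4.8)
The plan is to derive this directly from Theorem~\ref{thm: real-rooted I-decomposition}, so the first task is to realize $h(\sd(\Delta);x)$ as the $h$-polynomial, in the sense of \eqref{eqn:1}, of a polynomial of the exact shape required there. Write $d-1=\dim(\Delta)$ and let $f(\Delta;x)=\sum_{j=0}^{d}h_j(\Delta)\,x^j(x+1)^{d-j}$ be the $f$-polynomial of $\Delta$. The claim is that
\[
\sum_{m\ge 0} f(\Delta;m)\,x^m \;=\; \frac{h(\sd(\Delta);x)}{(1-x)^{d+1}}.
\]
This is (equivalent to) the computation of Brenti and Welker \cite{BW08}, but it also drops out of the machinery of Section~\ref{sec: the real-rootedness properties of symmetric decompositions}: expanding $x^j$ in the binomial basis gives $\E(x^j)=\sum_i i!\,S(j,i)x^i$, while counting flags of nonempty faces gives $f_{i-1}(\sd(\Delta))=\sum_j f_{j-1}(\Delta)\,i!\,S(j,i)$, so that $f(\sd(\Delta);x)=\E\big(f(\Delta;x)\big)$; applying Lemma~\ref{lem: subdivision operator} with $i:=f(\Delta;x)$ (which has degree exactly $d$, since $\sum_j h_j(\Delta)=f_{d-1}(\Delta)>0$ for a pure complex) then yields the displayed identity. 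Thus $h(\sd(\Delta);x)$ is the $h$-polynomial attached to $i=\sum_{j=0}^d c_j x^j(x+1)^{d-j}$ with $c_j=h_j(\Delta)$.

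Now I check the hypotheses of Theorem~\ref{thm: real-rooted I-decomposition} for this $i$. Since $\Delta$ is Cohen-Macaulay, $c_j=h_j(\Delta)\ge 0$ for all $j$. For the inequalities \eqref{eqn: hibi}, recall that levelness of $\Delta$ gives the generically-Gorenstein inequalities \eqref{eqn: generically gorenstein}, and that $\Delta$ being additionally doubly Cohen-Macaulay forces $h(\Delta;x)$ to have degree $d$, i.e.\ $s=d$ in \eqref{eqn: generically gorenstein}; as noted just before the statement, in this case \eqref{eqn: generically gorenstein} is exactly \eqref{eqn: hibi}, namely $h_0(\Delta)+\cdots+h_j(\Delta)\le h_d(\Delta)+\cdots+h_{d-j}(\Delta)$ for $0\le j\le d/2$. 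Theorem~\ref{thm: real-rooted I-decomposition} then gives that the $\I_d$-decomposition $(a,b)$ of $h(\sd(\Delta);x)$ is real-rooted.

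Finally I upgrade this to the alternatingly increasing property. By the characterization recalled in the introduction it suffices to know, in addition to real-rootedness, that $a$ and $b$ have nonnegative coefficients, for then both are real-rooted with nonnegative coefficients, hence unimodal. This nonnegativity can be extracted from the proof of Theorem~\ref{thm: real-rooted I-decomposition}: by Lemma~\ref{lem: relationship between I and S decompositions} the $f$-polynomials of $a$ and $b$ are the polynomials $\tilde a,\tilde b$ in the $\RR$-decomposition of $\E(i)$, and that proof shows $\tilde a\prec (x+1)E_d^d$ (Lemma~\ref{lem: real-rooted a polynomial}) and $E_0^{d-1}\prec\tilde b\prec E_{d-1}^{d-1}$ (Lemma~\ref{lem: pair sums of E_i^d}); since $(x+1)E_d^d$, $E_0^{d-1}$, and $E_{d-1}^{d-1}$ have all their zeros in $[-1,0]$, interlacing confines the zeros of $\tilde a$ and $\tilde b$ to $[-1,0]$ as well, and transforming back through \eqref{eqn: f-polynomial} shows $a$ and $b$ have nonnegative coefficients. (Alternatively, one may use that $\sd(\Delta)$ is again doubly Cohen-Macaulay and level.)

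I expect the conceptual content to be concentrated entirely in the first step — matching $h(\sd(\Delta);x)$ with the template of Theorem~\ref{thm: real-rooted I-decomposition} via the Brenti-Welker identity $f(\sd(\Delta);x)=\E(f(\Delta;x))$ — after which everything is bookkeeping with facts already assembled in the excerpt. The one place that needs care in the write-up is making the last paragraph airtight: ensuring that the passage from ``$\tilde a$ has all zeros in $[-1,0]$'' to ``$a$ has nonnegative coefficients'' is legitimate (this requires $\deg\tilde a=d$, equivalently $(x-1)\nmid a$), or else routing the nonnegativity of $a$ and $b$ through structural properties of the doubly Cohen-Macaulay level complex $\sd(\Delta)$.
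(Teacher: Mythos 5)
Your proof follows the paper's own argument in its essentials: write $h(\sd(\Delta);x)$ as the $h$-polynomial (in the sense of \eqref{eqn:1}) attached to $i(x)=\sum_{k=0}^d h_k(\Delta)x^k(x+1)^{d-k}$ via the Brenti--Welker identity, observe that Cohen--Macaulayness gives nonnegativity of the $h_k(\Delta)$ and that levelness plus double Cohen--Macaulayness gives precisely the partial-sum inequalities \eqref{eqn: hibi}, and then invoke Theorem~\ref{thm: real-rooted I-decomposition}. That is exactly what the paper does. You include, in addition, a self-contained derivation of the Brenti--Welker identity via $f(\sd(\Delta);x)=\E(f(\Delta;x))$ and Lemma~\ref{lem: subdivision operator}; the paper simply cites \cite{BW08}. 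Both are fine.

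Where you diverge, productively, is in the final paragraph. The paper jumps from ``the $\I_d$-decomposition is real-rooted'' to ``therefore $h(\sd(\Delta);x)$ is alternatingly increasing'' without comment, even though (as recalled in the introduction) the alternatingly increasing property is equivalent to $a$ and $b$ being unimodal \emph{with nonnegative coefficients}, and Theorem~\ref{thm: real-rooted I-decomposition} as stated only delivers real-rootedness. You are right to flag this and to extract the missing nonnegativity from the proof of Theorem~\ref{thm: real-rooted I-decomposition} rather than from its statement. Your sketch can be closed cleanly: since $a(0)=h(\sd(\Delta);0)=1\neq 0$ we have $\tilde a\not\equiv 0$, and $\tilde a\prec (x+1)E_d^d$ with $\deg\tilde a\leq d$ and $\deg((x+1)E_d^d)=d+1$ forces $\deg\tilde a=d$; interlacing with a polynomial all of whose zeros lie in $[-1,0]$ then confines the zeros of $\tilde a$ to $[-1,0]$, and pulling back through \eqref{eqn: f-polynomial} (where zeros of $\tilde a$ at $-1$ account for any degree drop of $a$) gives $a$ nonpositive zeros and hence nonnegative coefficients. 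The argument for $\tilde b$ is the same using the two-sided bound $E_0^{d-1}\prec\tilde b\prec E_{d-1}^{d-1}$ obtained in the proof of Theorem~\ref{thm: real-rooted I-decomposition} from Lemma~\ref{lem: pair sums of E_i^d} and Proposition~\ref{cones}. So the worry you raise about $\deg\tilde a$ resolves itself, and your proof is a slightly more careful version of the paper's.
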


\begin{proof}
In \cite[Equation 3.5]{BW08} it is shown that for a $(d-1)$-dimensional Boolean cell complex 
\[
\sum_{m\geq0}\left(\sum_{k=0}^dh_k(\Delta)m^k(m+1)^{d-k}\right)x^m = \frac{h(\sd(\Delta);x)}{(1-x)^{d+1}}.
\]
It follows from the above discussion that if $\Delta$ is further a doubly Cohen-Macaulay level simplicial complex then 
\[
h_0(\Delta)+\cdots+h_i(\Delta)\leq h_d(\Delta)+\cdots+h_{d-i}(\Delta),
\]
for all $0\leq i\leq d$.  
Thus, by Theorem~\ref{thm: real-rooted I-decomposition}, we conclude that $h(\sd(\Delta);x)$ has a real-rooted $\I$-decomposition, and therefore is alternatingly increasing.  
\end{proof}

\begin{remark}
\label{rmk: more general complexes}
More generally, if $\Delta$ is a Boolean complex whose $h$-polynomial has nonnegative coefficients satisfying the inequalities~\eqref{eqn: hibi} of Theorem~\ref{thm: real-rooted I-decomposition} then the $h$-polynomial of its barycentric subdivision will have a real-rooted $\I$-decomposition and consequently be alternatingly increasing.  
It would therefore be interesting to know what Boolean cell complexes satisfy the inequalities~\eqref{eqn: hibi}.  
\end{remark}

We also note that the family of doubly Cohen-Macaulay level simplicial complexes contains well-studied families of simplicial complexes arising in combinatorics. 
Given a matroid $M = ([n];\mathfrak{I})$ of rank $d$ with ground set $[n]$ and independent sets $\mathfrak{I}$, the \emph{matroid complex} $\Delta(M)$ is the simplicial complex whose $i$-dimensional faces correspond to the independent subsets of $M$ with cardinality $i+1$.
In \cite[Theorem 3.4]{S07} it is noted that a matroid complex is always level, and in particular, they satisfy the inequalities needed in Theorem~\ref{thm: real-rooted I-decomposition}.  
Let $\mathcal{B}$ denote the collection of all bases of $M$.  
An element $i\in[n]$ is said to be a \emph{coloop} of $M$ if $i\in B$ for all $B\in\mathcal{B}$.  
The matroid $M$ is called \emph{coloop-free} if it contains no coloop.  
A matroid complex $\Delta(M)$ is doubly Cohen-Macaulay if and only if $M$ is coloop-free \cite{S07}. 
In particular, we have the following corollary to Theorem~\ref{thm: cohen-macaulay complexes}.
\begin{corollary}
\label{cor: coloop-free}
If $M$ is a coloop-free matroid then $h(\sd(\Delta(M));x)$ has a real-rooted $\I$-decomposition.
In particular, $h(\sd(\Delta(M));x)$ is alternatingly increasing.
\end{corollary}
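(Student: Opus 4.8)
The plan is to recognize Corollary~\ref{cor: coloop-free} as an immediate specialization of Theorem~\ref{thm: cohen-macaulay complexes} once we verify that the matroid complex $\Delta(M)$ of a coloop-free matroid $M$ is a doubly Cohen-Macaulay level simplicial complex. Thus the proof reduces to assembling three standard facts about matroid complexes, each already cited in the surrounding text, and then invoking the theorem.

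First I would recall that every matroid complex $\Delta(M)$ is Cohen-Macaulay; this is classical and in fact $\Delta(M)$ is shellable. Second, I would cite \cite[Theorem 3.4]{S07}, already referenced in the excerpt, to conclude that $\Delta(M)$ is level, so that the inequalities~\eqref{eqn: generically gorenstein} hold. Third, I would invoke the characterization, also stated in the excerpt and attributed to \cite{S07}, that $\Delta(M)$ is doubly Cohen-Macaulay precisely when $M$ is coloop-free. Combining these, if $M$ is coloop-free then $\Delta(M)$ is a doubly Cohen-Macaulay level simplicial complex, and Theorem~\ref{thm: cohen-macaulay complexes} applies directly to give that $h(\sd(\Delta(M));x)$ has a real-rooted $\I$-decomposition and hence is alternatingly increasing.

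There is essentially no obstacle here: the content of the corollary lies entirely in Theorem~\ref{thm: cohen-macaulay complexes} (and, upstream, in Theorem~\ref{thm: real-rooted I-decomposition} and the subdivision-operator machinery of Section~\ref{sec: the real-rootedness properties of symmetric decompositions}), and the only thing to check is the membership of $\Delta(M)$ in the class of complexes to which that theorem applies. The one point worth being careful about is the equivalence ``$\Delta(M)$ doubly Cohen-Macaulay $\iff$ $M$ coloop-free'': a coloop $i$ would make $\Delta(M)$ a cone over $i$, so removing the vertex $i$ drops the dimension, violating the doubly Cohen-Macaulay condition; conversely, when there are no coloops one checks that $\Delta(M)\setminus v$ is again a matroid complex of full dimension (the deletion matroid $M\setminus v$ has the same rank), hence Cohen-Macaulay. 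Since this equivalence is exactly the statement quoted from \cite{S07} just before the corollary, I would simply cite it rather than reprove it. The proof is therefore a two-line deduction.

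\begin{proof}
By \cite[Theorem 3.4]{S07}, the matroid complex $\Delta(M)$ is level, and since $M$ is coloop-free, $\Delta(M)$ is doubly Cohen-Macaulay \cite{S07}.  Thus $\Delta(M)$ is a doubly Cohen-Macaulay level simplicial complex, and the claim follows from Theorem~\ref{thm: cohen-macaulay complexes}.
\end{proof}
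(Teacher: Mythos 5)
Your proposal is correct and matches the paper's own (implicit) argument exactly: the paper states in the paragraph preceding the corollary that matroid complexes are level by \cite[Theorem 3.4]{S07} and that $\Delta(M)$ is doubly Cohen-Macaulay if and only if $M$ is coloop-free, and then derives the corollary directly from Theorem~\ref{thm: cohen-macaulay complexes}. Your two-line deduction is precisely the intended proof.
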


\smallskip

\noindent
{\bf Acknowledgements}.
P. Br\"and\'en is a Wallenberg Academy Fellow supported by the Knut and Alice Wallenberg Foundation, and Vetenskapsr\aa det. 
L. Solus is supported by a United States NSF Mathematical Sciences Postdoctoral Research Fellowship (DMS - 1606407).

\end{document}